\newtheorem{theorem}{Theorem}[section]
\newtheorem*{theorem*}{Theorem}
\newtheorem{thmx}{Theorem}
\newtheorem{corollary}[theorem]{Corollary}
\newtheorem{lemma}[theorem]{Lemma}
\newtheorem{proposition}[theorem]{Proposition}
\theoremstyle{definition}
\newtheorem{definition}[theorem]{Definition}
\newtheorem{question}[theorem]{Question}
\newtheorem{claim}{Claim}[theorem]
\theoremstyle{remark}
\newtheorem*{claimproof}{Proof}
\newcommand{\R}{\mathbb{R}}
\newcommand{\N}{\mathbb{N}}
\newcommand{\claimend}{{\hfill $\blacksquare$}}
\renewcommand{\restriction}{\mathord{\upharpoonright}}
\title{Borel Measurable Hahn-Mazurkiewicz Theorem} 
\author{Jan Dud\'ak\footnote{https://orcid.org/0000-0003-0627-6641}
\footnote{Supported by the grant SVV-2020-260583}\\
Department of Mathematical Analysis\\ 
Faculty of Mathematics and Physics, Charles University\\
Prague, Czechia\\
E-mail: dudakjan@seznam.cz
\and 
Benjamin Vejnar\footnote{https://orcid.org/0000-0002-2833-5385} \footnote{
Supported by the grant SVV-2020-260583
}\\
Department of Mathematical Analysis\\ 
Faculty of Mathematics and Physics, Charles University\\
Prague, Czechia\\
E-mail: vejnar@karlin.mff.cuni.cz}
\begin{document}
\maketitle

\begin{abstract}
It is well known due to Hahn and Mazurkiewicz that every Peano continuum is a continuous image of the unit interval.
We prove that an assignment, which takes as an input a Peano continuum and produces as an output a continuous mapping whose range is the Peano continuum, can be realized in a Borel measurable way. Similarly, we find a Borel measurable assignment which takes any nonempty compact metric space and assigns a continuous mapping from the Cantor set onto that space. To this end we use the Burgess selection theorem. Finally, a Borel measurable way of assigning an arc joining two selected points in a Peano continuum is found.
\end{abstract}


\section{Introduction}

A lot of results in mathematics are of the form $\forall a\in A \, \exists b\in B: T(a, b)$. In many cases the sets $A$ and $B$ can be equipped with natural topologies or standard Borel structures. Then it makes sense to ask whether there is a continuous or Borel measurable mapping $b:A \to B$ satisfying $T(a, b(a))$ for every $a\in A$. A natural way to prove this kind of result is to use a suitable selection (resp. uniformization) theorem applied to the set $\{(a, b); \, T(a,b)\}$. Some of the most useful selection theorems for this matter include results by Kuratowski and Ryll-Nardzewski, Kunugui and Novikov, or Arsenin and Kunugui \cite[theorems 12.13, 28.7, 35.46]{Kechris}. However, it may happen that none of the above selection theorems can be directly applicable.

Dugunji extension theorem provides a concrete example of the idea expressed in the first paragraph: fixing a metric space $Y$ and its closed subspace $X$, it is well known that every bounded continuous mapping $f:X\to\mathbb R$ can be extended to a bounded continuous mapping $F: Y\to\mathbb R$ by the Tietze extension theorem. Dugundji proved that this assignment $f\in C^*(X)\mapsto F\in C^*(Y)$ can be realized in a continuous way with respect to the uniform topology (hence it is Borel measurable) and linear at the same time (see \cite{Dugundji} or \cite[Theorem 6.6.2 and Remark 6.6.3]{vanMill}).

The main results of this paper provide Borel measurable ways to the following classical results:
(a) every nonempty compact metrizable space is a continuous image of the Cantor set,
(b) every Peano continuum is a continuous image of $[0,1]$,
(c) every two distinct points in a Peano continuum are end-points of an arc.

\begin{thmx}[with details in Theorem \ref{CantorImagesThm}]
There is a Borel measurable way of assigning to every nonempty compact metrizable space $K$ a continuous surjective mapping $f: \mathcal{C} \to K$, where $\mathcal{C}$ is the Cantor space.
\end{thmx}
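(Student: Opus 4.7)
My plan is to construct the assignment explicitly by reducing to a uniform retraction of the Cantor set onto its nonempty closed subsets. I identify the inputs of the theorem with elements of $\mathcal{K}(Q)\setminus\{\emptyset\}$, where $\mathcal{K}(Q)$ is the Polish hyperspace (under Hausdorff distance) of compact subsets of the Hilbert cube $Q=[0,1]^{\mathbb{N}}$, and aim to build a Borel map $K\mapsto f_K$ into $C(\mathcal{C},Q)$ whose value at $K$ is a continuous surjection $\mathcal{C}\twoheadrightarrow K$. First I fix a continuous surjection $\pi:\mathcal{C}\to Q$, say the coordinatewise composition of the standard binary-expansion surjection $\{0,1\}^{\mathbb{N}}\to[0,1]$ with a homeomorphism $\mathcal{C}\cong\mathcal{C}^{\mathbb{N}}$, and set $F_K:=\pi^{-1}(K)$. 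A routine check on the Vietoris subbasis shows $K\mapsto F_K$ is Borel from $\mathcal{K}(Q)\setminus\{\emptyset\}$ to $\mathcal{K}(\mathcal{C})\setminus\{\emptyset\}$ (though only upper-semicontinuous, not continuous in general). It therefore suffices to produce, continuously in $F$, a retraction $r_F:\mathcal{C}\to F$ for each nonempty closed $F\subseteq\mathcal{C}$: then $f_K:=\pi\circ r_{F_K}$ is continuous with image $\pi(F_K)=K$, and depends on $K$ in a Borel way.

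Identifying $\mathcal{C}$ with $2^{\mathbb{N}}$ and writing $[s]$ for the basic clopen set determined by a finite binary string $s$, I would define $r_F$ by the greedy rule ``follow $c$ as long as possible, deviating only when the current branch would leave $F$.'' Formally, for $c\in\mathcal{C}$ and having inductively chosen $b_1,\dots,b_n$ with $F\cap[b_1\dots b_n]\neq\emptyset$, set
\[
 b_{n+1}:=\begin{cases} c_{n+1} & \text{if } F\cap[b_1\dots b_n c_{n+1}]\neq\emptyset,\\ 1-c_{n+1} & \text{otherwise,}\end{cases}
\]
and let $r_F(c):=(b_n)_{n\ge 1}$. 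Since $F$ is closed and the intersection of the nested nonempty compacta $F\cap[b_1\dots b_n]$ equals $F\cap\{r_F(c)\}$, we have $r_F(c)\in F$; moreover $r_F|_F=\mathrm{id}_F$ because the first branch of the alternative is always taken when $c\in F$.

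The main thing to verify — the step I expect to require the most care — is joint continuity of $(F,c)\mapsto r_F(c)$ on $(\mathcal{K}(\mathcal{C})\setminus\{\emptyset\})\times\mathcal{C}$. It reduces to the observation that for every finite string $s$ the condition ``$F\cap[s]\neq\emptyset$'' cuts out a clopen subset of $\mathcal{K}(\mathcal{C})$ (because $[s]$ is clopen in $\mathcal{C}$), so each coordinate $r_F(c)_n$ is locally constant in $(F,c)$. Moreover, the first $n$ coordinates of $r_F(c)$ depend on $c$ only through $c|_n$, uniformly in $F$, so the family $\{r_F\}_F$ is equicontinuous; combined with the pointwise continuity just noted, this upgrades to continuity of $F\mapsto r_F$ into $C(\mathcal{C},\mathcal{C})$ under the sup metric. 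Composing with the continuous evaluation $r\mapsto\pi\circ r:C(\mathcal{C},\mathcal{C})\to C(\mathcal{C},Q)$ and with the Borel map $K\mapsto F_K$ then yields the required Borel assignment $K\mapsto f_K$ satisfying $f_K(\mathcal{C})=K$.
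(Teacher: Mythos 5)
Your construction is correct, and it takes a genuinely different route from the paper. The paper also starts from a fixed continuous surjection $\varphi\colon\mathcal{C}\to Q$ and the Borel map $K\mapsto\varphi^{-1}(K)$, but it arranges (via $\psi(\alpha,\beta)=\psi(\alpha)$ on $\mathcal{C}\times\mathcal{C}$) that every preimage $\varphi^{-1}(K)$ is itself a Cantor set, and then invokes the Burgess selection theorem for the orbit equivalence relation of $\mathrm{Homeo}(\mathcal{C})$ acting on $\mathcal{E}(\mathcal{C},\mathcal{C})$ by precomposition, obtaining a Borel transversal that selects, for each Cantor subset $L\subseteq\mathcal{C}$, a canonical embedding of $\mathcal{C}$ onto $L$; composing with $\varphi$ finishes the argument. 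You instead bypass all selection machinery by exhibiting an explicit ``greedy'' retraction $r_F\colon\mathcal{C}\to F$ for every nonempty closed $F\subseteq\mathcal{C}$, and your key verification --- that each coordinate of $r_F(c)$ is locally constant in $(F,c)$ because $\big\lbrace F;\ F\cap[s]\neq\emptyset\big\rbrace$ is clopen in $\mathcal{K}(\mathcal{C})$, together with the uniform dependence of $r_F(c)\restriction n$ on $c\restriction n$ --- correctly yields continuity of $F\mapsto r_F$ into $C(\mathcal{C},\mathcal{C})$. (The remaining pieces are fine: $F\cap[b_1\dots b_n]$ stays nonempty by induction, so $r_F(c)\in F$; $r_F$ fixes $F$ pointwise; and $K\mapsto\pi^{-1}(K)$ is Borel, e.g.\ by Lemma \ref{CompactMapstoPreimageIsBorel}.) What each approach buys: yours is elementary and essentially constructive --- the only non-continuous step is the upper semicontinuous preimage map, so you even get a canonical choice rather than one mediated by an abstract transversal; the paper's route is less explicit but illustrates a reusable general method (Burgess selection for countably separated orbit equivalence relations) and produces along the way a Borel parametrization of Cantor subsets by embeddings, which is of independent interest. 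Both are valid proofs of Theorem \ref{CantorImagesThm}.
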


\begin{thmx}[with details in Theorem \ref{MainTheorem}]
There is a Borel measurable way of assigning to every Peano continuum $K$ a surjective continuous mapping $f: [0,1]\to K$.
\end{thmx}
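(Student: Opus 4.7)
The plan is to start from a Borel-measurable Cantor-set surjection provided by Theorem~A and extend it continuously across the gaps of $\mathcal{C}$ in $[0,1]$. Apply Theorem~A to obtain a Borel measurable assignment $K\mapsto\varphi_K$ with $\varphi_K\colon\mathcal{C}\to K$ a continuous surjection, where $\mathcal{C}$ is identified with the standard middle-thirds Cantor set in $[0,1]$. The complement $[0,1]\setminus\mathcal{C}$ is a countable disjoint union of open ``gaps'' $(a_n,b_n)$ with $a_n,b_n\in\mathcal{C}$. On each gap I would define the extension $f_K$ to be a continuous path $\gamma^K_n\colon[a_n,b_n]\to K$ from $\varphi_K(a_n)$ to $\varphi_K(b_n)$; the resulting $f_K$ is automatically continuous on each open gap and remains surjective since $\varphi_K$ is.

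To secure continuity at every point of $\mathcal{C}$, the diameters of $\gamma^K_n$ must tend to $0$ as $b_n-a_n\to 0$. This is where the Peano hypothesis enters: by Mazurkiewicz's theorem, every Peano continuum is uniformly locally arcwise connected, so for every $\varepsilon>0$ there is $\eta>0$ such that any two points of $K$ within distance $\eta$ are joined by a continuum (in fact an arc) of diameter less than $\varepsilon$. Combined with uniform continuity of $\varphi_K$ on the compact set $\mathcal{C}$, this allows one to bound the necessary path diameter in terms of the gap length, so that only finitely many gaps carry paths of diameter exceeding any prescribed $\varepsilon$, giving continuity of $f_K$ at every point of $\mathcal{C}$.

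The Borel measurability of the path selection is the main technical issue, and it is here that the Burgess selection theorem (flagged in the abstract) enters. One applies it to the multifunction assigning to each quadruple $(K,x,y,\delta)$ the collection of continuous maps $\gamma\colon[0,1]\to K$ with $\gamma(0)=x$, $\gamma(1)=y$, and $\operatorname{diam}(\gamma([0,1]))\le\delta$, viewed inside a Polish space such as $C([0,1],X)$ for a suitable universal compact metric space $X$ into which every Peano continuum embeds. The graph of this multifunction is Borel, and its sections have the structure ($\sigma$-compactness, closedness) required to apply Burgess, yielding a Borel selector. To choose the diameter bound $\delta$ as a Borel function of $K$ and $n$, one also needs a Borel measurable version of the Mazurkiewicz modulus $\eta_K(\varepsilon)$, which can be extracted by quantifying over rationals on the Polish space of Peano continua.

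The hardest part will be orchestrating these ingredients so that the final assignment $K\mapsto f_K$ is genuinely Borel as a map into $C([0,1],X)$: one must verify joint Borel measurability in $K$, the gap index $n$, and the endpoints $\varphi_K(a_n),\varphi_K(b_n)$, while the diameter constraint is dictated by the separately constructed Borel modulus. Once this uniform selection is in place, the continuity of each $f_K$ follows from the Mazurkiewicz-plus-uniform-continuity estimate above, and Borel measurability of the overall assignment $K\mapsto f_K$ reduces to verifying that the pointwise evaluation $K\mapsto f_K(t)$ is Borel for every $t$, which is immediate from the construction.
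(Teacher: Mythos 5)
Your topological outline (extend a Cantor-set surjection across the gaps by small connecting paths, using uniform local connectedness of $K$ to force continuity at points of $\mathcal{C}$) is a correct classical route to Hahn--Mazurkiewicz, but the Borel-measurable selection of the paths $\gamma^K_n$ is exactly where the argument has a genuine gap, and this is not the route the paper takes. The Burgess selection theorem does not apply as you describe: it produces a Borel transversal for a countably separated orbit equivalence relation induced by a continuous Polish group action, and you have exhibited no such group action on the space of paths from $x$ to $y$ in $K$ of diameter at most $\delta$ (paths are not injective, so there is no analogue of the reparametrization action by homeomorphisms that makes the Cantor-set case work). The hypotheses you actually invoke --- a Borel graph with closed or $\sigma$-compact sections --- are those of Arsenin--Kunugui/Kunugui--Novikov, not Burgess, and they fail here anyway: the set of all paths from $x$ to $y$ in $K$ with small diameter is not $\sigma$-compact in $C(I,Q)$ (it is nowhere near equicontinuous), and restricting to paths with a prescribed modulus of continuity turns nonemptiness of the sections into a nonuniform problem in $K$. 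The paper explicitly records that no selection principle could be made to work for Theorem B. Moreover, what you need --- a Borel assignment of a connecting path or arc of controlled diameter to $(K,x,y)$ --- is essentially Theorem C of the paper, which is there deduced \emph{from} Theorem \ref{MainTheorem} (via Whitney maps and a minimality argument), so invoking it here would be circular unless you supply an independent proof.

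The paper avoids path selection altogether: it Borel-izes Nadler's chain argument, producing (Theorem \ref{PeanoContinuumCanBeCoveredBySmallPeanoContinua}, Lemma \ref{BorelSequenceOfSubChains}) nested weak $2^{-n}$-chains of small Peano continua covering $X$; the selections needed there are of \emph{finite tuples of continua}, which live in the $\sigma$-compact space $\mathcal{Z}$, so Arsenin--Kunugui does apply, and $\Phi(X)$ is then obtained as the map whose graph is $\bigcap_n \Phi_n(X)$. To rescue your approach you would have to either find a genuine group-action/transversal formulation for the relevant path spaces or replace the path selection by a selection in a $\sigma$-compact parameter space --- which is in effect what the chain construction accomplishes. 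The secondary issue of a Borel-in-$K$ modulus $\eta_K(\varepsilon)$ is real but fixable by the quantification over rationals you sketch; the path selection is the essential obstruction.
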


\begin{thmx}[with details in Theorem \ref{ArcsInPeanoContinua}]
There is a Borel measurable way of assigning to every Peano continuum $K$ and a pair of distinct points $x,y\in K$ an arc $A$ in $K$ with end-points $x$ and $y$. 
\end{thmx}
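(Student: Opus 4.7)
My plan is to bootstrap from Theorem B. First, invoke Theorem B to obtain, Borel-measurably in the Peano continuum $K\subseteq Q$ (here $Q$ denotes the Hilbert cube), a continuous surjection $f_K\colon[0,1]\to K$. Then, given distinct $x,y\in K$, I would Borel-measurably reparametrize $f_K$ to produce a path from $x$ to $y$: set $s=\min f_K^{-1}(x)$ and $t=\min f_K^{-1}(y)$; both are Borel in $(K,x,y)$ and distinct since $x\ne y$. If $s<t$, let $g_{K,x,y}$ be the affine reparametrization of $f_K\restriction_{[s,t]}$ onto $[0,1]$; if $s>t$, do the same with $f_K\restriction_{[t,s]}$ run in reverse. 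The result is a continuous path $g_{K,x,y}\colon[0,1]\to K$ from $x$ to $y$ depending Borel-measurably on $(K,x,y)$.

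The remaining task is to Borel-measurably extract an arc $A\subseteq g_{K,x,y}([0,1])$ with endpoints $x$ and $y$. To this end I would study the multifunction
\[
\Phi(K,x,y)=\{A\in K(Q): A\text{ is an arc in }K\text{ with endpoints }x\text{ and }y\}
\]
defined on the Borel set of admissible triples $(K,x,y)$. Its graph is Borel (the subset of arcs is Borel in $K(Q)$, while $A\subseteq K$ and the endpoint conditions are Borel), and each fiber is non-empty by the classical arc-connectedness of Peano continua. A Borel selector for $\Phi$ would finish the argument.

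The main obstacle I foresee is the construction of such a selector. A theta-graph with distinguished points $x,y$ admits two distinct arcs from $x$ to $y$ meeting only at their endpoints, so no canonical choice exists (in particular, the intersection of all subcontinua containing $x$ and $y$ need not be an arc), and a genuine selection argument is required. The natural route, parallel to Theorems A and B, is to apply the Burgess selection theorem: impose on $\{(K,x,y,A):A\in\Phi(K,x,y)\}$ the equivalence relation identifying tuples sharing the same $(K,x,y)$, and extract a Borel transversal, provided the equivalence classes (i.e., the sets $\Phi(K,x,y)$ viewed inside the ambient Polish space) satisfy the structural hypothesis of the theorem (typically, being $F_\sigma$).

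A more constructive alternative would be to extract the arc directly from the path $g_{K,x,y}$ by iteratively cutting loops: at each stage choose a pair $s<t$ with $g(s)=g(t)$ maximizing $t-s$, collapse $[s,t]$ to a single parameter, reparametrize, and iterate. Every step is Borel in the previous one, but verifying that the (potentially transfinite) procedure converges Borel-measurably to an injective path whose image is the desired arc is itself delicate, and is where I expect the bulk of the technical work to lie.
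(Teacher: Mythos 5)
Your reduction to Theorem B is exactly the paper's first move (Lemma \ref{TransformationOfCtsMappings} produces, Borel-measurably in $(X,x,y)$, a continuous $h\colon I\to X$ with $h(0)=x$, $h(1)=y$; the paper splices $f$ so that $h(I)=X$ rather than restricting to $[s,t]$, but either variant would do), and your diagnosis of the remaining difficulty---that the family of arcs from $x$ to $y$ has no canonical member, so some genuine choice must be made---is correct. The gap is that neither of your two proposed resolutions is actually carried out, and the first would not go through as stated: the Burgess theorem applies to countably separated orbit equivalence relations induced by continuous Polish group actions, and the relation ``same $(K,x,y)$'' on tuples $(K,x,y,A)$ is not exhibited as such an orbit relation; nor are the fibers $\Phi(K,x,y)$ $\sigma$-compact (the set of arcs is not closed in $\mathcal{K}(Q)$), so Arsenin--Kunugui is unavailable too. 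The authors remark that they could not make any selection principle work even for Theorem B, and the same obstruction persists here.

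The missing idea is the device that makes your second, ``loop-cutting'' route non-iterative. Writing $f:=T(X,x,y)$, the paper considers the family $\Omega(f)$ of all compact $K\subseteq I$ such that $f(\min K)=f(0)$, $f(\max K)=f(1)$, and $f(s)=f(t)$ for every gap $(s,t)$ of $K$ (pairs $s<t$ in $K$ with $(s,t)\cap K=\emptyset$); these are exactly the parameter sets for which every excised interval is a loop of $f$. Lemmata \ref{AssigningTheFamilyOfAdmissibleCompactaIsBorel1} and \ref{AssigningTheFamilyOfAdmissibleCompactaIsBorel2} show that $\Omega(f)$ is a nonempty \emph{compact} subset of $\mathcal{K}(I)$ depending Borel-measurably on $f$, and Lemma \ref{BorelChoiceOfMinimalElementInclusion} then selects, Borel-measurably, an inclusion-minimal $K\in\Omega(f)$ by minimizing a Whitney map over $\Omega(f)$---a single continuous minimization on a compact set, with no transfinite iteration and no selection theorem beyond picking a point in a compact set. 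Minimality forces any two parameters of $K$ with equal $f$-values to bound a gap, so collapsing the gaps yields a monotone map $g$ with $g(I)=f(K)$ and $g(0)=x\neq y=g(1)$, which is an arc by Lemma \ref{MonotoneImageOfIntervalIsArc}. Without this (or an equivalent) mechanism your argument stops precisely where you say the bulk of the work lies, so the proposal as written does not establish the theorem.
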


Theorem A is obtained by an application of a selection theorem by Burgess \cite{Burgess}. Surprisingly, we were not able to apply any selection principle to prove Theorem B. Thus we followed the proof of the Hahn-Mazurkiewicz theorem as written in \cite{Nadler} and we verified that all the steps can be carried out in a Borel measurable way. Theorem C is a nontrivial consequence of Theorem B.


Theorems A and B imply some consequences in the context of invariant descriptive set theory (see \cite{Gao}). Namely it follows that the space of all continuous mappings from $[0,1]$ (or the Cantor set) into the Hilbert cube with the topology of uniform convergence provides a new yet equivalent coding for the collection of all Peano continua (or compact metrizable spaces) when considered naturally as a subspace of the hyperspace of the Hilbert cube with the Vietoris topology. Details are included in Corollary \ref{CorMainTheorem}. This gives a parallel result to that of Gao for separable complete metric spaces which can be represented either as closed subspaces of the Urysohn space with the Effros Borel structure or as metrics on $\N$ with the topology of pointwise convergence \cite[Theorem 14.1.3]{Gao}.


It was proved independently by Moise \cite{Moise} and Bing \cite{Bing}, as an answer to a question by Menger \cite{Menger}, that every Peano continuum admits a convex metric. A natural question related to the main focus of this paper follows.

\begin{question}
Is it possible to assign a convex metric to every Peano continuum in a Borel way?
\end{question}





\section{Preliminaries}

In this section we introduce the notation, terminology, definitions and basic facts which will be used throughout this paper. By a natural number we mean a strictly positive integer. We denote the set of natural numbers by $\N$. In addition, we use the symbol ${\N}_0$ to denote the set of non-negative integers, i.e. ${\N}_0 = \N \cup \{ 0 \}$. Also, we denote by ${\R}^+$ the set of strictly positive real numbers.

A subset of a topological space $X$ is said to be Borel if it belongs to the smallest $\sigma$-algebra on $X$ containing every open subset of $X$. For any two topological spaces $X$ and $Y$, a mapping $f \colon X \to Y$ is said to be Borel measurable if $f^{-1} (U)$ is Borel in $X$ for every open subset $U$ of $Y$. A Polish space is a separable completely metrizable topological space. It is a well-known fact that a subspace $Y$ of a Polish space $X$ is Polish if and only if $Y$ is $G_{\delta}$ in $X$. A Polish group is a topological group which is also a Polish space. A continuum is a compact connected metrizable topological space. We do not consider the empty topological space to be connected. Therefore, in particular, every continuum is a nonempty space. A Peano continuum is a locally connected continuum.

We denote by $\mathcal{C}$ the Cantor space, i.e. the space $\{ 0,1 \}^{\N}$ equipped with the product topology. Recall that a topological space $X$ is homeomorphic to $\mathcal{C}$ if and only if $X$ is a nonempty, compact, metrizable, zero-dimensional space with no isolated points (this is a classical theorem due to Brouwer). We denote by $I$ the compact interval $[0,1]$. The Hilbert cube, denoted by $Q$, is the space $I^{\N}$ endowed with the product topology.

For a Polish space $X$, we denote by $\mathcal{K} (X)$ the space of all nonempty compact subsets of $X$ equipped with the Vietoris topology (and the Hausdorff metric). It is well-known that $\mathcal{K} (X)$ is a Polish space. Moreover, $\mathcal{K} (X)$ is compact if and only if $X$ is compact. Let us consider the following two subspaces of $\mathcal{K} (X)$: We denote by $\mathsf{C} (X)$ the space of all continua in $X$ and by $\mathsf{LC} (X)$ the space of all Peano continua in $X$. It is easy to see that $\mathsf{C} (X)$ is a closed subset of $\mathcal{K}(X)$. In particular, it is Borel. The set $\mathsf{LC} (X)$ is Borel in $\mathcal{K} (X)$ as well (see \cite{GvM} for reference).

Recall that every separable metrizable space is homeomorphic to a subspace of $Q$. In particular, $Q$ contains a homeomorphic copy of every metrizable compact space. So, in this sense, the space $\mathcal{K}(Q)$ represents the class of nonempty metrizable compact spaces. Similarly, the classes of continua and Peano continua are represented by $\mathsf{C} (Q)$ and $\mathsf{LC} (Q)$ respectively.

For any metrizable compact space $X$ and any Polish space $Y$, we denote by $C(X,Y)$ the set of all continuous mappings from $X$ to $Y$ and we equip $C(X,Y)$ with the topology of uniform convergence (equivalently, the compact-open topology). It is well-known that $C(X,Y)$ is a Polish space. We denote by $\mathcal{E}(X,Y)$ the subspace of $C(X,Y)$ consisting of injective mappings. Note that by the compactness of $X$, we have
\[ \mathcal{E}(X,Y)=\big\lbrace f \in C(X,Y) \, ; \ f \textup{ is a homeomorphic embedding of } X \textup{ into } Y \big\rbrace. \]
It is not difficult to show that $\mathcal{E}(X,Y)$ is $G_{\delta}$ in $C(X,Y)$. Hence, $\mathcal{E}(X,Y)$ is a Polish space.

For any set $X$ and any equivalence relation $E$ on $X$, a subset $M$ of $X$ is said to be $E$-invariant if $[x]_E \subseteq M$ for every $x \in M$. A transversal for $E$ is a subset of $X$ containing exactly one element from each $E$-equivalence class. 

Recall that for any group $G$ and any set $X$, a mapping $\alpha \colon G \times X \to X$ is said to be an action of $G$ on $X$ if $\alpha (e,x)=x$ and $\alpha \big( g, \alpha (h,x) \big) = \alpha(gh,x)$ for all $g,h \in G$ and $x \in X$, where $e$ is the identity element of $G$. If $\alpha$ is an action of $G$ on $X$, the equivalence relation $E$ on $X$ defined by
\[ xEy \iff \exists \, g \in G : \alpha (g,x)=y \]
is called the orbit equivalence relation induced by $\alpha$.

An equivalence relation $E$ on a Borel subset $Y$ of a Polish space $X$ is said to be countably separated if there is a sequence $(Z_n)_{n=1}^{\infty}$ of $E$-invariant Borel subsets of $Y$ such that for all $x,y \in Y$, the points $x$ and $y$ are $E$-equivalent if and only if $\{ n \in \N \, ; \ x \in Z_n \} = \{ n \in \N \, ; \ y \in Z_n \}$.

In the remaining part of this chapter we present various lemmata related to spaces of compact sets and spaces of continuous mappings.

\begin{lemma}\label{CharacterizationOfBorelnessIntoVietoris}
Let $X$ be a topological space and $Y$ a Polish space. For any mapping $f \colon X \to \mathcal{K}(Y)$, the following four assertions are equivalent:
\begin{enumerate}[label=(\roman*),font=\textup, noitemsep]
    \item $f$ is Borel measurable;
    \item the set $\big\lbrace x \in X ; \, f(x) \subseteq V \big\rbrace$ is Borel for every open set $V \subseteq Y$;
    \item the set $\big\lbrace x \in X ; \, f(x) \cap V \neq \emptyset \big\rbrace$ is Borel for every open set $V \subseteq Y$;
    \item the set $\big\lbrace x \in X ; \, f(x) \cap F \neq \emptyset \big\rbrace$ is Borel for every closed set $F \subseteq Y$.
\end{enumerate}
\end{lemma}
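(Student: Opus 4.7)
My plan is to prove the chain (i) $\Rightarrow$ (ii), (iii), (iv) $\Rightarrow$ (i) by first reducing the three pointwise conditions (ii), (iii), (iv) to each other, and then pushing back up to (i) via second countability of the Vietoris topology.

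The easy half is (i) $\Rightarrow$ (ii), (iii), (iv). Recall that the Vietoris topology on $\mathcal{K}(Y)$ has a subbasis consisting of the sets $V^+ := \{K : K \subseteq V\}$ and $V^- := \{K : K \cap V \neq \emptyset\}$ for $V$ open in $Y$; the former are open and the latter are open, while $\{K : K \cap F \neq \emptyset\}$ for $F$ closed is the complement of $(Y \setminus F)^+$, hence closed. So if $f$ is Borel, its preimages of these sets are Borel.

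Next I would show that (ii), (iii), (iv) are pairwise equivalent. The equivalence (ii) $\Leftrightarrow$ (iv) is purely formal: for any open $V \subseteq Y$,
\[
\{x : f(x) \subseteq V\} = X \setminus \{x : f(x) \cap (Y\setminus V) \neq \emptyset\},
\]
and as $V$ ranges over open sets, $Y\setminus V$ ranges over closed sets, so the two conditions are swapped under complementation. For (ii) $\Rightarrow$ (iii), I would use that $Y$ is metrizable, hence every open $V \subseteq Y$ is $F_\sigma$: writing $V = \bigcup_n F_n$ with $F_n$ closed, the set $\{x : f(x) \cap V \neq \emptyset\}$ equals $\bigcup_n \{x : f(x) \cap F_n \neq \emptyset\}$, which is Borel by (iv). For the reverse direction (iii) $\Rightarrow$ (iv) I would fix a compatible metric $d$ on $Y$, and for a nonempty closed $F$ set $V_n := \{y \in Y : d(y,F) < 1/n\}$. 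Then I claim
\[
\{x : f(x) \cap F \neq \emptyset\} = \bigcap_{n=1}^{\infty} \{x : f(x) \cap V_n \neq \emptyset\}.
\]
The forward inclusion is immediate; the reverse inclusion uses compactness of $f(x)$: points $y_n \in f(x) \cap V_n$ have a convergent subsequence whose limit lies in $f(x)$ (compact) and in $F$ (closed, since distances to $F$ tend to $0$). The empty-$F$ case is trivial. This is the step I expect to be the most delicate because it is the only place where compactness of the values $f(x)$ is essentially used.

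Finally, to close the loop with (i), I will invoke second countability: if $\{U_n\}_{n\in\N}$ is a countable base for $Y$, then $\{U_n^+\} \cup \{U_n^-\}$ is a countable subbasis for the Vietoris topology on $\mathcal{K}(Y)$. Every open $\mathcal{U} \subseteq \mathcal{K}(Y)$ is therefore a countable union of finite intersections of sets from this subbasis. Since Borel sets in $X$ are closed under finite intersections and countable unions, and since preimages commute with these operations, Borel measurability of $f$ on the countable subbasis (which is exactly (ii) restricted to the $U_n$'s together with (iii) restricted to the $U_n$'s, already guaranteed by the stronger statements (ii) and (iii)) yields $f^{-1}(\mathcal{U})$ Borel for every open $\mathcal{U}$, i.e.\ (i).
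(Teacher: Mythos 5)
Your overall route is the same as the paper's: reduce (i) to the Vietoris subbasis, get (ii)$\Leftrightarrow$(iv) by complementation, and pass between the ``hit'' and ``miss'' conditions by writing open sets as countable unions of closed sets and closed sets as countable intersections of open sets. Your treatment of (iii)$\Rightarrow$(iv) via $V_n:=\{y: d(y,F)<1/n\}$ and compactness of $f(x)$ is correct and is exactly the content the paper compresses into ``can be proven in a similar fashion,'' so the substantive implications are all in order.

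There is, however, one inaccurate claim in your closing step: if $\{U_n\}$ is a countable base for $Y$, the family $\{U_n^+\}\cup\{U_n^-\}$ is in general \emph{not} a subbasis for the Vietoris topology. The problem is the sets $V^+$ for $V$ disconnected: take $Y=\R$ with the rational-interval base, $V=(0,1)\cup(2,3)$ and $K=\{\tfrac12,\tfrac52\}$. Any basic interval $U$ with $K\subseteq U$ contains $[\tfrac12,\tfrac52]\ni 1$, so every finite intersection $U_{n_1}^+\cap\dots\cap U_{n_p}^+\cap W_1^-\cap\dots\cap W_q^-$ containing $K$ also contains $K\cup\{1\}\notin V^+$; hence $V^+$ is not open in the topology generated by your family. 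The gap is easily repaired and does not affect the theorem: either enlarge the countable family to $\{(U_{n_1}\cup\dots\cup U_{n_k})^+\}$ over all finite tuples (still countable, and still covered by hypothesis (ii) since finite unions of open sets are open), or argue as the paper does, using the \emph{full} subbasis $\{V^+,V^-: V\subseteq Y \text{ open}\}$ together with the fact that the second countable (hence hereditarily Lindel\"of) space $\mathcal{K}(Y)$ lets you write every open set as a countable union of finite intersections of subbasic sets. Since your hypotheses (ii) and (iii) apply to all open $V$, either fix is a one-line change.
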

\begin{proof}
Let $\mathcal{A}_V := \big\lbrace K \in \mathcal{K}(Y) \, ; \ K \cap V \neq \emptyset \big\rbrace$ and $\mathcal{B}_V := \big\lbrace K \in \mathcal{K}(Y) \, ; \ K \subseteq V \big\rbrace$ for every open set $V \subseteq Y$. Define $\mathscr{S}_1 := \{ \mathcal{A}_V \, ; \ V \subseteq Y \textup{ is open} \}$ and $\mathscr{S}_2 := \{ \mathcal{B}_V \, ; \ V \subseteq Y \textup{ is open} \}$. By the definition of the Vietoris topology, the family $\mathscr{S}_1 \cup \mathscr{S}_2$ is a subbase for $\mathcal{K}(Y)$. Thus, since $\mathcal{K}(Y)$ is separable and metrizable, (i) is equivalent to the conjunction of (ii) and (iii). This is a consequence of the general fact that a mapping from a topological space to a separable metrizable space is Borel measurable iff the preimage of every subbasic set is Borel. Moreover, it is clear that (ii) is equivalent to (iv).

It remains to show that (ii) is equivalent to (iii). Assume that (ii) holds and let $V \subseteq Y$ be open. We are going to prove that $\big\lbrace x \in X ; \, f(x) \cap V \neq \emptyset \big\rbrace$ is a Borel set. Since $Y$ is metrizable, there are closed sets $F_1, F_2, \dotsc \subseteq Y$ such that $V=\bigcup_{n \in \N} F_n$. Denoting $V_n := Y \setminus F_n$, $n \in \N$, we have
\begin{gather*}
    \big\lbrace x \in X ; \, f(x) \cap V \neq \emptyset \big\rbrace = \bigcup_{n \in \N} \big\lbrace x \in X ; \, f(x) \cap F_n \neq \emptyset \big\rbrace\\
    = \bigcup_{n \in \N} \Big( X \setminus \big\lbrace x \in X ; \, f(x) \subseteq V_n \big\rbrace \Big) = X \setminus \bigcap_{n \in \N} \big\lbrace x \in X ; \, f(x) \subseteq V_n \big\rbrace.
\end{gather*}
Since (ii) holds and each of the sets $V_1,V_2,\dotsc$ is open, we are done.

The implication from (iii) to (ii) can be proven in a similar fashion.
\end{proof}

\begin{lemma}\label{closureOfUnionIsBorel}
Let $X$ be a topological space and $Y$ a compact metrizable space. Let $f_n \colon X \to \mathcal{K}(Y)$, $n \in \N$, be Borel measurable mappings. Then the mapping $f \colon X \to \mathcal{K}(Y)$ defined by
\[ f(x)=\overline{\bigcup_{n \in \N} f_n (x)} \]
is Borel measurable.
\end{lemma}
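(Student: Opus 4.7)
My plan is to apply the previous lemma (Lemma~\ref{CharacterizationOfBorelnessIntoVietoris}), specifically the equivalence of (i) and (iii), reducing the Borel measurability of $f$ to checking that for every open $V\subseteq Y$ the set $\{x\in X:f(x)\cap V\neq\emptyset\}$ is Borel. Before doing that I would briefly check that $f$ is well-defined as a mapping into $\mathcal{K}(Y)$: each $f_n(x)$ is nonempty, so the union is nonempty, and being a closed subset of the compact space $Y$, its closure is compact.

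The key observation is that because $V$ is open, a set meets $V$ if and only if its closure does. Hence for every $x\in X$,
\[ f(x)\cap V\neq\emptyset \iff \overline{\bigcup_{n\in\N}f_n(x)}\cap V\neq\emptyset \iff \bigcup_{n\in\N}f_n(x)\cap V\neq\emptyset \iff \exists n\in\N:f_n(x)\cap V\neq\emptyset. \]
Therefore
\[ \{x\in X:f(x)\cap V\neq\emptyset\}=\bigcup_{n\in\N}\{x\in X:f_n(x)\cap V\neq\emptyset\}, \]
which is a countable union of Borel sets (each Borel by Lemma~\ref{CharacterizationOfBorelnessIntoVietoris}(iii) applied to the Borel measurable mapping $f_n$), hence Borel.

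There is no real obstacle here; the only subtle point is remembering to use the equivalent characterisation (iii) rather than (ii), since the ``meets an open set'' condition behaves well under taking closures while the ``contained in an open set'' condition does not distribute over unions in a useful way. Using (ii) directly would force one to handle expressions like $\{x:\bigcup_n f_n(x)\subseteq V\}=\bigcap_n\{x:f_n(x)\subseteq V\}$, which also works but only after first replacing the closure by the union (again using openness of $V$, via $\overline{A}\subseteq V\iff A\subseteq V$ for $V$ clopen, which is not generally available), so the route through (iii) is cleaner.
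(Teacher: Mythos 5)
Your proof is correct and follows essentially the same route as the paper: both reduce via Lemma~\ref{CharacterizationOfBorelnessIntoVietoris} to condition (iii), observe that an open set meets a set iff it meets its closure, and write $\{x : f(x)\cap V\neq\emptyset\}$ as the countable union $\bigcup_n\{x : f_n(x)\cap V\neq\emptyset\}$ of Borel sets. Nothing further is needed.
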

\begin{proof}
Clearly, for every open set $V \subseteq Y$, we have
\[ \big\lbrace x \in X ; \, f(x) \cap V \neq \emptyset \big\rbrace = \bigg\lbrace x \in X ; \ V \cap \bigcup_{n \in \N} f_n (x) \neq \emptyset \bigg\rbrace = \bigcup_{n \in \N} \big\lbrace x \in X ; \ f_n(x) \cap V \neq \emptyset \big\rbrace. \]
An application of Lemma \ref{CharacterizationOfBorelnessIntoVietoris} finishes the proof.
\end{proof}

\begin{lemma}\label{intersectionIsBorel}
For any Polish space $X$, the mapping from
\[ \big\lbrace (K_1,K_2) \in \mathcal{K}(X) \times \mathcal{K}(X) \, ; \ K_1 \cap K_2 \neq \emptyset \big\rbrace \]
to $\mathcal{K}(X)$ given by $(K_1,K_2) \mapsto K_1 \cap K_2$ is Borel measurable.
\end{lemma}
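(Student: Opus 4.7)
The plan is to apply condition (ii) of Lemma \ref{CharacterizationOfBorelnessIntoVietoris} to the map $f : D \to \mathcal{K}(X)$ given by $f(K_1, K_2) = K_1 \cap K_2$, where $D := \{(K_1, K_2) \in \mathcal{K}(X)^2 : K_1 \cap K_2 \neq \emptyset\}$. In fact I intend to prove the stronger statement that $f$ is upper semicontinuous in the Vietoris sense: for every open $V \subseteq X$, the set $\{(K_1, K_2) \in D : K_1 \cap K_2 \subseteq V\}$ is \emph{open} in $D$, from which Borel measurability follows at once by the cited lemma.

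The crux will be a short separation argument using that every Polish space is normal. Fix $(K_1, K_2) \in D$ with $K_1 \cap K_2 \subseteq V$. The compact sets $K_1 \setminus V$ and $K_2$ are disjoint, since any common point would lie in $(K_1 \cap K_2) \setminus V = \emptyset$, so normality produces disjoint open sets $U_1 \supseteq K_1 \setminus V$ and $U_2 \supseteq K_2$ (with $U_1 = \emptyset$ permitted in the degenerate case $K_1 \subseteq V$). Setting $W_1 := V \cup U_1$ and $W_2 := U_2$, one checks that $K_i \subseteq W_i$ and
\[ W_1 \cap W_2 = (V \cap U_2) \cup (U_1 \cap U_2) = V \cap U_2 \subseteq V. \]

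By the definition of the Vietoris topology, the set $\mathcal{N} := \{(L_1, L_2) \in \mathcal{K}(X)^2 : L_1 \subseteq W_1,\ L_2 \subseteq W_2\}$ is an open neighbourhood of $(K_1, K_2)$, and every $(L_1, L_2) \in \mathcal{N}$ satisfies $L_1 \cap L_2 \subseteq W_1 \cap W_2 \subseteq V$. Thus $(K_1, K_2)$ is an interior point of the target set, which establishes openness in $D$ and hence Borel measurability of $f$. I do not foresee any serious obstacle; the argument reduces to invoking normality of $X$ in the standard way, and upper semicontinuity of the intersection operation is a familiar phenomenon for compact sets.
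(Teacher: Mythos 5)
Your proof is correct and follows essentially the same route as the paper: both establish that $\{(K_1,K_2)\;;\;K_1\cap K_2\subseteq V\}$ is relatively open via a normality separation argument and then invoke Lemma \ref{CharacterizationOfBorelnessIntoVietoris}(ii). The only (immaterial) difference is that you separate $K_1\setminus V$ from all of $K_2$ and take $W_2=U_2$, whereas the paper symmetrically separates $K_1\setminus V$ from $K_2\setminus V$ and uses the neighbourhoods $V\cup V_1$ and $V\cup V_2$.
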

\begin{proof}
Denote by $\mathcal{A}$ the domain of the mapping in question and let $V\subseteq X$ be an open set. We are going to show that the set $\mathcal{G} := \big\lbrace (K_1,K_2) \in \mathcal{A} \, ; \ K_1 \cap K_2 \subseteq V \big\rbrace$ is relatively open (and hence relatively Borel) in $\mathcal{A}$. Let $(K_1,K_2) \in \mathcal{G}$ be given. Then $K_1 \cap K_2 \subseteq V$, which implies that $K_1 \setminus V$ and $K_2 \setminus V$ are disjoint closed sets. Therefore, there exist disjoint open sets $V_1,V_2 \subseteq X$ such that $K_1 \setminus V \subseteq V_1$ and $K_2 \setminus V \subseteq V_2$. Let $\mathcal{U} := \big\lbrace (L_1,L_2) \in \mathcal{A} \, ; \ L_1 \subseteq V \cup V_1 \textup{ and } L_2 \subseteq V \cup V_2 \big\rbrace$. Clearly, $\mathcal{U}$ is relatively open in $\mathcal{A}$ and it contains $(K_1,K_2)$. Moreover, it is easy to see that $\mathcal{U} \subseteq \mathcal{G}$.

By Lemma \ref{CharacterizationOfBorelnessIntoVietoris}, we are done.
\end{proof}

\begin{lemma}\label{IntersectionOfBorelMapsToHyperspaceIsBorel}
Let $X$ be a topological space and let $Y$ be a Polish space. Assume that $f_n \colon X \to \mathcal{K}(Y)$, $n \in \N$, are Borel measurable mappings such that $\bigcap \big\lbrace f_n(x) ; \, n \in \N \big\rbrace \neq \emptyset$ for every $x \in X$. Then the mapping $f \colon X \to \mathcal{K}(Y)$ given by $f(x) = \bigcap \big\lbrace f_n(x) ; \, n \in \N \big\rbrace$ is Borel measurable.
\end{lemma}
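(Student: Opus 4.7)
The plan is to reduce the countable intersection to a decreasing one and then exploit compactness together with Lemma \ref{CharacterizationOfBorelnessIntoVietoris}. For each $n \in \N$ define $g_n \colon X \to \mathcal{K}(Y)$ by
\[ g_n(x) := f_1(x) \cap f_2(x) \cap \dotsb \cap f_n(x). \]
The hypothesis $\bigcap_{k \in \N} f_k(x) \neq \emptyset$ guarantees that every finite intersection $g_n(x)$ is nonempty, so $g_n$ is well-defined as a mapping into $\mathcal{K}(Y)$, and the sequence $\big( g_n(x) \big)_{n \in \N}$ is decreasing with $f(x) = \bigcap_{n \in \N} g_n(x)$.

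First I would check by induction on $n$ that each $g_n$ is Borel measurable. The base case $g_1 = f_1$ is immediate. For the induction step, note that the mapping $x \mapsto \big( g_n(x), f_{n+1}(x) \big)$ takes values in the domain of the intersection mapping from Lemma \ref{intersectionIsBorel} (because $g_{n+1}(x) \neq \emptyset$), and it is Borel by the inductive hypothesis together with the measurability of $f_{n+1}$; composing with the Borel mapping from Lemma \ref{intersectionIsBorel} yields that $g_{n+1}$ is Borel.

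Next I would pass from the $g_n$ to $f$ using condition (ii) of Lemma \ref{CharacterizationOfBorelnessIntoVietoris}. Fix an open set $V \subseteq Y$. The key observation, which is really the heart of the argument, is that for a decreasing sequence of nonempty compact sets one has the equivalence
\[ \bigcap_{n \in \N} g_n(x) \subseteq V \iff \exists \, n \in \N : g_n(x) \subseteq V, \]
because otherwise $\big( g_n(x) \setminus V \big)_{n \in \N}$ would be a decreasing sequence of nonempty compact sets, hence would have nonempty intersection contained in $f(x) \setminus V$, contradicting $f(x) \subseteq V$. Consequently,
\[ \big\lbrace x \in X \, ; \ f(x) \subseteq V \big\rbrace = \bigcup_{n \in \N} \big\lbrace x \in X \, ; \ g_n(x) \subseteq V \big\rbrace, \]
and each of the sets on the right is Borel by the Borel measurability of $g_n$ and Lemma \ref{CharacterizationOfBorelnessIntoVietoris}(ii). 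Another appeal to Lemma \ref{CharacterizationOfBorelnessIntoVietoris} then finishes the proof.

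The only subtle point is the compactness step converting the countable intersection condition into a countable union of simpler conditions; everything else is bookkeeping. Note that Lemma \ref{intersectionIsBorel} was formulated only for pairs on the domain where the intersection is nonempty, so it is essential that the full intersection hypothesis propagates down to every finite intersection — which it does trivially.
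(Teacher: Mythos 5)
Your proof is correct and follows the same decomposition as the paper's: both define the finite intersections $g_n(x) = f_1(x) \cap \dots \cap f_n(x)$ and deduce their Borel measurability inductively from Lemma \ref{intersectionIsBorel}. The only divergence is in the finishing step. The paper observes that $f$ is the pointwise limit of $(g_n)_{n=1}^{\infty}$ in the Hausdorff metric and invokes the standard fact that a pointwise limit of Borel measurable maps into a metrizable space is Borel measurable, whereas you verify condition (ii) of Lemma \ref{CharacterizationOfBorelnessIntoVietoris} directly via the compactness equivalence $\bigcap_{n} g_n(x) \subseteq V \iff \exists\, n : g_n(x) \subseteq V$ for a decreasing sequence of nonempty compacta. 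Both finishes are valid; yours is slightly more self-contained, since essentially the same compactness observation is what one would need in order to justify the paper's (unproved) claim that $g_n(x) \to f(x)$ in the Hausdorff metric.
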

\begin{proof}
For every $n \in \N$, define a mapping $g_n \colon X \to \mathcal{K}(Y)$ by $g_n (x) = f_1 (x) \cap \dots \cap f_n (x)$. Then $g_1 = f_1$ and $g_{n+1} (x) = g_n (x) \cap f_{n+1} (x)$ for every $n \in \N$ and $x \in X$. Thus, it easily follows from Lemma \ref{intersectionIsBorel} that each of the mappings $g_1, g_2, g_3, \dotsc$ is Borel measurable. Moreover, $f$ can be shown to be the pointwise limit of the sequence $(g_n)_{n=1}^{\infty}$. Therefore, $f$ is Borel measurable.
\end{proof}



\begin{lemma}\label{CtsFunctionAndCompactSetMapstoImageIsCts}
Let $X$ be a compact metrizable space and $Y$ a Polish space. Then the mapping from $C(X,Y) \times \mathcal{K}(X)$ to $\mathcal{K}(Y)$ given by $(f,K) \mapsto f(K)$ is continuous.
\end{lemma}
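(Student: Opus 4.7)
The plan is to prove sequential continuity, which suffices since both the domain and codomain are metrizable. Fix a compatible metric $d_X$ on $X$ and a compatible metric $d_Y$ on $Y$; denote by $d_H$ the Hausdorff metric on $\mathcal{K}(Y)$ induced by $d_Y$. The topology on $C(X,Y)$ is that of uniform convergence with respect to $d_Y$, and the Vietoris topology on $\mathcal{K}(X)$ agrees with the Hausdorff metric coming from $d_X$.

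Suppose $(f_n,K_n)\to(f,K)$ in the product. Given $\varepsilon>0$, I would exploit three facts in combination: first, $f$ is uniformly continuous on $X$ (since $X$ is compact), yielding some $\delta>0$ such that $d_X(x,x')<\delta$ implies $d_Y(f(x),f(x'))<\varepsilon/2$; second, $f_n\to f$ uniformly, so for $n$ large enough, $\sup_{x\in X}d_Y(f_n(x),f(x))<\varepsilon/2$; third, $K_n\to K$ in Hausdorff distance, so for $n$ large, every point of $K_n$ is within $d_X$-distance $\delta$ of a point of $K$, and vice versa.

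The main step is then a standard double inclusion in Hausdorff distance. For any $y\in f_n(K_n)$, write $y=f_n(x_n)$ with $x_n\in K_n$, pick $x\in K$ with $d_X(x_n,x)<\delta$, and estimate
\[ d_Y(y,f(x)) \leq d_Y(f_n(x_n),f(x_n))+d_Y(f(x_n),f(x)) < \varepsilon/2+\varepsilon/2=\varepsilon, \]
so $y$ lies within $\varepsilon$ of $f(K)$. Symmetrically, for $y=f(x)\in f(K)$, pick $x_n\in K_n$ with $d_X(x,x_n)<\delta$ and compare $y$ with $f_n(x_n)\in f_n(K_n)$ by the same triangle inequality. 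This gives $d_H(f_n(K_n),f(K))\leq\varepsilon$ for all sufficiently large $n$, proving the required continuity.

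There is no real obstacle: the argument is just the standard joint continuity proof for evaluation-type maps on compact sets. The only thing to be slightly careful about is that uniform continuity of $f$ is what lets us use closeness in $X$ to conclude closeness in $Y$ uniformly, which is precisely why compactness of $X$ is needed in the hypothesis.
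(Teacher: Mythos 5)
Your proof is correct, but it takes a different route from the paper's. You argue metrically and sequentially: fix compatible metrics, use the Hausdorff metric on both hyperspaces, and combine uniform continuity of $f$, uniform convergence $f_n\to f$, and Hausdorff convergence $K_n\to K$ in a double-inclusion estimate. The paper instead works directly with the definition of the Vietoris topology: it shows that the preimages of the subbasic sets $\big\lbrace L\,;\ L\subseteq V\big\rbrace$ and $\big\lbrace L\,;\ L\cap V\neq\emptyset\big\rbrace$ are open in $C(X,Y)\times\mathcal{K}(X)$, by producing for each $(f,K)$ in the preimage an explicit basic neighbourhood (built from an open set $G\supseteq K$, resp. $G\ni x$, with $f(\overline{G})$ squeezed into $V$ via an intermediate open set $U$ with $\overline{U}\subseteq V$). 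The two arguments use compactness of $X$ in essentially the same place — you through uniform continuity, the paper through the fact that $f(K)$ is compact and can be separated from the complement of $V$. Your version is more quantitative and arguably more familiar; the paper's is metric-free and exhibits open neighbourhoods directly rather than passing through sequences. One small point worth making explicit in your write-up: for compact subsets, the Vietoris topology on $\mathcal{K}(Y)$ coincides with the Hausdorff metric topology for \emph{any} compatible metric on the Polish space $Y$ (the paper records this identification in the preliminaries), so your reduction to a Hausdorff-metric estimate is legitimate even though $Y$ need not be compact.
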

\begin{proof}
By the definition of the Vietoris topology, it suffices to show that for every open set $V \subseteq Y$, the sets
\begin{align*}
    \mathcal{A}_V &:= \big\lbrace (f,K) \in C(X,Y) \times \mathcal{K}(X) \, ; \ f(K) \subseteq V \big\rbrace,\\
    \mathcal{B}_V &:= \big\lbrace (f,K) \in C(X,Y) \times \mathcal{K}(X) \, ; \ f(K) \cap V \neq \emptyset \big\rbrace
\end{align*}
are open in $C(X,Y) \times \mathcal{K}(X)$. Let $V$ be given. To show that $\mathcal{A}_V$ is open, let $(f,K) \in \mathcal{A}_V$. Then $f(K)$ is a closed set contained in $V$. Thus, there is an open set $U \subseteq Y$ with $f(K) \subseteq U \subseteq \overline{U} \subseteq V$. Since $f$ is continuous, there is an open set $G \subseteq X$ such that $K \subseteq G$ and $f(G) \subseteq U$. Let
\[ \mathcal{U}:=\big\lbrace g \in C(X,Y) \, ; \ g\big(\overline{G}\big) \subseteq V \big\rbrace \times \big\lbrace L \in \mathcal{K}(X) \, ; \ L \subseteq G \big\rbrace.\]
Then $\mathcal{U}$ is open in $C(X,Y) \times \mathcal{K}(X)$ and $\mathcal{U} \subseteq  \mathcal{A}_V$. Moreover, since $f\big(\overline{G}\big) \subseteq \overline{f(G)}\subseteq \overline{U} \subseteq V$ and $K \subseteq G$, we have $(f,K) \in \mathcal{U}$.

Now, let us show that $\mathcal{B}_V$ is open. To that end, let $(f,K) \in \mathcal{B}_V$ be given and let $x \in K$ be a point satisfying $f(x) \in V$. Clearly, there is an open set $U \subseteq Y$ with $f(x) \in U \subseteq \overline{U} \subseteq V$. By the continuity of $f$, there is an open set $G \subseteq X$ such that $x \in G$ and $f(G) \subseteq U$. Let
\[ \mathcal{U}:=\big\lbrace g \in C(X,Y) \, ; \ g\big(\overline{G}\big) \subseteq V \big\rbrace \times \big\lbrace L \in \mathcal{K}(X) \, ; \ L \cap G \neq \emptyset \big\rbrace.\]
Again, it is easy to see that $\mathcal{U}$ is open, $\mathcal{U} \subseteq \mathcal{B}_V$ and $(f,K) \in \mathcal{U}$.
\end{proof}

The following lemma was essentially proved e.g. in \cite{Kennedy}. For the sake of completeness its proof is included.

\begin{lemma}\label{SpaceOfCtsFctionsEmbeddedIntoHyperspace}
Let $X$ be a compact metrizable space and $Y$ a Polish space. Then the mapping from $C(X,Y)$ to $\mathcal{K}(X \times Y)$ given by $f \mapsto \mathrm{graph}(f) = \big\lbrace (x,y) \in X \times Y ; \ y=f(x) \big\rbrace$ is a homeomorphic embedding.
\end{lemma}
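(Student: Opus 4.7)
The plan is to verify directly that the mapping $\Phi \colon f \mapsto \mathrm{graph}(f)$ is well-defined, injective, continuous, and has continuous inverse on its image. Fix compatible metrics $d_X$ on $X$ and $d_Y$ on $Y$, and use the product metric $d\bigl((x_1,y_1),(x_2,y_2)\bigr) = \max\bigl(d_X(x_1,x_2), d_Y(y_1,y_2)\bigr)$ on $X \times Y$. Well-definedness and injectivity are immediate: $\mathrm{graph}(f)$ is the image of $X$ under the continuous map $x \mapsto (x,f(x))$, hence a nonempty compact subset of $X \times Y$, and distinct continuous mappings have distinct graphs.

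Continuity of $\Phi$ is also straightforward. If $\lVert f - g \rVert_\infty < \varepsilon$, then each $(x, f(x)) \in \mathrm{graph}(f)$ lies within $d$-distance $\varepsilon$ of $(x, g(x)) \in \mathrm{graph}(g)$, and symmetrically, so the Hausdorff distance between the graphs is at most $\varepsilon$.

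The main work is showing that $\Phi^{-1}$ is continuous on its image. Suppose $f_n, f \in C(X,Y)$ satisfy $\mathrm{graph}(f_n) \to \mathrm{graph}(f)$ in $\mathcal{K}(X \times Y)$; I claim $f_n \to f$ uniformly. If not, there exist $\varepsilon > 0$, a subsequence (which I relabel), and points $x_n \in X$ with $d_Y(f_n(x_n), f(x_n)) \geq \varepsilon$ for every $n$. By compactness of $X$, passing to a further subsequence gives $x_n \to x$ for some $x \in X$. Hausdorff convergence supplies points $y_n \in X$ with $d\bigl((x_n, f_n(x_n)), (y_n, f(y_n))\bigr) \to 0$; in particular $y_n \to x$ and $d_Y(f_n(x_n), f(y_n)) \to 0$. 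Continuity of $f$ then yields $f(y_n) \to f(x)$ and $f(x_n) \to f(x)$, and combining these forces $f_n(x_n) \to f(x)$, contradicting $d_Y(f_n(x_n), f(x_n)) \geq \varepsilon$.

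The main obstacle is precisely this last step, where passing from Hausdorff convergence of graphs back to uniform convergence of mappings requires both the compactness of $X$ (for subsequential extraction) and the continuity of the limit $f$; the rest of the proof is essentially a diagram chase through the definitions.
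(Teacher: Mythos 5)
Your proof is correct, and it follows the same overall decomposition as the paper: well-definedness and injectivity are immediate, and the forward continuity is exactly the paper's $1$-Lipschitz estimate $d_H\bigl(\mathrm{graph}(f),\mathrm{graph}(g)\bigr)\leq \lVert f-g\rVert_\infty$. Where you diverge is the continuity of the inverse. The paper argues directly and quantitatively: given $f$ and $\varepsilon>0$, it uses the uniform continuity of $f$ on the compact space $X$ to produce a modulus $\Delta$, sets $\delta=\min\{\Delta,\tfrac{1}{2}\varepsilon\}$, and shows $d_H\bigl(\mathrm{graph}(f),\mathrm{graph}(g)\bigr)<\delta$ forces $\lVert f-g\rVert_\infty<\varepsilon$ via a two-term triangle inequality. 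You instead argue by contradiction with a sequential compactness extraction: pick witnesses $x_n$ of non-uniform convergence, pass to a convergent subsequence, and use Hausdorff convergence to pull nearby points off $\mathrm{graph}(f)$. Both arguments rest on the same two ingredients (compactness of $X$ and continuity of the limit $f$), and since $\mathcal{K}(X\times Y)$ and $C(X,Y)$ are metrizable, sequential continuity is indeed enough, so your version is complete. The paper's route buys an explicit $\delta$ in terms of the modulus of continuity of $f$ (which can be useful if one wants effective bounds); yours is slightly shorter to state but nonconstructive. One cosmetic remark: when you choose $x_n$ with $d_Y\bigl(f_n(x_n),f(x_n)\bigr)\geq\varepsilon$, it is cleanest to note that the supremum defining $\lVert f_n-f\rVert_\infty$ is attained by compactness, which you implicitly use.
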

\begin{proof}
Denote the mapping in question by $\Gamma$. Since $\Gamma$ is injective, it suffices to show that $\Gamma$ and $\Gamma^{-1}$ are continuous. Let $\varrho$ and $\sigma$ be arbitrary compatible metrics on $X$, $Y$ respectively. Define a metric $d$ on $X \times Y$ by $d \big( (x_1,y_1),(x_2,y_2) \big) = \max \big\lbrace \varrho(x_1,x_2),\sigma(y_1,y_2) \big\rbrace$, let $d_H$ be the Hausdorff metric on $\mathcal{K}(X \times Y)$ induced by $d$ and let $m$ be the uniform metric on $C(X,Y)$ induced by $\sigma$. That is,
\begin{align*}
    d_H (K,L) &= \max \Big\lbrace \max_{(x,y) \in K} \mathrm{dist}_d \big((x,y),L \big), \ \max_{(x,y) \in L} \mathrm{dist}_d \big( (x,y),K \big) \Big\rbrace,\\
    m(f,g) &= \max_{x \in X} \sigma \big( f(x),g(x) \big).
\end{align*}
Then $d$, $d_H$ and $m$ are compatible metrics on $X \times Y$, $\mathcal{K}(X \times Y)$ and $C(X,Y)$ respectively.
\begin{claim}\label{GammaIsLipschitz}
For any two functions $f,g \in C(X,Y)$, we have $d_H \big( \Gamma (f), \Gamma (g) \big) \leq m(f,g)$.
\end{claim}
\begin{claimproof}
Let $f,g \in C(X,Y)$. For every $(x,y) \in \Gamma (f)$, we have $y=f(x)$ and
\[ \mathrm{dist}_d \big((x,y), \Gamma(g) \big) \leq d \Big( (x,y), \big( x,g(x) \big) \Big) = \sigma \big( y,g(x) \big) = \sigma \big( f(x),g(x) \big) \leq m(f,g). \]
Similarly, $\mathrm{dist}_d \big((x,y), \Gamma(f) \big) \leq m(g,f)$ for every $(x,y) \in \Gamma (g)$. Thus, $d_H \big( \Gamma (f), \Gamma (g) \big) \leq m(f,g)$. \claimend
\end{claimproof}
The continuity of $\Gamma$ is an immediate consequence of Claim \ref{GammaIsLipschitz}. To show that $\Gamma^{-1}$ is continuous, we have to prove that for every $f \in C(X,Y)$ and $\varepsilon >0$, there is $\delta > 0$ such that $m(f,g) < \varepsilon$ for every $g \in C(X,Y)$ with $d_H \big( \Gamma (f), \Gamma (g) \big) < \delta$. Let $f \in C(X,Y)$ and $\varepsilon >0$ be given. By the (uniform) continuity of $f$, there is $\Delta >0$ such that $\sigma \big( f(x_1), f(x_2) \big) < \frac{1}{2} \varepsilon$ for every two points $x_1, x_2 \in X$ with $\varrho (x_1, x_2) < \Delta$. Let $\delta := \min \big\lbrace \Delta, \frac{1}{2} \varepsilon \big\rbrace$.
\begin{claim}
Assume that $g \in C(X,Y)$ satisfies $d_H \big( \Gamma (f), \Gamma (g) \big) < \delta$. Then $m(f,g) < \varepsilon$.
\end{claim}
\begin{claimproof}
Fix $x \in X$ with $m(f,g)=\sigma \big( f(x), g(x) \big)$. As $\big( x, g(x) \big) \in \Gamma (g)$ and $d_H \big( \Gamma (f), \Gamma (g) \big) < \delta$, there is $(z,y)\in \Gamma (f)$ such that $d \big( \big( x,g(x) \big), (z,y) \big) < \delta$. We have $\varrho (x,z) < \delta \leq \Delta$, $\sigma \big( y, g(x) \big) < \delta \leq \frac{1}{2} \varepsilon$ and $y=f(z)$. Thus, since $m(f,g) =\sigma \big( f(x), g(x) \big) \leq \sigma \big( f(x), f(z) \big) + \sigma \big( f(z), g(x) \big)$, it follows that $m(f,g) \leq \sigma \big( f(x), f(z) \big) + \sigma \big( y, g(x) \big) < \tfrac{1}{2}\varepsilon + \tfrac{1}{2}\varepsilon = \varepsilon$. \claimend
\end{claimproof}
\end{proof}

\begin{lemma}\label{CompactMapstoPreimageIsBorel}
Let $X$ be a compact metrizable space and $Y$ a Polish space. Then the mapping from $\big\lbrace (f,K) \in C(X,Y) \times \mathcal{K}(Y) \, ; \ f^{-1}(K) \neq \emptyset \big\rbrace$ to $\mathcal{K}(X)$ given by $(f,K) \mapsto f^{-1}(K)$ is Borel measurable.
\end{lemma}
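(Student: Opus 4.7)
The plan is to apply the criterion (iii) of Lemma \ref{CharacterizationOfBorelnessIntoVietoris}: writing $\mathcal{D}$ for the domain $\bigl\{(f,K)\in C(X,Y)\times\mathcal{K}(Y)\,;\ f^{-1}(K)\neq\emptyset\bigr\}$, it suffices to check that for every open $V\subseteq X$ the set
\[
\mathcal{S}_V:=\bigl\{(f,K)\in\mathcal{D}\,;\ f^{-1}(K)\cap V\neq\emptyset\bigr\}
\]
is Borel in $\mathcal{D}$. (First one should observe that $\mathcal{D}$ itself is Borel—in fact closed—in $C(X,Y)\times\mathcal{K}(Y)$: by Lemma \ref{CtsFunctionAndCompactSetMapstoImageIsCts} the map $f\mapsto f(X)$ is continuous, and the set $\{(L,K)\in\mathcal{K}(Y)^{2}\,;\ L\cap K\neq\emptyset\}$ is closed in $\mathcal{K}(Y)^{2}$, since disjoint compacta in a metric space admit disjoint open neighbourhoods.)

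The key observation is the elementary equivalence
\[
f^{-1}(K)\cap V\neq\emptyset \iff f(V)\cap K\neq\emptyset.
\]
The difficulty is that $f(V)$ is in general not compact, so one cannot directly quote Lemma \ref{CtsFunctionAndCompactSetMapstoImageIsCts}. I would sidestep this by using that $X$ is compact metrizable, hence $V$ is $\sigma$-compact: choose compact sets $F_{1},F_{2},\dots\subseteq X$ with $V=\bigcup_{n\in\N}F_{n}$. Then
\[
f(V)\cap K\neq\emptyset \iff \exists\,n\in\N:\ f(F_{n})\cap K\neq\emptyset,
\]
so
\[
\mathcal{S}_V=\bigcup_{n\in\N}\bigl\{(f,K)\in\mathcal{D}\,;\ f(F_{n})\cap K\neq\emptyset\bigr\}.
\]

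For each fixed $n$, Lemma \ref{CtsFunctionAndCompactSetMapstoImageIsCts} (applied with the constant mapping $K\mapsto F_{n}$, which is trivially continuous) makes $(f,K)\mapsto\bigl(f(F_{n}),K\bigr)$ a continuous map from $\mathcal{D}$ into $\mathcal{K}(Y)\times\mathcal{K}(Y)$; pulling back the closed set $\{(L,K)\,;\ L\cap K\neq\emptyset\}$ shows that the $n$-th term in the union above is relatively closed in $\mathcal{D}$. Hence $\mathcal{S}_V$ is an $F_{\sigma}$ subset of $\mathcal{D}$, in particular Borel, and Lemma \ref{CharacterizationOfBorelnessIntoVietoris} completes the proof. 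The only mild obstacle is the non-compactness of $f(V)$, which is dealt with by the $\sigma$-compact decomposition of $V$.
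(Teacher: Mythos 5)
Your proof is correct, but it takes a genuinely different route from the paper. The paper's argument is a short composition of three of its lemmata: it forms $\Psi(f,K)=\mathrm{graph}(f)\cap(X\times K)$, which is Borel measurable by Lemmata \ref{intersectionIsBorel} and \ref{SpaceOfCtsFctionsEmbeddedIntoHyperspace}, and then observes that $f^{-1}(K)=\pi_1\big(\Psi(f,K)\big)$, where the image under the projection $\pi_1$ is handled by Lemma \ref{CtsFunctionAndCompactSetMapstoImageIsCts}. You instead verify criterion (iii) of Lemma \ref{CharacterizationOfBorelnessIntoVietoris} directly, using the equivalence $f^{-1}(K)\cap V\neq\emptyset\iff f(V)\cap K\neq\emptyset$ together with a $\sigma$-compact exhaustion $V=\bigcup_n F_n$ of the open set $V$ (available since $X$ is compact metrizable), reducing everything to the closed condition $f(F_n)\cap K\neq\emptyset$. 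All the steps check out: the set $\{(L,K)\in\mathcal{K}(Y)^2\,;\ L\cap K\neq\emptyset\}$ is indeed closed by the separation argument you give, $(f,K)\mapsto f(F_n)$ is continuous by Lemma \ref{CtsFunctionAndCompactSetMapstoImageIsCts} (discarding any empty $F_n$), and so each $\mathcal{S}_V$ is $F_\sigma$, which is more precise information than mere Borelness. The paper's route is shorter given its toolbox and avoids any decomposition of open sets, while yours is more self-contained at the level of the subbase and pins down the exact complexity of the preimage sets; either is a valid proof of the lemma.
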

\begin{proof}
Denote by $\mathcal{A}$ the domain of the mapping in question and define a mapping $\Psi \colon \mathcal{A} \to \mathcal{K}(X \times Y)$ by $\Psi (f,K) = \mathrm{graph}(f) \cap (X \times K)$. Then $\Psi$ is Borel measurable by Lemmata \ref{intersectionIsBorel} and \ref{SpaceOfCtsFctionsEmbeddedIntoHyperspace}. Denote by $\pi_1$ the coordinate projection from $X \times Y$ to $X$. By Lemma \ref{CtsFunctionAndCompactSetMapstoImageIsCts}, the mapping from $\mathcal{A}$ to $\mathcal{K}(X)$ given by $(f,K) \mapsto \pi_1 \big( \Psi (f,K) \big)$ is Borel measurable. However, $\pi_1 \big( \Psi (f,K) \big)$ is equal to $f^{-1}(K)$ for all $(f,K) \in \mathcal{A}$.
\end{proof}

The following lemma is implied by \cite[Theorem 3, §43]{Kuratowski}.

\begin{lemma}\label{ClosedSetInProductOfPolishAndCompact}
Let $X$ be a Polish space, $Y$ a compact metrizable space and $F \subseteq X \times Y$ a closed set. Then the mapping from $\big\lbrace x \in X \, ; \ \exists \, y \in Y : (x,y) \in F \big\rbrace$ to $\mathcal{K}(Y)$ given by $x \mapsto \big\lbrace y \in Y ; \, (x,y) \in F \big\rbrace$ is Borel measurable (in fact, it is upper semicontinuous).
\end{lemma}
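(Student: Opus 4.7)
The plan is to prove the stronger assertion, namely that the mapping $\Phi \colon D \to \mathcal{K}(Y)$ defined by $\Phi(x) = F_x := \{y \in Y \, ; \, (x,y) \in F\}$, where $D := \{x \in X \, ; \, F_x \neq \emptyset\}$, is upper semicontinuous. Borel measurability will then be a free corollary via Lemma \ref{CharacterizationOfBorelnessIntoVietoris}, because upper semicontinuity is exactly the statement that $\{x \in D \, ; \, \Phi(x) \subseteq V\}$ is relatively open in $D$ for every open $V \subseteq Y$, which is condition (ii) of that lemma (relativized to $D$, and $D$ itself is Borel since, as noted below, it is even closed in $X$).

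First I would observe that the domain $D$ is a closed (hence Borel) subset of $X$: since $Y$ is compact, the projection $\pi_X \colon X \times Y \to X$ is a closed map, and $D = \pi_X(F)$. So there is no issue with the domain being a reasonable space.

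The main step is the upper semicontinuity itself. Fix $x_0 \in D$ and an open set $V \subseteq Y$ with $F_{x_0} \subseteq V$. I need an open neighborhood $U$ of $x_0$ in $X$ such that $F_x \subseteq V$ for every $x \in U \cap D$. Equivalently, I need $U$ such that $(U \times (Y \setminus V)) \cap F = \emptyset$. The set $Y \setminus V$ is compact, and for each $y \in Y \setminus V$ we have $(x_0, y) \notin F$ (since $F_{x_0} \subseteq V$), so by the openness of $(X \times Y) \setminus F$ there are open neighborhoods $U_y \ni x_0$ in $X$ and $W_y \ni y$ in $Y$ with $(U_y \times W_y) \cap F = \emptyset$. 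By compactness of $Y \setminus V$, finitely many $W_{y_1}, \dotsc, W_{y_n}$ cover $Y \setminus V$; then $U := \bigcap_{i=1}^n U_{y_i}$ is an open neighborhood of $x_0$ with the required property. This is the only real content in the argument and amounts to the usual tube-lemma flavored reasoning available whenever the fiber direction is compact.

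Finally, to conclude Borel measurability, I would apply Lemma \ref{CharacterizationOfBorelnessIntoVietoris} to $\Phi \colon D \to \mathcal{K}(Y)$: the upper semicontinuity we just established is condition (ii) of that lemma, so $\Phi$ is Borel measurable. I do not expect any genuine obstacle here; the only mild care required is to keep in mind that upper semicontinuity of a $\mathcal{K}(Y)$-valued map is the statement about preimages of the \emph{upper} Vietoris basic sets $\{K \, ; \, K \subseteq V\}$ being open, which is precisely what the tube-lemma argument delivers.
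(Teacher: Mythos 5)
Your proof is correct. The paper itself gives no argument for this lemma --- it only cites \cite[Theorem 3, \S 43]{Kuratowski} --- and your tube-lemma argument (compactness of $Y\setminus V$ plus closedness of $F$ yielding a neighbourhood $U$ of $x_0$ with $(U\times(Y\setminus V))\cap F=\emptyset$) is exactly the standard proof behind that citation; passing from upper semicontinuity to Borel measurability via condition (ii) of Lemma \ref{CharacterizationOfBorelnessIntoVietoris} is also exactly how the paper intends the lemma to be used.
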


\section{Compacta as continuous images of the Cantor space in a Borel measurable way}

In this section we prove that there exists a Borel measurable mapping $T \colon \mathcal{K}(Q) \to C(\mathcal{C},Q)$ such that for every $K \in \mathcal{K}(Q)$, the mapping $T(K)$ maps $\mathcal{C}$ onto $K$. This can be accomplished using the Kuratowski and Ryll-Nardzewski selection theorem. However, we shall present a more elegant approach. The basic idea is the following: We fix a suitable continuous surjection $\varphi \colon \mathcal{C} \to Q$. Then for any $K \in \mathcal{K}(Q)$, the restriction of $\varphi$ to $\varphi^{-1}(K)$ is a continuous mapping whose range is equal to $K$. Moreover, the assignment $K \mapsto \varphi \restriction_{\varphi^{-1}(K)}$ seems to be constructive enough to ensure Borel measurability. However, the problem is that the domain of $\varphi \restriction_{\varphi^{-1}(K)}$, i.e. the set $\varphi^{-1}(K)$, depends on $K$ and it is not equal to $\mathcal{C}$ (unless $K=Q$). The main tool allowing us to get rid of this problem is the following theorem due to Burgess (see \cite{Burgess}). We will refer to this theorem as the Burgess selection theorem.

\begin{theorem*}
Let $G$ be a Polish group, $X$ a Polish space and let $\alpha \colon G\times X \to X$ be a continuous action of $G$ on $X$. Denote by $E$ the orbit equivalence relation induced by $\alpha$ and let $Y$ be an $E$-invariant Borel subset of $X$. Let $E_Y$ be the restriction of $E$ to $Y$ and assume that $E_Y$ is countably separated. Then there is a Borel transversal for $E_Y$.
\end{theorem*}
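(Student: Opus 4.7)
The strategy is to turn the countable separation of $E_Y$ into a Borel code and then exploit the Polish group action, via Vaught transforms, to select a canonical point from each orbit. Let $(Z_n)_{n \in \N}$ witness the countable separation hypothesis, and extend $\{Z_n\}$ to a countable Boolean algebra $\mathcal{A} = \{A_n \colon n \in \N\}$ of Borel subsets of $X$ that generates the Borel $\sigma$-algebra on $X$. The map $\Phi \colon X \to 2^{\N}$ given by $\Phi(x)(n) = 1 \iff x \in A_n$ is then an injective Borel map; the $Z_n$-coordinates of $\Phi$ are constant on each $E_Y$-class, while the remaining coordinates vary within a class. A Borel transversal is obtained as soon as we produce a Borel, $E_Y$-invariant selector $s \colon Y \to Y$ with $s(x) \in [x]_{E_Y}$, via $T = \{x \in Y \colon s(x) = x\}$.

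For each orbit I would pick $s(x) \in [x]_{E_Y}$ to be the unique point whose $\Phi$-value is lexicographically smallest in $\Phi([x]_{E_Y})$. The tool making this Borel is the Vaught transform: for each Borel $B \subseteq X$ and open $U \subseteq G$, the set $B^{*U} = \{x \in X \colon \{g \in U \colon \alpha(g,x) \in B\} \text{ is non-meager in } U\}$ is again Borel, and for a countable basis $(U_k)$ of $G$ the collection of all such transforms of the $A_n$'s is rich enough to decide, in a Borel way in $x$, whether a given finite binary string is an initial segment of $\Phi(y)$ for some $y \in [x]_{E_Y}$. Building the lexicographically least initial segment coordinate by coordinate then yields the desired Borel selector $s$; the $E_Y$-invariance of $s$ is automatic from the definition, and its Borelness follows because each step is a Boolean combination of the Borel sets $A_n^{*U_k}$.

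The main obstacle is checking that the infimum in the minimization is actually attained at a (unique) orbit point rather than only approached. Uniqueness is automatic from the injectivity of $\Phi$ on $X$. Attainment is where both hypotheses are used essentially: the Polish group structure, via Baire category on $G$ together with Effros-type orbit properties of continuous Polish group actions, guarantees that the coordinatewise minimization produces values realized by an actual point of the orbit; smoothness ensures that the resulting code determines an $E_Y$-class unambiguously, so that the selector is well-defined and consistent across $Y$. Once $s$ is known to be Borel, the transversal $T$ is Borel as the preimage of the diagonal of $Y$ under the Borel map $y \mapsto (y, s(y))$, and meets each $E_Y$-class in exactly one point by construction.
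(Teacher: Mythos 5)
First, a remark on scope: the paper does not prove this statement at all --- it is quoted as the Burgess selection theorem and used as a black box, with a citation to Burgess's article. So there is no in-paper argument to compare yours against; your text has to stand on its own as a proof of Burgess's theorem. Its skeleton (code points by a countable generating Boolean algebra containing the separating sets $Z_n$, use category quantifiers over $G$ to turn orbit-level statements into Borel ones, and select from each orbit the point with lexicographically least code; then pass from an invariant Borel selector $s$ to the transversal $\{x : s(x)=x\}$) is indeed the right skeleton, and the last step is correct and standard. But the two places where you gesture instead of arguing are exactly where the theorem lives.

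Concretely: (1) the predicate ``some $y \in [x]_{E_Y}$ has the finite string $\sigma$ as an initial segment of $\Phi(y)$'' is $\exists\, g \in G : \alpha(g,x) \in A_\sigma$, a projection of a Borel set along $G$, hence in general only analytic. The Vaught transform $A_\sigma^{*U}$ is Borel, but it asserts that non-meagerly many $g \in U$ send $x$ into $A_\sigma$, which is strictly stronger than the existential statement; so the Borel sets you construct do not ``decide'' the predicate you claim they decide, and the whole recursion must instead be run with the category-quantified predicates. (2) Doing so produces, for each $x$, a code $\beta(x) \in \{0,1\}^{\N}$ and a decreasing chain of basic open sets $U_n \subseteq G$ with $\{ g \in U_n \, ; \ \alpha(g,x) \in A_{\beta(x) \restriction n} \}$ non-meager (or comeager) in $U_n$ for every $n$. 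The claim that a single $g$ --- hence an actual orbit point $y=\alpha(g,x)$ realizing the code --- satisfies all these conditions simultaneously is the attainment problem you yourself flag, and ``Baire category on $G$ together with Effros-type orbit properties'' is the name of the difficulty, not an argument for it: a countable intersection of non-meager sets can be empty, and even with $\overline{U_{n+1}} \subseteq U_n$ and diameters shrinking to a point $g_0$, comeagerness of each condition in $U_n$ does not place $g_0$ in their intersection. Supplying this realization step (which in the actual proofs requires a simultaneous localization in $G$ and in $X$, or an equivalent device) is the genuine content of Burgess's theorem; until it is supplied, what you have is a plan rather than a proof.
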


Let us split the proof of the main theorem into three lemmata.

\begin{lemma}\label{PreimagesOfCompactaAreHomeoToCantor}
There exists a continuous surjective mapping $\varphi \colon \mathcal{C} \to Q$ such that $\varphi^{-1}(K)$ is homeomorphic to $\mathcal{C}$ for every $K \in \mathcal{K}(Q)$.
\end{lemma}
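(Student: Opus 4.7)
The plan is to construct $\varphi$ as a composition that factors through $\mathcal{C}\times\mathcal{C}$, thereby using the extra copy of the Cantor space as a buffer that eliminates isolated points in every fiber.

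First, fix any continuous surjection $\psi\colon\mathcal{C}\to Q$; such a $\psi$ exists by the classical theorem that every nonempty compact metrizable space is a continuous image of the Cantor space. Next, using Brouwer's characterization of $\mathcal{C}$ (nonempty, compact, metrizable, zero-dimensional, no isolated points), observe that $\mathcal{C}\times\mathcal{C}$ is homeomorphic to $\mathcal{C}$, and fix a homeomorphism $h\colon\mathcal{C}\to\mathcal{C}\times\mathcal{C}$. Let $\pi_1\colon\mathcal{C}\times\mathcal{C}\to\mathcal{C}$ denote the projection onto the first coordinate, and define
\[
\varphi := \psi\circ\pi_1\circ h\colon\mathcal{C}\to Q.
\]
Then $\varphi$ is continuous and surjective (since both $\psi$ and $\pi_1\circ h$ are).

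For any $K\in\mathcal{K}(Q)$, the homeomorphism $h$ identifies $\varphi^{-1}(K)$ with $(\psi\circ\pi_1)^{-1}(K) = \psi^{-1}(K)\times\mathcal{C}$. So it suffices to verify that $\psi^{-1}(K)\times\mathcal{C}$ is homeomorphic to $\mathcal{C}$ for every $K\in\mathcal{K}(Q)$. I will invoke Brouwer's characterization: this product is nonempty (because $\psi$ is surjective and $K\neq\emptyset$), compact (product of compact spaces), metrizable, and zero-dimensional (these properties are preserved under countable products of zero-dimensional metrizable spaces, and here we just need a finite product).

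The only nontrivial property is the absence of isolated points, and this is exactly where pairing with the extra factor $\mathcal{C}$ pays off. Given any $(a,c)\in\psi^{-1}(K)\times\mathcal{C}$ and any basic open neighborhood $U\times V$ of $(a,c)$ in $\mathcal{C}\times\mathcal{C}$, the set $V$ is a nonempty open subset of $\mathcal{C}$ and hence infinite, so $(U\times V)\cap(\psi^{-1}(K)\times\mathcal{C})\supseteq\{a\}\times V$ is infinite. Thus $(a,c)$ is not isolated, and Brouwer's theorem yields $\psi^{-1}(K)\times\mathcal{C}\cong\mathcal{C}$, completing the proof. I do not expect any serious obstacle here; the only conceptual point is recognizing that one should enlarge the source by a redundant Cantor factor so that every fiber automatically inherits perfectness.
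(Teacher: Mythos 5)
Your proof is correct and takes essentially the same route as the paper: both define the map as $\psi\circ\pi_1$ on $\mathcal{C}\times\mathcal{C}$ (so that every fiber is $\psi^{-1}(K)\times\mathcal{C}$, which is perfect), verify Brouwer's characterization, and then precompose with a homeomorphism $\mathcal{C}\cong\mathcal{C}\times\mathcal{C}$. No gaps.
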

\begin{proof}
It is well-known that every nonempty compact metrizable space is a continuous image of $\mathcal{C}$. Therefore, there is a continuous surjection $\psi \colon \mathcal{C} \to Q$. Define a mapping $\Psi \colon \mathcal{C} \times \mathcal{C} \to Q$ by $\Psi(\alpha, \beta) = \psi (\alpha)$. Then $\Psi$ is a continuous surjection and $\Psi^{-1} (A) = \psi^{-1} (A) \times \mathcal{C}$ for every subset $A$ of $Q$. Hence, for every $K \in \mathcal{K}(Q)$, we can easily see that $\Psi^{-1}(K)$ is a nonempty, compact, zero-dimensional space with no isolated points. This shows that $\Psi^{-1} (K)$ is homeomorphic to $\mathcal{C}$ for every $K \in \mathcal{K}(Q)$. Finally, since $\mathcal{C} \times \mathcal{C}$ is homeomorphic to $\mathcal{C}$ as well, we can fix a homeomorphism $h \colon \mathcal{C} \to \mathcal{C} \times \mathcal{C}$ and then define $\varphi := \Psi \circ h$.
\end{proof}

Before stating the next lemma, let us denote
\[ \mathcal{K}_{\mathcal{C}}:= \big\lbrace K \in \mathcal{K}(\mathcal{C}) \, ; \ K \textup{ is homeomorphic to } \mathcal{C} \big\rbrace. \]

\begin{lemma}\label{BorelTransverzal}
There exists a Borel set $\mathcal{B} \subseteq \mathcal{E}(\mathcal{C}, \mathcal{C})$ such that for every $K \in \mathcal{K}_{\mathcal{C}}$, there is exactly one $f \in \mathcal{B}$ with $f (\mathcal{C})=K$. 
\end{lemma}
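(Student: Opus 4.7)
The plan is to realize the equivalence relation $f \sim g \iff f(\mathcal{C}) = g(\mathcal{C})$ on $\mathcal{E}(\mathcal{C},\mathcal{C})$ as an orbit equivalence relation of a continuous Polish group action, verify that it is countably separated, and then invoke the Burgess selection theorem to obtain the desired set $\mathcal{B}$.

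Concretely, I would take $G$ to be the Polish group $H(\mathcal{C})$ of self-homeomorphisms of $\mathcal{C}$ with the uniform topology, let $X := C(\mathcal{C},\mathcal{C})$, and define $\alpha \colon G \times X \to X$ by $\alpha(h,f) := f \circ h^{-1}$. The action axioms hold (the inverse is needed so that $\alpha(g,\alpha(h,f)) = \alpha(gh,f)$), and continuity of $\alpha$ follows from the continuity of inversion in $H(\mathcal{C})$ together with the fact that composition is continuous on $C(\mathcal{C},\mathcal{C}) \times C(\mathcal{C},\mathcal{C})$ since the domain is compact. Let $Y := \mathcal{E}(\mathcal{C},\mathcal{C})$. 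This is Borel (in fact $G_\delta$) in $X$ and $E$-invariant, since precomposing an injection with a bijection produces an injection. On $Y$, the orbit relation is exactly $f E_Y g \iff f(\mathcal{C}) = g(\mathcal{C})$: the implication $\Rightarrow$ is immediate, and for $\Leftarrow$, if $f(\mathcal{C}) = g(\mathcal{C})$ then both $f$ and $g$ are homeomorphisms onto the same compact set, so $h := g^{-1} \circ f \in H(\mathcal{C})$ and $g = f \circ h^{-1}$.

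Next I would verify that $E_Y$ is countably separated. Fix a countable base $(V_n)_{n \in \N}$ for the topology of $\mathcal{C}$ and define
\[ Z_n := \big\lbrace f \in Y \, ; \ f(\mathcal{C}) \cap V_n \neq \emptyset \big\rbrace. \]
By Lemma \ref{CtsFunctionAndCompactSetMapstoImageIsCts}, the assignment $f \mapsto f(\mathcal{C})$ is continuous from $Y$ to $\mathcal{K}(\mathcal{C})$, so each $Z_n$ is relatively open in $Y$ and hence Borel. Each $Z_n$ is clearly $E_Y$-invariant, as membership depends only on the image $f(\mathcal{C})$. Finally, any two distinct compact subsets of $\mathcal{C}$ differ already on some $V_n$ (a point in the symmetric difference is separated from the other set by a basic open neighborhood), so the family $(Z_n)$ separates orbits.

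With these ingredients in place, the Burgess selection theorem supplies a Borel transversal $\mathcal{B} \subseteq Y$ for $E_Y$. For any $K \in \mathcal{K}_{\mathcal{C}}$, some homeomorphism $\mathcal{C} \to K$ exists by the definition of $\mathcal{K}_{\mathcal{C}}$, and such a homeomorphism, viewed as an element of $\mathcal{E}(\mathcal{C},\mathcal{C})$, lies in the orbit $\{f \in Y : f(\mathcal{C}) = K\}$; by the transversal property this orbit meets $\mathcal{B}$ in exactly one point. The only step requiring care is the algebraic setup of the action (keeping the inverse in $\alpha(h,f) = f \circ h^{-1}$), and the countable separation argument, though both are routine; no essential obstruction arises once Burgess is available.
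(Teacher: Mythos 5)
Your proposal is correct and follows essentially the same route as the paper: realize $f\sim g\iff f(\mathcal{C})=g(\mathcal{C})$ as the orbit equivalence relation of the continuous action of the homeomorphism group of $\mathcal{C}$ by precomposition, check countable separation via the sets $Z_n=\{f\,;\ f(\mathcal{C})\cap V_n\neq\emptyset\}$ for a countable base $(V_n)$, and apply the Burgess selection theorem. The only (cosmetic) differences are that the paper lets the group act directly on the Polish space $\mathcal{E}(\mathcal{C},\mathcal{C})$ with $Y=X$ rather than on all of $C(\mathcal{C},\mathcal{C})$ with $Y=\mathcal{E}(\mathcal{C},\mathcal{C})$, and your insertion of $h^{-1}$ to make $\alpha$ a genuine left action is a correct bit of care (the paper's $\alpha(g,f)=f\circ g$ is, strictly speaking, a right action, which changes nothing about the orbits).
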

\begin{proof}
Denote $X:=\mathcal{E}(\mathcal{C}, \mathcal{C})$. Let $G$ be the group of self-homeomorphisms of $\mathcal{C}$, where the group operation is composition, and equip $G$ with the subspace topology inherited from $C(\mathcal{C},\mathcal{C})$.  Then $G$ is a Polish group (this is well-known, see e.g. \cite[Example 2.2.4]{Gao}). Define a mapping $\alpha \colon G \times X \to X$ by $\alpha (g,f)=f\circ g$. Then $\alpha$ is an action of $G$ on $X$ and it is easy to see that $\alpha$ is continuous. Denote by $E$ the orbit equivalence relation induced by $\alpha$. Clearly, for all $f_1, f_2 \in X$, we have
\[ f_1 E f_2 \iff f_1(\mathcal{C}) = f_2(\mathcal{C}). \]
Thus, to prove this lemma, it suffices to show that there is a Borel transversal for $E$. To that end, we are going to use the Burgess selection theorem with $Y:=X$. We have already verified most of the assumptions of the selection theorem, it remains to show that $E$ is countably separated. Let $(U_n)_{n=1}^{\infty}$ be a sequence of open subset of $\mathcal{C}$ forming a base for the topology of $\mathcal{C}$. For $n \in \N$, let
\[ Z_n := \big\lbrace f \in X \, ; \ U_n \cap f(\mathcal{C}) \neq \emptyset \big\rbrace . \]
Then for any $n \in \N$, the set $Z_n$ is $E$-invariant and open in $X$. Moreover, for all $f_1, f_2 \in X$, we have
\[ f_1 E f_2 \iff f_1(\mathcal{C}) = f_2(\mathcal{C}) \iff \{ n \in \N \, ; \ f_1 \in Z_n \} = \{ n \in \N \, ; \ f_2 \in Z_n \} . \]
\end{proof}

\begin{lemma}\label{CantorImagesLemma}
There is a Borel measurable mapping $T_0 \colon \mathcal{K}_{\mathcal{C}} \to \mathcal{E}(\mathcal{C}, \mathcal{C})$ such that for every $K \in \mathcal{K}_{\mathcal{C}}$, the image of $\mathcal{C}$ under $T_0(K)$ is equal to $K$. 
\end{lemma}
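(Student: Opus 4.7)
My plan is to build $T_0$ as the inverse of the natural bijection coming from the Borel transversal furnished by Lemma \ref{BorelTransverzal}. First, I would introduce the evaluation map $\Phi \colon \mathcal{E}(\mathcal{C},\mathcal{C}) \to \mathcal{K}(\mathcal{C})$ given by $\Phi(f) := f(\mathcal{C})$. This map is continuous: applying Lemma \ref{CtsFunctionAndCompactSetMapstoImageIsCts} with $X = Y = \mathcal{C}$, the assignment $(f,K) \mapsto f(K)$ is continuous on $C(\mathcal{C},\mathcal{C}) \times \mathcal{K}(\mathcal{C})$, and restricting to the slice $K = \mathcal{C}$ shows that $\Phi$ is continuous on the Polish space $\mathcal{E}(\mathcal{C},\mathcal{C})$.

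Since $\mathcal{B}$ is a transversal for the orbit relation from Lemma \ref{BorelTransverzal}, which is precisely $f_1 E f_2 \iff f_1(\mathcal{C}) = f_2(\mathcal{C})$, the restriction $\Phi \restriction_{\mathcal{B}} \colon \mathcal{B} \to \mathcal{K}_{\mathcal{C}}$ is a bijection. Being a Borel subset of a Polish space, $\mathcal{B}$ carries a standard Borel structure, and $\Phi \restriction_{\mathcal{B}}$ is an injective Borel map into the Polish space $\mathcal{K}(\mathcal{C})$. By the Lusin--Suslin theorem, the image $\Phi(\mathcal{B}) = \mathcal{K}_{\mathcal{C}}$ is Borel in $\mathcal{K}(\mathcal{C})$ and $\Phi \restriction_{\mathcal{B}}$ is in fact a Borel isomorphism of $\mathcal{B}$ onto $\mathcal{K}_{\mathcal{C}}$; in particular its inverse is Borel measurable.

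Setting $T_0 := \bigl( \Phi \restriction_{\mathcal{B}} \bigr)^{-1}$, viewed as a map into $\mathcal{E}(\mathcal{C},\mathcal{C})$ through the inclusion $\mathcal{B} \hookrightarrow \mathcal{E}(\mathcal{C},\mathcal{C})$, produces a Borel measurable mapping $T_0 \colon \mathcal{K}_{\mathcal{C}} \to \mathcal{E}(\mathcal{C},\mathcal{C})$, and the identity $T_0(K)(\mathcal{C}) = K$ for every $K \in \mathcal{K}_{\mathcal{C}}$ holds by construction. The only delicate step is the appeal to Lusin--Suslin; but its hypotheses --- standard Borel domain, injectivity, and Borel (indeed continuous) measurability of $\Phi \restriction_{\mathcal{B}}$ --- are all immediate from Lemma \ref{BorelTransverzal} and the continuity of $\Phi$, so I do not anticipate any serious obstacle.
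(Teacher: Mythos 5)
Your proposal is correct and follows essentially the same route as the paper: restrict the continuous evaluation map $f \mapsto f(\mathcal{C})$ to the Borel transversal $\mathcal{B}$ from Lemma \ref{BorelTransverzal}, note it is an injective Borel map, and invoke the Lusin--Suslin theorem (the paper cites \cite[Corollary 15.2]{Kechris}) to conclude that $\mathcal{K}_{\mathcal{C}}$ is Borel and the inverse is Borel measurable. No substantive differences.
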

\begin{proof}
Let $\mathcal{B} \subseteq \mathcal{E}(\mathcal{C}, \mathcal{C})$ be the Borel set given by Lemma \ref{BorelTransverzal}. Define a mapping $\Phi \colon \mathcal{B} \to \mathcal{K}(\mathcal{C})$ by $\Phi(f)=f(\mathcal{C})$. Lemma \ref{CtsFunctionAndCompactSetMapstoImageIsCts} implies that $\Phi$ is continuous. In particular, $\Phi$ is Borel measurable. Moreover, $\Phi$ is injective and $\Phi(\mathcal{B})=\mathcal{K}_{\mathcal{C}}$. Hence, $\mathcal{K}_{\mathcal{C}}$ is Borel and the mapping $\Phi^{-1} \colon \mathcal{K}_{\mathcal{C}} \to \mathcal{E}(\mathcal{C}, \mathcal{C})$ is Borel measurable (see, e.g., \cite[Corollary 15.2]{Kechris}). Let $T_0 := \Phi^{-1}$.
\end{proof}

\begin{theorem}\label{CantorImagesThm}
There is a Borel measurable mapping $T \colon \mathcal{K}(Q) \to C(\mathcal{C},Q)$ such that for every $K \in \mathcal{K}(Q)$, the image of $\mathcal{C}$ under $T(K)$ is equal to $K$.
\end{theorem}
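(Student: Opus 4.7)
The plan is to assemble the three lemmata from this section, following the strategy announced at the beginning of the section. Fix once and for all a continuous surjection $\varphi \colon \mathcal{C} \to Q$ as in Lemma \ref{PreimagesOfCompactaAreHomeoToCantor}, so that $\varphi^{-1}(K) \in \mathcal{K}_{\mathcal{C}}$ for every $K \in \mathcal{K}(Q)$. Let $T_0 \colon \mathcal{K}_{\mathcal{C}} \to \mathcal{E}(\mathcal{C},\mathcal{C})$ be the Borel measurable selector from Lemma \ref{CantorImagesLemma}, which satisfies $T_0(L)(\mathcal{C}) = L$ for every $L \in \mathcal{K}_{\mathcal{C}}$. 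The definition will then be
\[ T(K) := \varphi \circ T_0\bigl(\varphi^{-1}(K)\bigr), \qquad K \in \mathcal{K}(Q). \]

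First I would verify that $T(K)$ has the required surjectivity property. Since $T_0(\varphi^{-1}(K))$ is an embedding of $\mathcal{C}$ into $\mathcal{C}$ whose image is $\varphi^{-1}(K)$, we immediately get
\[ T(K)(\mathcal{C}) = \varphi\bigl(T_0(\varphi^{-1}(K))(\mathcal{C})\bigr) = \varphi\bigl(\varphi^{-1}(K)\bigr) = K, \]
where the last equality uses that $\varphi$ is surjective. In particular $T(K) \in C(\mathcal{C}, Q)$.

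Next I would check Borel measurability of $T$ by factoring it through three maps. The preimage map $K \mapsto \varphi^{-1}(K)$ from $\mathcal{K}(Q)$ to $\mathcal{K}(\mathcal{C})$ is Borel measurable by Lemma \ref{CompactMapstoPreimageIsBorel} applied with the fixed function $\varphi$ in the first coordinate (note that $\varphi^{-1}(K) \neq \emptyset$ holds for every $K$ because $K \neq \emptyset$ and $\varphi$ is surjective). By Lemma \ref{PreimagesOfCompactaAreHomeoToCantor} this map takes values in the Borel set $\mathcal{K}_{\mathcal{C}}$, so composing with the Borel measurable map $T_0$ from Lemma \ref{CantorImagesLemma} yields a Borel measurable map $\mathcal{K}(Q) \to \mathcal{E}(\mathcal{C},\mathcal{C}) \subseteq C(\mathcal{C},\mathcal{C})$. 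Finally, the map $C(\mathcal{C},\mathcal{C}) \to C(\mathcal{C},Q)$ given by $g \mapsto \varphi \circ g$ is continuous: if $g_n \to g$ uniformly then $\varphi \circ g_n \to \varphi \circ g$ uniformly by the uniform continuity of $\varphi$ on the compact space $\mathcal{C}$. Composing, $T$ is Borel measurable.

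There is no real obstacle here; the work was already done in establishing Lemma \ref{PreimagesOfCompactaAreHomeoToCantor} (providing a parameter-independent domain for the resulting continuous surjections) and Lemma \ref{CantorImagesLemma} (the Burgess-based selector). The only mild point to keep an eye on is ensuring that the preimage map lands inside $\mathcal{K}_{\mathcal{C}}$ so that $T_0$ is applicable, which is exactly what Lemma \ref{PreimagesOfCompactaAreHomeoToCantor} was designed to guarantee.
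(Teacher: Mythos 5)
Your proposal is correct and follows essentially the same route as the paper: both define $T(K) = \varphi \circ T_0\bigl(\varphi^{-1}(K)\bigr)$ and verify Borel measurability by factoring $T$ as the composition of the preimage map (Borel by Lemma \ref{CompactMapstoPreimageIsBorel}), the selector $T_0$, and postcomposition with $\varphi$ (continuous). The surjectivity check via $\varphi(\varphi^{-1}(K)) = K$ is likewise identical.
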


\begin{proof}
Let $\varphi \colon \mathcal{C} \to Q$ and $T_0 \colon \mathcal{K}_{\mathcal{C}} \to \mathcal{E}(\mathcal{C}, \mathcal{C})$ be the mappings guaranteed by Lemmata \ref{PreimagesOfCompactaAreHomeoToCantor} and \ref{CantorImagesLemma} respectively. Define mappings $\Theta \colon \mathcal{E}(\mathcal{C}, \mathcal{C}) \to C(\mathcal{C},Q)$ and $\Psi \colon \mathcal{K}(Q) \to \mathcal{K}(\mathcal{C})$ by $\Theta (f) = \varphi \circ f$ and $\Psi (K) = \varphi^{-1}(K)$. It is clear that $\Theta$ is continuous and, by Lemma \ref{CompactMapstoPreimageIsBorel}, $\Psi$ is Borel measurable. Also, note that $\Psi \big( \mathcal{K}(Q) \big) \subseteq \mathcal{K}_{\mathcal{C}}$. Finally, define $T:= \Theta \circ T_0 \circ \Psi$. Then $T$ is Borel measurable and it is easy to see that for every $K \in \mathcal{K}(Q)$, the image of $\mathcal{C}$ under $T(K)$ is equal to $\varphi \big( \varphi^{-1}(K) \big) =K$.
\end{proof}

\section{The Hahn-Mazurkiewicz Theorem in a Borel\\measurable way}

The aim of this section is to prove the existence of a Borel measurable mapping $\Phi \colon \mathsf{LC} (Q) \to C(I,Q)$ such that for every $K \in \mathsf{LC} (Q)$, the image of $I$ under $\Phi (K)$ is equal to $K$. Our basic strategy is to go through the proof of the Hahn-Mazurkiewicz theorem and verify that every step can be carried out in a Borel fashion.

The following definition and theorem can be found in \cite[p. 120]{Nadler}

\begin{definition}\label{DefinitionOfPropertyS}
A nonempty subset $X$ of a metric space $Y$ is said to have property S provided that for each $\varepsilon > 0$ there is $n \in \N$ and connected sets $A_1, \dotsc , A_n \subseteq Y$ such that $A_1 \cup \dots \cup A_n = X$ and $\mathrm{diam} (A_i) < \varepsilon$ for every $i = 1, \dotsc , n$.
\end{definition}

\begin{theorem}\label{LCiffPropertyS}
A nonempty compact subset of a metric space is locally connected if and only if it has property S.
\end{theorem}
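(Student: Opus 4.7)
The plan is to prove both directions separately, with the forward implication being a routine compactness argument and the reverse implication requiring one short but slightly clever observation.

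For the forward direction, assume $X$ is a nonempty compact subset of the metric space $Y$ and $X$ is locally connected. Given $\varepsilon > 0$, I would use local connectedness to find, for every $x \in X$, a connected neighborhood $V_x$ of $x$ in $X$ contained in the open $(\varepsilon/3)$-ball around $x$, so that $\mathrm{diam}(V_x) \leq 2\varepsilon/3 < \varepsilon$. The interiors $\mathrm{int}_X(V_x)$ form an open cover of $X$, so by compactness finitely many, say $\mathrm{int}_X(V_{x_1}),\dotsc,\mathrm{int}_X(V_{x_n})$, already cover $X$. Then the sets $A_i := V_{x_i}$ are connected subsets of $Y$ with $A_1 \cup \dots \cup A_n = X$ and $\mathrm{diam}(A_i) < \varepsilon$, as required.

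For the reverse direction, assume $X$ is nonempty, compact, and has property S. Fix $x \in X$ and $\varepsilon > 0$; I will exhibit a connected neighborhood of $x$ in $X$ of diameter less than $\varepsilon$. Apply property S with $\varepsilon/4$ to obtain connected $A_1,\dotsc,A_n$ covering $X$, each of diameter $< \varepsilon/4$. Replacing each $A_i$ by its closure $\overline{A_i}$ (taken in $Y$, equivalently in $X$ since $X$ is closed in $Y$) keeps them connected and of diameter $\leq \varepsilon/4$, and they still cover $X$. Let $I := \{ i : x \in \overline{A_i} \}$ and set
\[ B := \bigcup_{i \in I} \overline{A_i}. \]
Then $B$ is closed in $X$ and is connected as a finite union of connected sets sharing the point $x$, and $\mathrm{diam}(B) \leq \varepsilon/2 < \varepsilon$.

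The key observation is that $B$ is actually a neighborhood of $x$ in $X$. Indeed, the set $C := \bigcup_{j \notin I} \overline{A_j}$ is a finite union of closed sets none of which contains $x$, so $C$ is closed in $X$ and $x \notin C$; hence there exists an open (in $Y$) neighborhood $V$ of $x$ with $V \cap C = \emptyset$. Since $\{\overline{A_k}\}_{k=1}^n$ still covers $X$, every point of $V \cap X$ lies in some $\overline{A_k}$, and by the choice of $V$ this forces $k \in I$; thus $V \cap X \subseteq B$, confirming that $B$ is a neighborhood of $x$ in $X$. This shows $X$ is locally connected at $x$. The only mild subtlety is the closure step and the neighborhood argument in the last paragraph — everything else is bookkeeping.
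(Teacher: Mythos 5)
Your argument is essentially the standard proof of this equivalence; the paper itself gives no proof and simply cites Nadler, whose argument runs along the same lines (compactness plus local connectedness in one direction, small connected neighborhoods assembled from the covering sets in the other). The forward direction is fine as written.

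In the reverse direction there is one elided step. What your construction produces at each point $x$ is a \emph{closed} connected neighborhood $B$ of $x$ in $X$ of diameter less than $\varepsilon$; it is not open, and so what you have actually shown is that $X$ is connected im kleinen at every point, not directly that each point has a base of \emph{open} connected neighborhoods. The closing sentence ``this shows $X$ is locally connected at $x$'' therefore silently invokes the classical fact that a space which is connected im kleinen at each of its points is locally connected --- and one should be careful here, because the pointwise versions of the two properties are genuinely inequivalent; only the global implication holds. The fix is one line: if $U$ is open in $X$ and $C$ is a component of $U$, then each $x \in C$ admits (by your construction, with $\varepsilon$ small enough that the closed $\varepsilon$-neighborhood of $x$ in $X$ lies in $U$) a connected neighborhood $N \subseteq U$; since $N$ is connected and contains $x$, it lies in $C$, so $x$ is interior to $C$ and $C$ is open. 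With that remark added the proof is complete; everything else --- the passage to closures, the choice of the index set $I$, and the separation of $x$ from $\bigcup_{j \notin I} \overline{A_j}$ --- is correct.
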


\begin{corollary}\label{compactLCiffPropertyS}
A nonempty compact subset $X$ of a metric space $Y$ is locally connected if and only if for each $\varepsilon > 0$ there is $n \in \N$ and continua $K_1, \dotsc , K_n \subseteq Y$ such that $K_1 \cup \dots \cup K_n = X$ and $\mathrm{diam} (K_i) < \varepsilon$ for every $i = 1, \dotsc , n$.
\end{corollary}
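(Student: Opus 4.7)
The plan is to derive both implications directly from Theorem \ref{LCiffPropertyS}. The backward implication will be immediate: every continuum is connected, so if $X$ is covered by finitely many continua of diameter less than $\varepsilon$, the very same cover witnesses property S for $X$, and Theorem \ref{LCiffPropertyS} then yields that $X$ is locally connected.

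For the forward implication I would fix $\varepsilon > 0$ and apply Theorem \ref{LCiffPropertyS} to obtain connected sets $A_1, \dotsc, A_n \subseteq Y$ of diameter less than $\varepsilon$ whose union is $X$. I would then replace each $A_i$ by its closure $K_i := \overline{A_i}$ taken in $Y$ and verify that the resulting family has the desired properties. Since $A_1 \cup \dotsb \cup A_n = X$, each $A_i$ is actually contained in $X$; and because $X$ is compact (hence closed in $Y$), each $K_i$ lies in $X$ as well, so $K_i$ is a closed subset of a compact space and is therefore compact. Closure preserves connectedness and, in a metric space, preserves diameter, so each $K_i$ is a continuum in $Y$ with $\mathrm{diam}(K_i) = \mathrm{diam}(A_i) < \varepsilon$; the paper's convention that the empty space is not connected guarantees that $A_i$, and hence $K_i$, is nonempty. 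Finally $\bigcup_i K_i$ is wedged between $\bigcup_i A_i = X$ and $X$ itself, so $\bigcup_i K_i = X$.

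I do not anticipate any real obstacle here; the argument is a routine refinement of property S that exploits only the closedness of $X$ in $Y$ (to keep the closures inside $X$) and the compactness of $X$ (to make them automatically compact), together with the elementary facts that closure preserves both connectedness and diameter in a metric space.
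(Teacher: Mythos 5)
Your argument is correct and is essentially the paper's own proof: the ``if'' direction follows immediately from Theorem \ref{LCiffPropertyS} since continua are connected, and the ``only if'' direction takes the connected sets from property S and replaces each by its closure. The extra verifications you supply (closures stay inside the compact, hence closed, set $X$; closure preserves connectedness and diameter; nonemptiness from the paper's convention) are exactly the routine details the paper leaves implicit.
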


\begin{proof}
The ``if" part immediately follows from Theorem \ref{LCiffPropertyS}. To prove the ``only if" part assume that $X$ is locally connected. By Theorem \ref{LCiffPropertyS}, $X$ has property S. Let $\varepsilon > 0$ be arbitrary and let $A_1, \dotsc , A_n \subseteq Y$ be the corresponding connected sets given by the property S. For each $i = 1, \dotsc , n$, let $K_i = \overline{A_i}$.
\end{proof}

Let us denote by $\mathcal{Z}$ the Polish space of all finite sequences of continua in $Q$. Formally speaking, $\mathcal{Z}$ is the topological sum
\[ \bigoplus_{n \in \N} \big( \mathsf{C} (Q) \big)^n . \]
Since $Q$ is compact, so is $\mathcal{K}(Q)$. Thus, being a closed subspace of $\mathcal{K}(Q)$, the space $\mathsf{C} (Q)$ is compact and so are its powers. Therefore, $\mathcal{Z}$ is $\sigma$-compact.

Let $\varrho$ be a compatible metric on $Q$. This metric will be fixed from now on (in this chapter).

\begin{theorem}\label{PeanoContinuumCanBeCoveredBySmallContinua}
 There is a Borel measurable mapping $\Psi \colon \mathsf{LC} (Q) \times {\R}^+ \to \mathcal{Z}$ such that for every $X \in \mathsf{LC} (Q)$ and $\varepsilon \in {\R}^+$, if $\, \Psi (X, \varepsilon) = (K_1 , \dotsc , K_n)$, then $K_1 \cup \dots \cup K_n = X$ and $\mathrm{diam}_{\varrho} (K_i) < \varepsilon$ for each $i = 1, \dotsc , n$.
\end{theorem}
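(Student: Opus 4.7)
The strategy is to partition $\mathsf{LC}(Q) \times \R^+$ into countably many pairwise disjoint Borel pieces on each of which both the arity $n$ of the decomposition and a positive buffer $1/k$ below $\varepsilon$ are fixed, then select a decomposition on each piece via the Kuratowski--Ryll-Nardzewski selection theorem applied to a closed-valued Borel-measurable multifunction.

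Concretely, for each $n,k \in \N$ I would set
\[ G_{n,k} := \Big\lbrace (X,\varepsilon,(K_1,\dots,K_n)) \in \mathsf{LC}(Q) \times [1/k,\infty) \times (\mathsf{C}(Q))^n : \bigcup_{i=1}^n K_i = X \textup{ and } \max_{i} \mathrm{diam}_{\varrho}(K_i) \leq \varepsilon - 1/k \Big\rbrace. \]
Continuity of the union operation $(K_1,\dots,K_n) \mapsto K_1 \cup \dots \cup K_n$ on $(\mathcal{K}(Q))^n$ (readily checked from the subbase for the Vietoris topology) together with continuity of $\mathrm{diam}_{\varrho}$ on $\mathcal{K}(Q)$ shows $G_{n,k}$ is closed. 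Let $V_{n,k}$ be the projection of $G_{n,k}$ onto $\mathsf{LC}(Q) \times \R^+$. Since $(\mathsf{C}(Q))^n$ is compact (as a closed subset of the compact space $(\mathcal{K}(Q))^n$), $V_{n,k}$ is closed, and in particular Borel. Corollary \ref{compactLCiffPropertyS} ensures that $\bigcup_{n,k} V_{n,k} = \mathsf{LC}(Q) \times \R^+$: given any $(X,\varepsilon)$, take an $n$-decomposition of $X$ with $\max_i \mathrm{diam}_{\varrho}(K_i) < \varepsilon$ and choose $k$ so large that $1/k < \varepsilon - \max_i \mathrm{diam}_{\varrho}(K_i)$. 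Fixing a bijection $j \mapsto (n_j,k_j)$ from $\N$ onto $\N\times\N$ and setting $W_j := V_{n_j,k_j} \setminus \bigcup_{i < j} V_{n_i,k_i}$ yields a pairwise disjoint Borel cover of $\mathsf{LC}(Q) \times \R^+$.

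On each $W_j$, Lemma \ref{ClosedSetInProductOfPolishAndCompact} applied to the closed set $G_{n_j,k_j} \cap \big( W_j \times (\mathsf{C}(Q))^{n_j} \big)$ inside $W_j \times (\mathsf{C}(Q))^{n_j}$ shows that the multifunction
\[ \Phi_j(X,\varepsilon) := \big\lbrace (K_1,\dots,K_{n_j}) \in (\mathsf{C}(Q))^{n_j} : (X,\varepsilon,(K_1,\dots,K_{n_j})) \in G_{n_j,k_j} \big\rbrace \]
is Borel measurable (in fact upper semicontinuous) with non-empty compact values. The Kuratowski--Ryll-Nardzewski selection theorem then produces a Borel selector $s_j \colon W_j \to (\mathsf{C}(Q))^{n_j}$, and defining $\Psi(X,\varepsilon) := s_j(X,\varepsilon)$ for $(X,\varepsilon) \in W_j$ yields the desired Borel mapping into $\mathcal{Z} = \bigoplus_n (\mathsf{C}(Q))^n$, since each summand is clopen in $\mathcal{Z}$ and Borel measurability can be checked on each preimage separately. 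The key design choice, and the main subtlety I expect in the proof, is to replace the open constraint $\max_i \mathrm{diam}_{\varrho}(K_i) < \varepsilon$ by the nested family of closed constraints $\leq \varepsilon - 1/k$: this is precisely what keeps $G_{n,k}$ closed, makes its projection $V_{n,k}$ Borel, and leaves $\Phi_j$ compact-valued while still guaranteeing the strict diameter bound $<\varepsilon$ at the output.
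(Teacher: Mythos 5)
Your proposal is correct, and it reaches the conclusion by a genuinely different selection theorem than the paper. The paper works with the single set $\mathcal{A} = \big\lbrace (X,\varepsilon,\Gamma) \in \mathsf{C}(Q) \times \R^+ \times \mathcal{Z} \, ; \ f(\Gamma)=X, \ \mathrm{diam}_{\varrho}(\Gamma_i) \leq \varepsilon \big\rbrace$, which is closed in the Polish space $\mathsf{C}(Q) \times \R^+ \times \mathcal{Z}$ and has $\sigma$-compact (rather than compact) sections in $\mathcal{Z}$ because the arity is not fixed; it then invokes the Arsenin--Kunugui uniformization theorem and converts the non-strict bound into a strict one by the substitution $\varepsilon \mapsto \varepsilon/2$. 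You instead stratify by the arity $n$ and a buffer $1/k$, which makes each section compact and the fibered set upper semicontinuous, so the much more elementary Kuratowski--Ryll-Nardzewski theorem suffices; your $\varepsilon - 1/k$ device plays exactly the role of the paper's $\varepsilon/2$ rescaling. What Arsenin--Kunugui buys the paper is brevity (no stratification, no disjointification, no gluing); what your route buys is avoiding a deep uniformization theorem at the cost of bookkeeping. One small imprecision to repair: since $\mathsf{LC}(Q)$ is only Borel (not closed) in $\mathcal{K}(Q)$, your $G_{n,k}$ is only relatively closed, so $V_{n,k}$ is Borel rather than closed, and Lemma \ref{ClosedSetInProductOfPolishAndCompact} as stated wants a closed set over a Polish base. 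The clean fix is the one the paper's definition of $\mathcal{A}$ already embodies: drop the local connectedness requirement from $G_{n,k}$ (define it over $\mathsf{C}(Q)$, or even $\mathcal{K}(Q)$, in the first coordinate), obtain a genuinely closed set and a Borel multifunction on its closed projection, and only afterwards intersect everything with $\mathsf{LC}(Q) \times \R^+$, where Corollary \ref{compactLCiffPropertyS} guarantees nonemptiness. With that adjustment your argument is complete.
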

\begin{proof}
For every $\Gamma \in \mathcal{Z}$, there is a unique natural number $l(\Gamma)$ such that $\Gamma \in \big( \mathsf{C} (Q) \big)^{l(\Gamma)}$. Using this notation, define a mapping $f \colon \mathcal{Z} \to \mathcal{K} (Q)$ by
\[ f(\Gamma) = \bigcup_{i=1}^{l(\Gamma)} \Gamma_i .\]
Since $\big( \mathsf{C} (Q) \big)^k$ is an open subset of $\mathcal{Z}$ for every $k \in \N$, it is easy to see that $f$ is continuous. Define a function $g \colon \mathsf{C} (Q) \to \R$ by $g(K)=\mathrm{diam}_{\varrho} (K)$. Clearly, $g$ is continuous as well. Finally, define
\[ \mathcal{A} := \Big\lbrace (X , \varepsilon , \Gamma) \in \mathsf{C} (Q) \times {\R}^+ \times \mathcal{Z} \, ; \ f(\Gamma)  = X , \ g ( \Gamma_i ) \leq \varepsilon \textup{ for } i = 1, \dotsc , l(\Gamma) \Big\rbrace . \]
The continuity of $f$ and $g$ shows that $\mathcal{A}$ is a closed subset of the Polish space $\mathsf{C} (Q) \times {\R}^+ \times \mathcal{Z}$. In particular, $\mathcal{A}$ is a Borel set with $\sigma$-compact sections in $\mathcal{Z}$. By the Arsenin-Kunugui selection theorem \cite[Theorem 35.46]{Kechris}, the set $\pi (\mathcal{A}) := \big\lbrace (X, \varepsilon) \in \mathsf{C} (Q) \times {\R}^+ \, ; \ \exists \, \Gamma \in \mathcal{Z} : (X , \varepsilon , \Gamma) \in \mathcal{A} \big\rbrace$ is Borel in $\mathsf{C} (Q) \times {\R}^+$ and there exists a Borel measurable map $\widehat{\Psi} \colon \pi (\mathcal{A}) \to \mathcal{Z}$ such that $\big( X, \varepsilon , \widehat{\Psi} (X, \varepsilon) \big) \in \mathcal{A}$ for every $(X, \varepsilon) \in \pi (\mathcal{A})$. By Corollary \ref{compactLCiffPropertyS}, we have $\mathsf{LC} (Q) \times {\R}^+ \subseteq \pi (\mathcal{A})$. The desired mapping $\Psi \colon \mathsf{LC} (Q) \times {\R}^+ \to \mathcal{Z}$ can therefore be defined by $\Psi (X, \varepsilon) = \widehat{\Psi} (X, \frac{\varepsilon}{2})$.
\end{proof}

The very definition of property S clearly implies the next proposition.

\begin{proposition}\label{ClosureOfPropertyS}
Let $X$ be a nonempty subset of a metric space $Y$. If $X$ has property S, then so does the closure of $X$.
\end{proposition}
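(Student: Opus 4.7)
The plan is to exploit two elementary facts: the closure of a connected set is connected, and closure commutes with \emph{finite} unions. Given $\varepsilon > 0$, I will first apply the property S of $X$ not to $\varepsilon$ itself but to a strictly smaller number, say $\varepsilon/2$, to obtain connected sets $A_1, \ldots, A_n \subseteq Y$ with $A_1 \cup \cdots \cup A_n = X$ and $\mathrm{diam}(A_i) < \varepsilon/2$ for each $i$.

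Next, I will pass to closures: set $B_i := \overline{A_i}$ for $i = 1, \ldots, n$. Each $B_i$ is connected because the closure of a connected subset of a topological space is connected, and $\mathrm{diam}(B_i) = \mathrm{diam}(A_i) \leq \varepsilon/2 < \varepsilon$, since in a metric space the diameter of a set equals the diameter of its closure. Finally, using the fact that the closure of a finite union equals the union of the closures, I obtain
\[ B_1 \cup \cdots \cup B_n = \overline{A_1} \cup \cdots \cup \overline{A_n} = \overline{A_1 \cup \cdots \cup A_n} = \overline{X}. \]
Since $\overline{X}$ is nonempty whenever $X$ is, this verifies property S for $\overline{X}$.

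There is no real obstacle: the only reason to shrink $\varepsilon$ to $\varepsilon/2$ before invoking property S of $X$ is to convert the strict inequality $\mathrm{diam}(A_i) < \varepsilon/2$ into the strict inequality $\mathrm{diam}(\overline{A_i}) < \varepsilon$, since taking closures only guarantees a non-strict bound on diameters.
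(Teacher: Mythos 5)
Your proof is correct and is exactly the routine closure-of-connected-sets argument the paper has in mind when it says the proposition follows from the very definition of property S (the paper gives no explicit proof). The only remark: since, as you yourself note, $\mathrm{diam}(\overline{A_i})=\mathrm{diam}(A_i)$ in a metric space, the strict bound $<\varepsilon$ is already preserved and the preliminary shrinking to $\varepsilon/2$ is harmless but unnecessary.
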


Define $\mathcal{R}:= \big\lbrace (X,A,\varepsilon) \in \mathsf{LC} (Q) \times \mathsf{C} (Q) \times {\R}^+ ; \ A \subseteq X \big\rbrace$ and for every $(X,A,\varepsilon) \in \mathcal{R}$, let
\begin{align*}
    S(X,A,\varepsilon) := \big\lbrace x \in X \, ; \ \ &\exists \, n \in \N \ \, \exists \, D_1, \dotsc ,D_n \subseteq X: \ x \in D_n \, , \ A \cap D_1 \neq \emptyset \, ,\\
    &\forall \, i \in \{ 2, \dotsc , n \} : D_{i-1} \cap D_i \neq \emptyset \, ,\\
    &\forall \, i \in \{ 1, \dotsc , n \} : D_i \textup{ is connected},\\
    &\forall \, i \in \{ 1, \dotsc , n \} : \mathrm{diam}_{\varrho} (D_i) < \varepsilon \cdot 2^{-i} \big\rbrace .
\end{align*}
For every $(X,A,\varepsilon) \in \mathcal{R}$, it easily follows from the compactness of $X$ that
\begin{align*}
    S(X,A,\varepsilon) = \big\lbrace x \in X \, ; \ \ &\exists \, n \in \N \ \, \exists \, K_1, \dotsc ,K_n \in \mathsf{C}(Q):\\
    &x \in K_n \, , \ A \cap K_1 \neq \emptyset \, ,\\
    &\forall \, i \in \{ 2, \dotsc , n \} : K_{i-1} \cap K_i \neq \emptyset \, ,\\
    &\forall \, i \in \{ 1, \dotsc , n \} : K_i \subseteq X ,\\
    &\forall \, i \in \{ 1, \dotsc , n \} : \mathrm{diam}_{\varrho} (K_i) < \varepsilon \cdot 2^{-i} \big\rbrace .
\end{align*}
The following proposition is a consequence of \cite[8.7, 8.8]{Nadler} combined with Theorem \ref{LCiffPropertyS} and Proposition \ref{ClosureOfPropertyS}.

\begin{proposition}\label{SXAepsilonIsLC}
For each $(X,A,\varepsilon) \in \mathcal{R}$, the set $\overline{S(X,A,\varepsilon)}$ is a Peano continuum and we have $\mathrm{diam}_{\varrho} \big( S(X,A,\varepsilon) \big) \leq 2 \varepsilon + \mathrm{diam}_{\varrho} (A)$.
\end{proposition}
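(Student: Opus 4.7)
The plan is to first establish the diameter estimate, then verify that $\overline{S(X,A,\varepsilon)}$ is compact, connected, and locally connected in turn. For the diameter bound, given $x\in S(X,A,\varepsilon)$ with witnessing chain $K_1,\dotsc,K_n$ of continua, I pick $a\in A\cap K_1$ and intermediate points $p_i\in K_{i-1}\cap K_i$. The successive hops $a\to p_2\to\dotsb\to p_n\to x$ each remain inside one of the $K_i$'s, so summing diameters gives
\[ \varrho(a,x)\le\sum_{i=1}^{n}\mathrm{diam}_{\varrho}(K_i)<\sum_{i=1}^{\infty}\varepsilon\cdot 2^{-i}=\varepsilon. \]
Thus every point of $S(X,A,\varepsilon)$ lies within $\varepsilon$ of $A$, and the triangle inequality applied to two such points with their anchors in $A$ gives the claimed bound $2\varepsilon+\mathrm{diam}_{\varrho}(A)$.

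Next, $\overline{S(X,A,\varepsilon)}$ is compact and metrizable as a closed subset of compact $X$. To see that $A\subseteq S(X,A,\varepsilon)$, I use local connectedness of $X$: for each $a\in A$ a continuum $K_1\subseteq X$ with $a\in K_1$ and $\mathrm{diam}_{\varrho}(K_1)<\varepsilon/2$ forms a valid one-link chain. Each chain union $K_1\cup\dotsb\cup K_n$ is connected (consecutive links overlap), lies in $S(X,A,\varepsilon)$ (initial segments of chains are chains), and meets $A$. Since $A\subseteq S(X,A,\varepsilon)$ is itself connected, $S(X,A,\varepsilon)$ is a union of connected sets all containing $A$, hence is connected, and therefore so is $\overline{S(X,A,\varepsilon)}$.

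To show $\overline{S(X,A,\varepsilon)}$ is locally connected, by Proposition~\ref{ClosureOfPropertyS} it suffices to verify that $S(X,A,\varepsilon)$ has property~S. Given $\delta>0$, I choose $k$ with $\varepsilon\cdot 2^{-k+1}<\delta$. The key observation is that any witnessing chain $K_1,\dotsc,K_n$ for a point $x\in S(X,A,\varepsilon)$ can be extended to length at least $k$: local connectedness of $X$ supplies a continuum $K_{n+1}\subseteq X$ with $K_{n+1}\cap K_n\ni x$ and $\mathrm{diam}_{\varrho}(K_{n+1})<\varepsilon\cdot 2^{-(n+1)}$, and I iterate. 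Once $n\ge k$, the tail $K_k\cup\dotsb\cup K_n$ is a connected subset of $S(X,A,\varepsilon)$ containing $x$ with diameter less than $\sum_{i\ge k}\varepsilon\cdot 2^{-i}<\delta$.

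The main obstacle is extracting a finite subcover by such small connected subsets. The set $\mathcal{F}_k$ of valid length-$k$ prefixes is compact in $\mathsf{C}(X)^k$, and for each such prefix the ``extended tail'' $T(K_1,\dotsc,K_k)$ consisting of all points of $S(X,A,\varepsilon)$ whose chains begin with this prefix is open in $X$ (by the same extension argument) and has diameter $<3\varepsilon\cdot 2^{-k}$ by the diameter bound applied to the sub-problem with $K_k$ replacing $A$. These tails form an open cover of $S(X,A,\varepsilon)$, but not a priori of $\overline{S(X,A,\varepsilon)}$. The delicate step is handling boundary points $y\in\overline{S(X,A,\varepsilon)}\setminus S(X,A,\varepsilon)$ via a limiting argument: approximate $y$ by $x_n\in S(X,A,\varepsilon)$ whose length-$k$ prefixes converge in the compact $\mathcal{F}_k$ to some $(K_1^\infty,\dotsc,K_k^\infty)$, and argue that $y\in\overline{T(K_1^\infty,\dotsc,K_k^\infty)}$. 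Combined with compactness of $\overline{S(X,A,\varepsilon)}$, this yields finitely many continua of diameter $<\delta$ covering $\overline{S(X,A,\varepsilon)}$, establishing property~S.
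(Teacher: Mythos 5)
Your treatment of the diameter bound, of the connectedness of $S(X,A,\varepsilon)$ (via $A\subseteq S(X,A,\varepsilon)$ and the fact that every chain union is a connected subset of $S(X,A,\varepsilon)$ meeting $A$), and your reduction of local connectedness of the closure to property S of $S(X,A,\varepsilon)$ itself via Proposition \ref{ClosureOfPropertyS} and Theorem \ref{LCiffPropertyS} are all correct. Note that the paper gives no proof of this proposition at all: it delegates exactly the hard part to \cite[8.7, 8.8]{Nadler}, so you are in effect reproving Nadler's lemma. Your observation that each tail $T(K_1,\dotsc,K_k)$ is a connected subset of $S(X,A,\varepsilon)$ of diameter less than $3\varepsilon\cdot2^{-k}$ and that these tails cover $S(X,A,\varepsilon)$ is also fine.

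The gap is the extraction of a \emph{finite} subfamily, and neither of the two devices you propose closes it. First, $\mathcal{F}_k$ is not compact: the conditions $\mathrm{diam}_{\varrho}(K_i)<\varepsilon\cdot 2^{-i}$ are strict, hence open in the Vietoris topology, so a limit of valid prefixes need not be a valid prefix. (This is precisely the phenomenon the paper must work around in the proof of Proposition \ref{BorelSXAepsilon}, where the sets $S_n$ are defined with non-strict inequalities and $S$ is recovered as a union over the rescaled parameters $\tfrac{k}{k+1}\varepsilon$.) Second, compactness of $\overline{S(X,A,\varepsilon)}$ cannot be brought to bear: $S(X,A,\varepsilon)$ is in general a proper, non-closed subset of its closure (already for $X=I$ and $A$ a singleton it is a half-open interval), the tails are only known to cover $S(X,A,\varepsilon)$, and your limiting argument for a boundary point $y=\lim x_n$ yields only $\mathrm{dist}_{\varrho}(y,K_k^{\infty})\le\varepsilon\cdot2^{-k}$, not $y\in\overline{T(K_1^{\infty},\dotsc,K_k^{\infty})}$; moreover, by Definition \ref{DefinitionOfPropertyS} the finitely many connected sets must have union exactly equal to the set in question, so closures of tails (which may leave $S(X,A,\varepsilon)$) could not be used in any case. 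The missing idea --- the crux of Nadler's 8.7 --- is to discretize using property S of the ambient Peano continuum $X$ rather than compactness of the hyperspace of prefixes: cover $X$ by finitely many connected sets of diameter much smaller than $\varepsilon\cdot2^{-k}$, keep only those meeting the set of points reachable by chains of length at most $k$, and verify that each retained set, together with all sufficiently fast-shrinking chain continuations emanating from it, is a connected subset of $S(X,A,\varepsilon)$ of diameter less than $\delta$, these finitely many sets covering $S(X,A,\varepsilon)$. With that replacement the rest of your argument goes through.
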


\begin{proposition}\label{BorelSXAepsilon}
The mapping $\Theta \colon \mathcal{R} \to \mathsf{LC} (Q)$ defined by
\[\Theta (X,A,\varepsilon) = \overline{S(X,A,\varepsilon)}\]
is Borel measurable.
\end{proposition}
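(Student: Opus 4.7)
The plan is to apply Lemma \ref{CharacterizationOfBorelnessIntoVietoris}: it suffices to show, for every open set $V \subseteq Q$, that $\mathcal{W}_V := \{(X,A,\varepsilon) \in \mathcal{R} : \Theta(X,A,\varepsilon) \cap V \neq \emptyset\}$ is Borel in $\mathcal{R}$. Since $V$ is open, $\overline{S(X,A,\varepsilon)} \cap V \neq \emptyset$ if and only if $S(X,A,\varepsilon) \cap V \neq \emptyset$; using the second, continuum-based characterization of $S(X,A,\varepsilon)$ stated just before the proposition, $\mathcal{W}_V$ is precisely the set of $(X,A,\varepsilon) \in \mathcal{R}$ for which there exist $n \in \N$ and $K_1, \dotsc, K_n \in \mathsf{C}(Q)$ satisfying the chain conditions together with the extra open condition $K_n \cap V \neq \emptyset$.

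For each fixed $n \in \N$, I would introduce the auxiliary set $\mathcal{B}_n \subseteq \mathcal{R} \times (\mathsf{C}(Q))^n$ consisting of all tuples $(X,A,\varepsilon,K_1,\dotsc,K_n)$ for which (a) $K_i \subseteq X$ for every $i$, (b) $A \cap K_1 \neq \emptyset$ and $K_{i-1} \cap K_i \neq \emptyset$ for $2 \leq i \leq n$, (c) $\mathrm{diam}_{\varrho}(K_i) < \varepsilon \cdot 2^{-i}$ for every $i$, and (d) $K_n \cap V \neq \emptyset$. Standard continuity of the inclusion and nonempty-intersection relations on the hyperspace shows that (a) and (b) define closed sets, while (c) is open since the diameter is continuous on $\mathcal{K}(Q)$, and (d) is open by Lemma \ref{CharacterizationOfBorelnessIntoVietoris}; thus $\mathcal{B}_n$ is Borel. (The set $\mathcal{R}$ itself is Borel, being the intersection of the Borel set $\mathsf{LC}(Q) \times \mathsf{C}(Q) \times \R^+$ with the closed condition $A \subseteq X$.)

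To finish, project $\mathcal{B}_n$ onto its $\mathcal{R}$-coordinate. Because $Q$ is compact, so is $\mathcal{K}(Q)$, and $\mathsf{C}(Q)$ being closed in $\mathcal{K}(Q)$ is compact as well; hence $(\mathsf{C}(Q))^n$ is compact. Each vertical section of $\mathcal{B}_n$ is the intersection of a closed and an open subset of this compact space, which is $F_{\sigma}$ and therefore $\sigma$-compact, so the Arsenin-Kunugui selection theorem (\cite[Theorem 35.46]{Kechris}) gives that the projection $\pi_{\mathcal{R}}(\mathcal{B}_n)$ is Borel. Consequently $\mathcal{W}_V = \bigcup_{n \in \N} \pi_{\mathcal{R}}(\mathcal{B}_n)$ is Borel, which is what we wanted. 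The only subtle point is the existential quantifier over chains of arbitrary finite length, and it is handled precisely by the compactness of $\mathsf{C}(Q)$: this is what keeps the projections Borel and prevents escape to a genuinely analytic set.
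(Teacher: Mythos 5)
Your argument is correct, but it takes a genuinely different route from the paper's. You verify criterion (iii) of Lemma \ref{CharacterizationOfBorelnessIntoVietoris} (meeting every open $V$) and eliminate the existential quantifier over finite chains by projecting the Borel set $\mathcal{B}_n \subseteq \mathcal{R} \times \big(\mathsf{C}(Q)\big)^n$, whose fibers are intersections of a closed set with an open set inside the compact space $\big(\mathsf{C}(Q)\big)^n$ and hence $\sigma$-compact, via Arsenin--Kunugui. The paper instead verifies criterion (iv) (meeting every closed $F$): for each fixed chain length $n$ it introduces the compact-valued maps $S_n(X,A,\varepsilon)$, defined with the non-strict inequality $\mathrm{diam}_{\varrho}(K^i) \leq \varepsilon \cdot 2^{-i}$ so that each $S_n(X,A,\varepsilon)$ is compact, shows by a direct sequential-compactness argument that $\big\lbrace (X,A,\varepsilon) \, ; \ F \cap S_n(X,A,\varepsilon) \neq \emptyset \big\rbrace$ is relatively closed, recovers $S(X,A,\varepsilon)$ as $\bigcup_{k,n} S_n\big(X,A,\tfrac{k}{k+1}\varepsilon\big)$ to restore the strict inequality, and concludes with Lemma \ref{closureOfUnionIsBorel}. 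Both proofs ultimately rest on the compactness of $\mathsf{C}(Q)$; yours trades the paper's elementary hands-on limit argument (and the $\tfrac{k}{k+1}\varepsilon$ device) for an appeal to a uniformization theorem, and in exchange avoids having to prove that the sets $S_n(X,A,\varepsilon)$ are compact. Two small remarks. First, Arsenin--Kunugui is heavier than necessary: the open conditions (c) and (d) define an $F_{\sigma}$ subset of the ambient product, and projection along the compact factor $\big(\mathsf{C}(Q)\big)^n$ is a closed map, so $\pi_{\mathcal{R}}(\mathcal{B}_n)$ is directly a countable union of relatively closed sets. Second, your replacement of the clause ``there is $x \in V$ with $x \in K_n$'' by ``$K_n \cap V \neq \emptyset$'' is legitimate precisely because $K_n \subseteq X$ forces any such $x$ to lie in $X$ and hence in $S(X,A,\varepsilon)$; this one-line justification is worth stating explicitly.
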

\begin{proof}
For every $n \in \N$ and $(X,A,\varepsilon) \in \mathcal{R}$, let
\begin{align*}
    S_n(X,A,\varepsilon) = \big\lbrace x \in X \, ; \ \ &\exists \, K^1, \dotsc ,K^n \in \mathsf{C}(Q): \, x \in K^n \, , \ A \cap K^1 \neq \emptyset,\\
    &\forall \, i \in \{ 2, \dotsc , n \} : K^{i-1} \cap K^i \neq \emptyset,\\
    &\forall \, i \in \{ 1, \dotsc , n \} : K^i \subseteq X ,\\
    &\forall \, i \in \{ 1, \dotsc , n \} : \mathrm{diam}_{\varrho} (K^i) \leq \varepsilon \cdot 2^{-i} \big\rbrace .
\end{align*}
\begin{claim}\label{SnXAepsilonIsCompact}
For every $n \in \N$ and $(X,A,\varepsilon) \in \mathcal{R}$, the set $S_n(X,A,\varepsilon)$ is compact.
\end{claim}
\begin{claimproof}
Since $X$ is metrizable compact, it suffices to show that $S_n(X,A,\varepsilon)$ is sequentially closed in $X$. Let $(x_k)_{k=1}^{\infty}$ be a sequence in $S_n(X,A,\varepsilon)$ converging to $x \in X$. For all $k \in \N$, let $K_k^1,\dotsc ,K_k^n \in \mathsf{C}(Q)$ be continua witnessing that $x_k \in S_n(X,A,\varepsilon)$. Since $\mathsf{C}(Q)$ is metrizable compact, we can assume that there exist $K^1,\dotsc ,K^n \in \mathsf{C}(Q)$ such that $\lim_{k \to \infty} K_k^i = K^i$ for $i=1,\dotsc ,n$. It is straightforward to verify that the continua $K^1,\dotsc ,K^n$ witness that $x \in S_n(X,A,\varepsilon)$.\claimend
\end{claimproof}
For every $n \in \N$, Claim \ref{SnXAepsilonIsCompact} shows that $S_n$ is a mapping from $\mathcal{R}$ to $\mathcal{K}(Q)$. It is easy to see that for each $(X,A,\varepsilon) \in \mathcal{R}$,
\[ S(X,A,\varepsilon) = \bigcup_{k,n \in \N} S_n \Big( \big( X,A, \tfrac{k}{k+1} \varepsilon \big) \Big), \]
therefore,
\[ \Theta (X,A,\varepsilon) = \overline{\bigcup_{k,n \in \N} S_n \Big( \big( X,A, \tfrac{k}{k+1} \varepsilon \big) \Big)}. \]
By Lemma \ref{closureOfUnionIsBorel}, this proposition will be proved once we show that each of the mappings $S_1, S_2, S_3, \dotsc$ is Borel measurable. Thus, let $n \in \N$ be fixed and let $F \subseteq Q$ be an arbitrary closed set. By Lemma \ref{CharacterizationOfBorelnessIntoVietoris}, it suffices to show that the set $\mathcal{F}:= \big\lbrace (X,A,\varepsilon) \in \mathcal{R} \, ; \ F \cap S_n(X,A,\varepsilon) \neq \emptyset \big\rbrace$ is Borel in $\mathcal{R}$. We claim that this set is actually closed (relatively) in $\mathcal{R}$. Let $\big( (X_k,A_k,\varepsilon_k) \big)_{k \in \N}$ be a sequence in $\mathcal{F}$ converging to $(X,A,\varepsilon) \in \mathcal{R}$, we are going to show that $(X,A,\varepsilon) \in \mathcal{F}$. For every $k \in \N$, fix a point $x_k \in F \cap S_n (X_k,A_k,\varepsilon_k)$ and let $K_k^1,\dotsc ,K_k^n \in \mathsf{C}(Q)$ witness that $x_k \in S_n (X_k,A_k,\varepsilon_k)$. Since $Q$ and $\mathsf{C}(Q)$ are metrizable and compact, we can assume that there exist $K^1,\dotsc ,K^n \in \mathsf{C}(Q)$ and $x \in Q$ such that $\lim_{k \to \infty} x_k = x$ and $\lim_{k \to \infty} K_k^i = K^i$ for every $i=1,\dotsc ,n$. Then $x \in F$ and it is straightforward to verify that $K^1,\dotsc ,K^n$ witness that $x \in S_n(X,A,\varepsilon)$. Thus, $(X,A,\varepsilon) \in \mathcal{F}$.
\end{proof}

Denote by $\mathcal{Z}_P$ the subspace of $\mathcal{Z}$ consisting of finite sequences of Peano continua. Formally,
\[ \mathcal{Z}_P = \big\lbrace \Gamma \in \mathcal{Z} \, ; \ \textup{if } \Gamma = (K_1,\dotsc ,K_n) \textup{, then } K_i \in \mathsf{LC}(Q) \textup{ for } i=1,\dotsc , n \big\rbrace. \]
The following theorem is a strengthening of Theorem \ref{PeanoContinuumCanBeCoveredBySmallContinua} and it is a key ingredient for the proof of the Borel version of the Hahn-Mazurkiewicz theorem.

\begin{theorem}\label{PeanoContinuumCanBeCoveredBySmallPeanoContinua}
There is a Borel measurable mapping $\Psi_P \colon \mathsf{LC} (Q) \times {\R}^+ \to \mathcal{Z}_P$ such that for every $X \in \mathsf{LC} (Q)$ and $\varepsilon \in {\R}^+$, if $\, \Psi_P (X, \varepsilon) = (K_1 , \dotsc , K_n)$, then $K_1 \cup \dots \cup K_n = X$ and $\mathrm{diam}_{\varrho} (K_i) < \varepsilon$ for each $i = 1, \dotsc , n$.
\end{theorem}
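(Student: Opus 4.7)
The plan is to reduce to the already-proved Theorem \ref{PeanoContinuumCanBeCoveredBySmallContinua} by enlarging each continuum in the resulting cover to a Peano continuum inside $X$ of controlled diameter, using the operator $\Theta$ from Proposition \ref{BorelSXAepsilon}.

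\textbf{Step 1: choose scales.} Given $(X,\varepsilon) \in \mathsf{LC}(Q) \times \R^+$, apply the map $\Psi$ from Theorem \ref{PeanoContinuumCanBeCoveredBySmallContinua} at the parameter $\varepsilon/4$ to obtain a tuple
\[ \Psi(X, \varepsilon/4) = (K_1,\dotsc,K_n) \in \mathcal{Z} \]
of continua in $X$ with $K_1 \cup \dotsb \cup K_n = X$ and $\mathrm{diam}_{\varrho}(K_i) < \varepsilon/4$ for each $i$. Then set $L_i := \Theta(X, K_i, \varepsilon/4) = \overline{S(X, K_i, \varepsilon/4)}$.

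\textbf{Step 2: verify the geometric properties.} By Proposition \ref{SXAepsilonIsLC}, each $L_i$ is a Peano continuum and
\[ \mathrm{diam}_{\varrho}(L_i) = \mathrm{diam}_{\varrho}\bigl(S(X,K_i,\varepsilon/4)\bigr) \leq 2 \cdot \tfrac{\varepsilon}{4} + \mathrm{diam}_{\varrho}(K_i) < \tfrac{\varepsilon}{2} + \tfrac{\varepsilon}{4} = \tfrac{3\varepsilon}{4} < \varepsilon. \]
Inspecting the definition of $S(X,A,\varepsilon)$, the singleton choice $n=1$, $D_1=\{x\}$ shows $K_i \subseteq S(X, K_i, \varepsilon/4) \subseteq L_i$, while every witnessing chain sits inside $X$, so $L_i \subseteq X$. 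Consequently $X = \bigcup_i K_i \subseteq \bigcup_i L_i \subseteq X$, giving the desired cover, and $(L_1,\dotsc,L_n) \in \mathcal{Z}_P$.

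\textbf{Step 3: verify Borel measurability.} Here I split the domain according to the length of the tuple returned by $\Psi$. For each $n \in \N$, let
\[ B_n := \bigl\{(X,\varepsilon) \in \mathsf{LC}(Q) \times \R^+ \, ; \ l\bigl(\Psi(X,\varepsilon/4)\bigr) = n \bigr\}. \]
Since $\bigl(\mathsf{C}(Q)\bigr)^n$ is clopen in $\mathcal{Z}$ and $\Psi$ is Borel, each $B_n$ is Borel, and on $B_n$ the coordinate projections furnish Borel maps $(X,\varepsilon) \mapsto K_i(X,\varepsilon) \in \mathsf{C}(Q)$ for $i=1,\dotsc,n$. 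Since $K_i(X,\varepsilon) \subseteq X$, the tuple $\bigl(X, K_i(X,\varepsilon), \varepsilon/4\bigr)$ lies in $\mathcal{R}$, and composing with the Borel map $\Theta$ from Proposition \ref{BorelSXAepsilon} yields a Borel map $(X,\varepsilon) \mapsto L_i(X,\varepsilon)$ on $B_n$. Defining $\Psi_P(X,\varepsilon) := \bigl(L_1(X,\varepsilon),\dotsc,L_n(X,\varepsilon)\bigr)$ on $B_n$ and gluing over the countable Borel partition $\{B_n\}_{n \in \N}$ produces the required Borel map into $\mathcal{Z}_P$.

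\textbf{Main obstacle.} The only nontrivial point is Step 2, where one must make sure that thickening the small continua $K_i$ into Peano subcontinua of $X$ does not blow up diameters past $\varepsilon$; this is exactly arranged by the factor $\tfrac14$ in the scale choice, using the sharp bound $\mathrm{diam}_{\varrho}(S(X,A,\delta)) \leq 2\delta + \mathrm{diam}_{\varrho}(A)$ from Proposition \ref{SXAepsilonIsLC}. Everything else is bookkeeping that inherits Borel measurability from the already-established Borel character of $\Psi$ and $\Theta$.
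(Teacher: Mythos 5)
Your proposal is correct and follows essentially the same route as the paper: apply Theorem \ref{PeanoContinuumCanBeCoveredBySmallContinua} at a shrunken scale and then fatten each piece with $\Theta$ from Proposition \ref{BorelSXAepsilon}, with the diameter control coming from Proposition \ref{SXAepsilonIsLC}. The only differences are cosmetic — the paper uses $\varepsilon/3$ where you use $\varepsilon/4$, and you spell out the gluing over the clopen pieces $\bigl(\mathsf{C}(Q)\bigr)^n$ of $\mathcal{Z}$ that the paper leaves implicit.
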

\begin{proof}
Let $\Psi \colon \mathsf{LC} (Q) \times {\R}^+ \to \mathcal{Z}$ be the mapping provided by Theorem \ref{PeanoContinuumCanBeCoveredBySmallContinua} and let $\Theta \colon \mathcal{R} \to \mathsf{LC} (Q)$ be the mapping studied in Proposition \ref{BorelSXAepsilon}. Define the desired mapping $\Psi_P \colon \mathsf{LC} (Q) \times {\R}^+ \to \mathcal{Z}_P$ in the following way: For any $(X, \varepsilon) \in \mathsf{LC} (Q) \times {\R}^+$, if $\Psi \big( X, \tfrac{1}{3} \varepsilon \big) = (K_1, \dotsc ,K_n)$, let
\[ \Psi_P (X, \varepsilon) = \Big( \Theta \big( X,K_1, \tfrac{1}{3} \varepsilon \big) , \dotsc ,\Theta \big( X,K_n, \tfrac{1}{3} \varepsilon \big) \Big) . \]
Since $\Psi \big( X, \tfrac{1}{3} \varepsilon \big) = (K_1, \dotsc ,K_n)$, each of the sets $K_1, \dotsc ,K_n$ is a continuum contained in $X$ with diameter less than $\frac{1}{3} \varepsilon$. Thus, by Proposition \ref{SXAepsilonIsLC}, each of the sets $\Theta \big( X,K_1, \tfrac{1}{3} \varepsilon \big) , \dotsc ,\Theta \big( X,K_n, \tfrac{1}{3} \varepsilon \big)$ is a Peano continnum with diameter less than $\varepsilon$. Moreover, it is clear that $K_i \subseteq S \big( X,K_i,\frac{1}{3} \varepsilon \big) \subseteq X$ for every $i=1, \dotsc ,n$. Therefore, as $\Psi \big( X, \tfrac{1}{3} \varepsilon \big) = (K_1, \dotsc ,K_n)$, we have
\[ X = \bigcup_{i=1}^n K_i \subseteq \bigcup_{i=1}^n S \big( X,K_i,\tfrac{1}{3} \varepsilon \big) \subseteq X, \]
which implies that
\[ \bigcup_{i=1}^n \Theta \big( X,K_i, \tfrac{1}{3} \varepsilon \big) = \bigcup_{i=1}^n \overline{S \big( X,K_i,\tfrac{1}{3} \varepsilon \big)} = X. \]
Finally, as both of the mappings $\Psi$ and $\Theta$ are Borel measurable, $\Psi_P$ is Borel measurable too.
\end{proof}
The following lemma is a special case of \cite[8.13]{Nadler}.
\begin{lemma}\label{SequenceOfContinuaIntoChain}
Let $n \in \N$ and let $K_1, \dotsc ,K_n \subseteq Q$ be continua such that the set $X:= K_1 \cup \dots \cup K_n$ is connected. Then for any two points $x,y\in X$, there is $m \in \N$ and continua $L_1, \dotsc ,L_m \subseteq Q$ such that $x \in L_1$, $y \in L_m$, $\{ K_1, \dotsc ,K_n \} = \{ L_1, \dotsc ,L_m \}$ and $L_{i-1} \cap L_i \neq \emptyset$ for every $i \in \{ 2, \dotsc ,m \}$.
\end{lemma}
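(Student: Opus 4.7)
The plan is to prove this by a graph-theoretic argument on the \emph{intersection graph} of the continua. Define a finite simple graph $G$ whose vertex set is $\{K_1,\dotsc,K_n\}$ and whose edge set consists of pairs $\{K_i,K_j\}$ with $K_i \cap K_j \neq \emptyset$.

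The first step is to show that $G$ is connected. Suppose for contradiction that the vertex set splits into two nonempty parts $\mathcal{U}$ and $\mathcal{V}$ with no edges between them. Then $U := \bigcup \mathcal{U}$ and $V := \bigcup \mathcal{V}$ are nonempty closed (as finite unions of compact sets) subsets of $Q$ whose union is $X$, and they are disjoint because no $K_i \in \mathcal{U}$ meets any $K_j \in \mathcal{V}$. This separation contradicts the connectedness of $X$. Hence $G$ is connected.

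Next, fix indices $a$ and $b$ such that $x \in K_a$ and $y \in K_b$. The goal is to construct a walk $L_1,\dotsc,L_m$ in $G$ (a sequence of vertices such that consecutive ones are adjacent or equal) that starts at $K_a$, ends at $K_b$, and contains every $K_j$ as a term. Since $G$ is connected, for any two vertices there is a path between them. So one can proceed greedily: enumerate the vertices as $v_1 = K_a, v_2, \dotsc, v_{n-1}, v_n = K_b$ in any order, and for each consecutive pair $v_k, v_{k+1}$, insert a path in $G$ between them. Concatenating these paths (identifying the shared endpoints) produces the required walk $L_1,\dotsc,L_m$; by construction $L_1 = K_a \ni x$, $L_m = K_b \ni y$, consecutive $L_i$'s are adjacent in $G$ hence intersect, and $\{L_1,\dotsc,L_m\} = \{K_1,\dotsc,K_n\}$ (the inclusion $\supseteq$ holds because every $v_k$ appears, and $\subseteq$ holds because each $L_i$ is some $v_k$).

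There is no real obstacle here beyond organizing the walk carefully; the combinatorial step is elementary once the intersection graph is known to be connected, and that follows immediately from the connectedness of $X$ together with the compactness (hence closedness in $Q$) of each $K_i$. The only case to handle separately is $a = b$, but then the trivial enumeration still works.
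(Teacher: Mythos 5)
Your argument is correct and complete. The paper itself gives no proof of this lemma---it simply cites it as a special case of \cite[8.13]{Nadler}---and your intersection-graph argument (connectedness of the graph via a separation of $X$ into two disjoint nonempty closed sets, followed by concatenating paths through an enumeration of all vertices from $K_a$ to $K_b$) is essentially the standard proof of that cited fact, including the correct handling of repeated terms (a continuum is nonempty, so $L_{i-1}=L_i$ still gives $L_{i-1}\cap L_i\neq\emptyset$) and of the degenerate case $K_a=K_b$.
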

Now, let us present a Borel version of Lemma \ref{SequenceOfContinuaIntoChain}. Define
\begin{align*}
    \mathcal{M} := \Big\lbrace (\Gamma ,x,y) \in \mathcal{Z} \times Q \times Q \, ; \ &\textup{if } \Gamma = (K_i)_{i=1}^n \textup{, then the set } K_1 \cup \dots \cup K_n\\
    &\textup{is connected and contains both } x \textup{ and } y \Big\rbrace
\end{align*}
and equip $\mathcal{M}$ with the subspace topology inherited from $\mathcal{Z} \times Q \times Q$.
\begin{lemma}\label{BorelSequenceOfContinuaIntoChain}
There is a Borel measurable mapping $\zeta \colon \mathcal{M} \to \mathcal{Z}$ such that for every $(\Gamma ,x,y) \in \mathcal{M}$, if $\Gamma = (K_1, \dotsc ,K_n)$ and $\zeta (\Gamma ,x,y) = (L_1, \dotsc ,L_m)$, then $\{ K_1, \dotsc ,K_n \} = \{ L_1, \dotsc ,L_m \}$, $x \in L_1$, $y \in L_m$ and $L_{i-1} \cap L_i \neq \emptyset$ for every $i=2, \dotsc ,m$.
\end{lemma}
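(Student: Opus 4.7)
The plan is to split $\mathcal{M}$ according to the length of $\Gamma$ and, on each piece, reduce the construction of $\zeta$ to a finite combinatorial selection among boundedly many candidate patterns. Set $\mathcal{M}_n := \mathcal{M} \cap \bigl( \mathsf{C}(Q)^n \times Q \times Q \bigr)$; since $\mathsf{C}(Q)^n$ is clopen in $\mathcal{Z}$, each $\mathcal{M}_n$ is clopen in $\mathcal{M}$, and it suffices to build a Borel measurable $\zeta$ on each $\mathcal{M}_n$ separately and patch.

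The first step is a length bound: for every $(\Gamma, x, y) \in \mathcal{M}_n$ with $\Gamma = (K_1, \ldots, K_n)$, a chain $(L_1, \ldots, L_m)$ as required can be chosen with $m \leq 2n - 1$. Indeed, connectedness of $K_1 \cup \cdots \cup K_n$ forces the intersection graph on $\{1, \ldots, n\}$ (edges joining $i,j$ whenever $K_i \cap K_j \neq \emptyset$) to be connected, so a depth-first traversal of any spanning tree from a vertex containing $x$ to a vertex containing $y$ uses at most $2(n-1)$ edges and visits every vertex. This reduces the problem to a finite enumeration.

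Enumerate in a fixed order the finitely many pairs $(m, \sigma)$ with $1 \leq m \leq 2n - 1$ and $\sigma \colon \{1, \ldots, m\} \to \{1, \ldots, n\}$ as $P_1, \ldots, P_N$, and for each $P_k = (m_k, \sigma_k)$ set
\begin{align*}
A_k := \Bigl\{ (\Gamma, x, y) \in \mathcal{M}_n \, ; \ &x \in K_{\sigma_k(1)},\ y \in K_{\sigma_k(m_k)}, \\
&K_{\sigma_k(i-1)} \cap K_{\sigma_k(i)} \neq \emptyset \ \textup{for } 2 \leq i \leq m_k, \\
&\{K_{\sigma_k(1)}, \ldots, K_{\sigma_k(m_k)}\} = \{K_1, \ldots, K_n\} \Bigr\}.
\end{align*}
Each of these conditions cuts out a closed subset of $\mathcal{M}_n$: the set $\{(K,x) \in \mathcal{K}(Q) \times Q \, ; \ x \in K\}$ is closed, the set $\{(K,K') \in \mathcal{K}(Q)^2 \, ; \ K \cap K' \neq \emptyset\}$ is closed, and the set equality reduces to a finite conjunction over indices $i$ of finite disjunctions of the conditions $K_i = K_{\sigma_k(j)}$, each of which is closed in $\mathsf{C}(Q)^n$ (the diagonal in a Hausdorff space). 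Thus each $A_k$ is closed in $\mathcal{M}_n$, and by Lemma \ref{SequenceOfContinuaIntoChain} together with the length bound, $A_1 \cup \cdots \cup A_N = \mathcal{M}_n$.

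Finally put $B_k := A_k \setminus \bigcup_{j < k} A_j$; the $B_k$ form a Borel partition of $\mathcal{M}_n$, and on $B_k$ the assignment $\zeta(\Gamma, x, y) := (K_{\sigma_k(1)}, \ldots, K_{\sigma_k(m_k)}) \in \mathsf{C}(Q)^{m_k} \subseteq \mathcal{Z}$ is a continuous projection of $\Gamma$. Hence $\zeta$ is Borel measurable on each $\mathcal{M}_n$, and patching over $n \in \N$ yields the required mapping. The only non-routine point is the length bound that makes the enumeration finite; everything else reduces to the elementary observation that the relevant conditions in $\mathcal{K}(Q)$ are closed.
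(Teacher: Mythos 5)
Your proof is correct and follows essentially the same strategy as the paper's: enumerate candidate index patterns, observe that the set of inputs validated by each fixed pattern is closed in $\mathcal{M}$, and take the first valid pattern in a fixed enumeration of the (countably many) patterns. The only difference is that the paper enumerates all patterns at once countably, so it needs no length bound, whereas you make the enumeration finite on each length class via the spanning-tree bound $m \leq 2n-1$ --- a correct but unnecessary refinement.
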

\begin{proof}
For a given $(\Gamma ,x,y) \in \mathcal{M}$ with $\Gamma = (K_1, \dotsc ,K_n)$, the task of finding the corresponding finite sequence $(L_1, \dotsc ,L_m) \in \mathcal{Z}$ is equivalent to the task of finding a finite sequence $(k_1, \dotsc ,k_m)$ of natural numbers such that $\{ k_1, \dotsc ,k_m \} = \{ 1, \dotsc ,n \}$, $x \in K_{k_1}$, $y \in K_{k_m}$ and $K_{k_{i-1}} \cap K_{k_i} \neq \emptyset$ for $i=2, \dotsc ,m$. Indeed, once these numbers $k_1, \dotsc ,k_m$ are found, we can simply define $L_i := K_{k_i}$ for each $i=1, \dotsc ,m$. Therefore, the task of finding the Borel measurable mapping $\zeta$ is closely related to the task of finding a suitable Borel measurable mapping from $\mathcal{M}$ to a suitable space of finite sequences of natural numbers. Formally, we are going to represent finite sequences of natural number by infinite sequences of nonnegative integers whose terms are equal to zero from some point on. Define
\[ T:= \Big\lbrace \alpha = (\alpha_1, \alpha_2, \dotsc) \in {\N_0}^{\N} \, ; \ \exists \, m,n \in \N : \{ \alpha_1, \dotsc , \alpha_m \} = \{ 1, \dotsc ,n \} \, , \ \alpha_{m+1} = \alpha_{m+2} = \dots = 0 \Big\rbrace \]
and equip this set with the discrete topology. For every $\alpha \in T$, let $m(\alpha), n(\alpha) \in \N$ be the numbers witnessing that $\alpha \in T$. For every $\Gamma \in \mathcal{Z}$, let $l(\Gamma) \in \N$ be the number satisfying $\Gamma \in \big( \mathsf{C} (Q) \big)^{l(\Gamma)}$. For every $(\Gamma ,x,y) \in \mathcal{M}$, let
\[ A(\Gamma ,x,y) := \Big\lbrace \alpha \in T \, ; \, \ l(\Gamma)=n(\alpha), \ x \in \Gamma_{\alpha_1}, \ y \in \Gamma_{\alpha_{m(\alpha)}}, \ \Gamma_{\alpha_{i-1}} \cap \Gamma_{\alpha_i} \neq \emptyset \, \textup{ for } i=2, \dotsc ,m(\alpha) \Big\rbrace. \]
By Lemma \ref{SequenceOfContinuaIntoChain}, the set $A(\Gamma ,x,y)$ is nonempty for each $(\Gamma ,x,y) \in \mathcal{M}$. Also, it is easy to see that for every $\alpha \in T$, the set $\mathcal{M}_{\alpha} := \big\lbrace (\Gamma ,x,y) \in \mathcal{M} \, ; \ \alpha \in A(\Gamma ,x,y) \big\rbrace$ is closed in $\mathcal{M}$. As $T$ is countable, we can write $T = \big\lbrace \alpha^1 , \alpha^2 , \alpha^3 , \dotsc \big\rbrace$. Define mappings $\mu \colon \mathcal{M} \to \N$ and $\zeta_1 \colon \mathcal{M} \to T$ by
\[ \mu (\Gamma ,x,y) = \mathrm{min} \big\lbrace k \in \N \, ; \ \alpha^k \in A(\Gamma ,x,y) \big\rbrace \, , \ \ \ \zeta_1 (\Gamma ,x,y) = \alpha^{\mu (\Gamma ,x,y)}. \]
Let us show that $\mu$ is Borel measurable. Since $\N$ is a discrete space, we have to show that $\mu^{-1} (\{ j \})$ is Borel in $\mathcal{M}$ for each $j \in \N$. Clearly, $\mu^{-1} (\{ 1 \}) = \big\lbrace (\Gamma ,x,y) \in \mathcal{M} \, ; \ \alpha^1 \in A(\Gamma ,x,y) \big\rbrace = \mathcal{M}_{\alpha^1}$ and for every $j \in \N$, we have $\mu^{-1} (\{ j+1 \}) = \mathcal{M}_{\alpha^{j+1}} \setminus \big( \mathcal{M}_{\alpha^1} \cup \dots \cup \mathcal{M}_{\alpha^j} \big)$.
Thus, $\mu$ is Borel measurable and it follows that $\zeta_1$ is Borel measurable too.

Finally, let $\mathcal{N}:= \big\lbrace (\Gamma, \alpha) \in \mathcal{Z} \times T \, ; \ n(\alpha) \leq l(\Gamma) \big\rbrace$ and define a mapping $\zeta_2 \colon \mathcal{N} \to \mathcal{Z}$ by $\zeta_2 (\Gamma, \alpha) = \big( \Gamma_{\alpha_1}, \dotsc ,\Gamma_{\alpha_{m(\alpha)}} \big)$. Clearly, $\zeta_2$ is continuous. Define the desired mapping $\zeta \colon \mathcal{M} \to \mathcal{Z}$ by $\zeta (\Gamma,x,y) = \zeta_2 \big( \Gamma, \zeta_1 (\Gamma ,x,y) \big)$. Since both of the mappings $\zeta_1$, $\zeta_2$ are Borel measurable, so is $\zeta$. It is straightforward to verify that $\zeta$ is the mapping we are after.
\end{proof}

For any topological space $Y$, denote by $\mathcal{F} (Y)$ the family of all nonempty closed subsets of $Y$. Recall that for any two topological spaces $X$ and $Y$, a mapping $f \colon X \to \mathcal{F} (Y)$ is said to be upper semi-continuous if the set $\big\lbrace x \in X \, ; \ f(x) \subseteq V \big\rbrace$ is open in $X$ for every open set $V \subseteq Y$.
The following theorem can be found in \cite[7.4]{Nadler}.

\begin{theorem}\label{GeneralMapping}
 Let $X$ and $Y$ be nonempty compact metric spaces and let $f_n \colon X \to \mathcal{F} (Y)$, $n \in \N$, be upper semi-continuous mappings such that:
 \begin{itemize}
     \item $\forall \, n \in \N \ \forall \, x \in X : f_{n+1} (x) \subseteq f_n (x)$;
     \item $\forall \, n \in \N : \ \displaystyle \bigcup \big\lbrace f_n (x) \, ; \ x \in X \big\rbrace = Y $;
     \item $\forall \, x \in X : \displaystyle \lim_{n \to \infty} \mathrm{diam} \big( f_n (x) \big) = 0$.
 \end{itemize}
Then the (uniquely determined) mapping $f \colon X \to Y$ satisfying 
\[ \big\lbrace f(x) \big\rbrace = \bigcap_{n \in \N} f_n (x) \]
for each $x \in X$ is surjective and continuous.
\end{theorem}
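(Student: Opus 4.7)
The plan is to verify three things in sequence: (a) that the intersection $\bigcap_{n} f_n(x)$ really is a singleton so that $f$ is well defined, (b) continuity of $f$, and (c) surjectivity. None of the steps should require anything beyond the three listed hypotheses together with standard compactness arguments, so the bulk of the work is just organizing the upper semi-continuity correctly.

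For well-definedness I would argue that $(f_n(x))_{n}$ is a decreasing sequence of nonempty compact subsets of $Y$ (compact since they are closed subsets of the compact space $Y$), hence its intersection is nonempty by the finite intersection property; the third hypothesis forces that intersection to have diameter $0$, so it is a single point. For continuity, fix $x_0 \in X$ and $\varepsilon > 0$. Choose $N$ so that $\mathrm{diam}\bigl(f_N(x_0)\bigr) < \varepsilon/3$ and let $V$ be the open $\varepsilon/3$-neighborhood of $f_N(x_0)$ in $Y$, so $\mathrm{diam}(V) < \varepsilon$. Upper semi-continuity of $f_N$ yields a neighborhood $U$ of $x_0$ with $f_N(x) \subseteq V$ for every $x \in U$. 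Since $f(x) \in f_N(x) \subseteq V$ and $f(x_0) \in f_N(x_0) \subseteq V$, this gives $d\bigl(f(x),f(x_0)\bigr) < \varepsilon$ on $U$.

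The step I expect to require slightly more care is surjectivity. Given $y \in Y$, the second hypothesis produces $x_n \in X$ with $y \in f_n(x_n)$ for every $n$. Compactness of $X$ lets me pass to a subsequence with $x_n \to x$ for some $x \in X$. I then claim $y \in f_m(x)$ for every $m$. Indeed, for $n \geq m$ in the subsequence, the nesting property gives $y \in f_n(x_n) \subseteq f_m(x_n)$. If $y \notin f_m(x)$, then since $f_m(x)$ is compact and $\{y\}$ is closed, one can separate them by disjoint open sets $V \supseteq f_m(x)$ and $W \ni y$; upper semi-continuity of $f_m$ then forces $f_m(x_n) \subseteq V$ for all sufficiently large $n$ in the subsequence, contradicting $y \in f_m(x_n) \subseteq V \cap W^c$. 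Consequently $y \in \bigcap_m f_m(x) = \{f(x)\}$, and $f(x) = y$.

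The main (mild) obstacle is precisely this last argument: one has to use upper semi-continuity on each single $f_m$ separately, rather than on $f$ directly, because the diameter condition that ensures singleton intersections is a pointwise statement and does not translate into a convenient joint statement. Otherwise the proof is a direct unwinding of the three hypotheses.
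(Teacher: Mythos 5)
Your proof is correct. Note that the paper does not prove this statement at all---it is quoted verbatim from Nadler [7.4] and used as a black box---so there is no in-paper argument to compare against; your three steps (Cantor intersection plus the diameter condition for well-definedness, upper semi-continuity of a single $f_N$ with small $\mathrm{diam}\,f_N(x_0)$ for continuity, and the subsequence-plus-separation argument for surjectivity) are exactly the standard proof one finds in the cited source.
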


Denote by ${\N}^{< \N}$ the set of all (nonempty) finite sequences of natural numbers and equip ${\N}^{< \N}$ with the discrete topology. More formally, ${\N}^{< \N}$ is the topological sum
\[ \bigoplus_{p \in \N}{\N}^p,\]
where $\N$ is considered as a discrete space.

\begin{definition}
Let $X \in \mathsf{C}(Q)$, $\varepsilon > 0$ and $\Gamma = (K_1, \dotsc , K_k) \in \mathcal{Z}_P$. We say that $\Gamma$ is a weak $\varepsilon$-chain covering $X$ if the following three conditions hold:
\begin{enumerate}[label=(\roman*),font=\textup, noitemsep]
    \item $K_1 \cup \dots \cup K_k = X$;
    \item $\mathrm{diam}_{\varrho} (K_i) < \varepsilon$ for $i = 1, \dotsc ,k \,$;
    \item $K_{i-1} \cap K_i \neq \emptyset$ for $i = 2, \dotsc ,k \,$.
\end{enumerate}
\end{definition}

\begin{definition}
Let $\Gamma = (K_1, \dotsc , K_k) \in \mathcal{Z}$, $\Gamma' = (L_1, \dotsc , L_l) \in \mathcal{Z}$ and let $\gamma = (j_1, \dotsc , j_m) \in {\N}^{< \N}$. We say that $\Gamma'$ is a refinement of $\Gamma$ coded by $\gamma$ provided that $m=k$ and the following three assertions are satisfied:
\begin{enumerate}[label=(\roman*),font=\textup, noitemsep]
    \item $1=j_1 < j_2 < \dots < j_k \leq l \,$;
    \item $\ L_{j_k} \cup L_{j_k +1} \cup \dots \cup L_l = K_k \,$;
    \item $L_{j_{i-1}} \cup L_{j_{i-1}+1} \cup \dots \cup L_{j_i-1} = K_{i-1}$ for $i = 2, \dotsc ,k \,$.
\end{enumerate}
\end{definition}

\begin{lemma}\label{BorelSequenceOfSubChains}
There exist Borel measurable mappings $\mu_n \colon \mathsf{LC}(Q) \to {\N}^{< \N}$ and $\Psi_n \colon \mathsf{LC}(Q) \to \mathcal{Z}_P$, $n \in \N$, such that for every $X \in \mathsf{LC}(Q)$ and every $n \in \N$, the following two assertions hold:
\begin{enumerate}[label=(\roman*),font=\textup]
    \item $\Psi_n (X)$ is a weak $2^{-n}$-chain covering $X$.
    \item If $n > 1$, then $\Psi_n (X)$ is a refinement of $\Psi_{n-1} (X)$ coded by $\mu_n (X)$.
\end{enumerate}
\end{lemma}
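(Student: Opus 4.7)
The plan is to construct $\Psi_n$ and $\mu_n$ simultaneously by induction on $n$, using $\Psi_P$ from Theorem~\ref{PeanoContinuumCanBeCoveredBySmallPeanoContinua} to refine each piece and $\zeta$ from Lemma~\ref{BorelSequenceOfContinuaIntoChain} to arrange the refinements into chains with prescribed endpoints. Fix once and for all a Borel selector $\sigma \colon \mathcal{K}(Q) \to Q$, which exists by, e.g., the Kuratowski--Ryll-Nardzewski selection theorem. For the base case $n = 1$, I set $\Psi_1(X) := \zeta\bigl( \Psi_P(X, 1/2), \sigma(X), \sigma(X) \bigr)$ and $\mu_1(X) := (1)$, the latter being arbitrary since no refinement condition is imposed at the first stage.

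For the inductive step $n \to n+1$, assume $\Psi_n(X) = (K_1, \dotsc, K_k)$. Using the intersection map from Lemma~\ref{intersectionIsBorel} together with $\sigma$, I pick transition points $p_i := \sigma(K_{i-1} \cap K_i)$ for $i = 2, \dotsc, k$ (the intersections are nonempty since $\Psi_n(X)$ is a chain), and set $p_1 := \sigma(K_1)$, $p_{k+1} := \sigma(K_k)$. For each $i$, let $(L_{i,1}, \dotsc, L_{i,l_i}) := \zeta\bigl( \Psi_P(K_i, 2^{-(n+1)}), p_i, p_{i+1} \bigr)$; this is well-defined because $K_i \in \mathsf{LC}(Q)$ contains both $p_i$ and $p_{i+1}$, so the triple lies in $\mathcal{M}$. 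I then define $\Psi_{n+1}(X)$ as the concatenation $(L_{1,1}, \dotsc, L_{1,l_1}, L_{2,1}, \dotsc, L_{k,l_k})$, and set $\mu_{n+1}(X) := (1,\, l_1 + 1,\, l_1 + l_2 + 1,\, \dotsc,\, l_1 + \dots + l_{k-1} + 1)$.

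The resulting $\Psi_{n+1}(X)$ is a weak $2^{-(n+1)}$-chain covering $X$: within each block $(L_{i,1}, \dotsc, L_{i,l_i})$ consecutive pieces meet because $\zeta$ outputs a chain; at the junction between blocks $i$ and $i+1$ both $L_{i,l_i}$ and $L_{i+1,1}$ contain the point $p_{i+1}$; the diameter bound comes from $\Psi_P$, and the covering property follows from $K_1 \cup \dots \cup K_k = X$. The index tuple $\mu_{n+1}(X)$ satisfies conditions (i)--(iii) of the refinement definition immediately from its construction.

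The main obstacle is to verify that the recursion is Borel measurable despite the variable lengths involved. I would handle this by exploiting that $\mathcal{Z} = \bigoplus_{k \in \N} (\mathsf{C}(Q))^k$ is a topological sum, so the length function on $\mathcal{Z}$ is locally constant and hence continuous. Consequently $\mathsf{LC}(Q)$ decomposes into countably many Borel pieces on which $k$ and the lengths $l_1, \dotsc, l_k$ are simultaneously constant, and on each such piece the construction reduces to a finite composition of Borel maps (the coordinate projections extracting $K_i$ from $\Psi_n(X)$, the selector $\sigma$, the intersection map, $\Psi_P$, and $\zeta$) followed by a continuous concatenation into the appropriate summand of $\mathcal{Z}$. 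Gluing these fragments across the countable Borel partition yields Borel measurability of $\Psi_{n+1}$ and $\mu_{n+1}$ on all of $\mathsf{LC}(Q)$.
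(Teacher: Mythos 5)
Your construction coincides with the paper's own proof: the same base case $\Psi_1(X)=\zeta\bigl(\Psi_P(X,\tfrac12),\sigma(X),\sigma(X)\bigr)$, the same inductive step refining each link $K_i$ via $\Psi_P(K_i,2^{-n})$ and re-chaining it with $\zeta$ between the selected points $\sigma(K_{i-1}\cap K_i)$ and $\sigma(K_i\cap K_{i+1})$, and the same index tuple $\mu_n$ recording the partial sums of the block lengths. Your treatment of measurability via the countable Borel partition of $\mathsf{LC}(Q)$ on which all lengths are constant is exactly the "straightforward although quite tedious" verification the paper omits, so the proposal is correct and essentially identical in approach.
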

\begin{proof}
Let $\Psi_P \colon \mathsf{LC} (Q) \times {\R}^+ \to \mathcal{Z}_P$ and $\zeta \colon \mathcal{M} \to \mathcal{Z}$ be the mappings given by Theorem \ref{PeanoContinuumCanBeCoveredBySmallPeanoContinua} and Lemma \ref{BorelSequenceOfContinuaIntoChain} respectively. Let $\sigma \colon \mathcal{K} (Q) \to Q$ be a Borel measurable mapping satisfying $\sigma (X) \in X$ for each $X \in \mathcal{K} (Q)$. It is well-known that such a mapping exists (it is a simple application of the Kuratowski and Ryll-Nardzewski selection theorem). Define $\Psi_1 \colon \mathsf{LC}(Q) \to \mathcal{Z}_P$ by
\[ \Psi_1 (X) = \zeta \Big( \Psi_P \big( X, \tfrac{1}{2} \big), \sigma (X), \sigma (X) \Big) \]
and let $\mu_1 \colon \mathsf{LC}(Q) \to {\N}^{< \N}$ be an arbitrary Borel measurable mapping. It is easy to see that $\Psi_1$ is Borel measurable and the assertion (i) is satisfied for $n=1$. Note that the assertion (ii) does not say anything about $n=1$. Let us proceed by induction. Assume that $n \in \N \setminus \{ 1 \}$ is given and the mappings $\mu_1, \dotsc, \mu_{n-1}$ and $\Psi_1, \dotsc, \Psi_{n-1}$ have already been found. Let $X \in \mathsf{LC} (Q)$ be given, we are going to define $\Psi_n (X)$ and $\mu_n (X)$ in the following way: Assume that $\Psi_{n-1} (X)=(K_1, \dotsc ,K_k)$. If $k=1$, then (since $K_1 \cup \dots \cup K_k = X$) we have $K_1 = X$ and we can simply define
\[ \mu_n (X) = (1) \, , \ \Psi_n (X) = \zeta \Big( \Psi_P \big( X, 2^{-n} \big), \sigma (X), \sigma (X) \Big). \]
Let us focus on the case when $k>1$. Assume that
\begin{align*}
    \zeta \Big( \Psi_P \big( K_1, 2^{-n} \big), \sigma (K_1), \sigma (K_1 \cap K_2) \Big) &= \big( L_1^1, \dotsc ,L_{l(1)}^1 \big),\\
    \zeta \Big( \Psi_P \big( K_k, 2^{-n} \big), \sigma (K_{k-1} \cap K_k), \sigma (K_k) \Big) &= \big( L_1^k, \dotsc ,L_{l(k)}^k \big)
\end{align*}
and
\[ \zeta \Big( \Psi_P \big( K_i, 2^{-n} \big), \sigma (K_{i-1} \cap K_i), \sigma (K_i \cap K_{i+1}) \Big) = \big( L_1^i, \dotsc ,L_{l(i)}^i \big) \]
for every $i \in \N$ with $1<i<k$. Then we can define
\begin{align*}
    \Psi_n (X) &= \big( L_1^1, \dotsc ,L_{l(1)}^1, L_1^2, \dotsc L_{l(2)}^2, \dotsc , L_1^k, \dotsc ,L_{l(k)}^k \big),\\
    \mu_n (X) &= \big( 1, 1+l(1), \dotsc , 1+l(1)+\dots +l(k-1) \big) = \bigg( 1+\sum_{j=1}^{i-1} l(j) \bigg)_{i=1}^{k}.
\end{align*}
It is straightforward (although quite tedious) to verify that $\Psi_n$ and $\mu_n$ are Borel measurable (among other things, Lemma \ref{intersectionIsBorel} is used here) and both of the assertions (i) and (ii) are satisfied for $n$.
\end{proof}

\begin{theorem}\label{MainTheorem}
There is a Borel measurable mapping $\Phi \colon \mathsf{LC}(Q) \to C(I,Q)$ such that for every $X \in \mathsf{LC}(Q)$, the image of $I$ under $\Phi (X)$ is equal to $X$.
\end{theorem}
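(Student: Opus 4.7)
The plan is to follow the classical Hahn--Mazurkiewicz argument as presented in \cite{Nadler} in a Borel fashion, feeding the nested Peano chains produced by Lemma \ref{BorelSequenceOfSubChains} into Theorem \ref{GeneralMapping}. Fix a compatible metric $\varrho$ on $Q$. For $X \in \mathsf{LC}(Q)$ and $n \in \N$, write $\Psi_n(X) = (K_1^n, \dotsc , K_{k_n}^n)$ and let $\mu_n(X)$ encode how $\Psi_n(X)$ refines $\Psi_{n-1}(X)$. I will construct upper semi-continuous set-valued maps $f_n^X \colon I \to \mathcal{F}(Q)$ satisfying the hypotheses of Theorem \ref{GeneralMapping} and define $\Phi(X)$ to be the continuous surjection $I \to X$ it delivers.

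Recursively I would attach to $\Psi_n(X)$ a subdivision $I = J_1^n(X) \cup \dotsb \cup J_{k_n}^n(X)$ into closed intervals with pairwise disjoint interiors, such that whenever $K_j^n$ belongs to the block of $\Psi_n(X)$ refining $K_i^{n-1}$ (as dictated by $\mu_n(X) = (j_1, \dotsc , j_{k_{n-1}})$, so that $K_i^{n-1} = K_{j_i}^n \cup \dotsb \cup K_{j_{i+1}-1}^n$ for $i < k_{n-1}$ and $K_{k_{n-1}}^{n-1} = K_{j_{k_{n-1}}}^n \cup \dotsb \cup K_{k_n}^n$), one has $J_j^n(X) \subseteq J_i^{n-1}(X)$. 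Concretely, I begin with $k_1$ equal pieces and at each subsequent level split $J_i^{n-1}(X)$ into $j_{i+1} - j_i$ (respectively $k_n - j_{k_{n-1}} + 1$ for $i = k_{n-1}$) equal closed subintervals. Setting
\[ G_n^X := \bigcup_{i=1}^{k_n} J_i^n(X) \times K_i^n(X) \in \mathcal{K}(I \times Q) , \]
this compact set is the graph of the set-valued mapping $f_n^X(t) := \bigcup \{ K_i^n(X) \, ; \, t \in J_i^n(X) \}$. The nesting of the subdivisions and the inclusions $K_j^n \subseteq K_i^{n-1}$ give $f_{n+1}^X(t) \subseteq f_n^X(t)$, while $\mathrm{diam}_{\varrho}(K_i^n) < 2^{-n}$ yields $\mathrm{diam}_{\varrho} \bigl( f_n^X(t) \bigr) < 2 \cdot 2^{-n}$; upper semi-continuity of $f_n^X$ is immediate and $\bigcup_{t \in I} f_n^X(t) = X$ holds because $\Psi_n(X)$ covers $X$.

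To obtain Borel measurability of $X \mapsto G_n^X$, I would stratify $\mathsf{LC}(Q)$ by the value of the tuple $\bigl( k_1(X), \mu_2(X), \dotsc , \mu_n(X) \bigr)$, which is a Borel map into a countable discrete space. On each Borel stratum the integer $k_n$ and the subintervals $J_1^n(X), \dotsc , J_{k_n}^n(X)$ are fixed (independent of $X$), while $X \mapsto K_i^n(X)$ is Borel as a coordinate of the Borel map $\Psi_n$; the map $X \mapsto G_n^X$ on that stratum is then a finite union of products of a fixed interval with a Borel-varying compact set, hence Borel into $\mathcal{K}(I \times Q)$. Gluing countably many strata preserves Borelness. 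Letting $G^X := \bigcap_{n \in \N} G_n^X$, Lemma \ref{IntersectionOfBorelMapsToHyperspaceIsBorel} makes $X \mapsto G^X$ Borel; Theorem \ref{GeneralMapping} guarantees that $G^X$ is the graph of a continuous surjection $\Phi(X) \colon I \to X$, and Lemma \ref{SpaceOfCtsFctionsEmbeddedIntoHyperspace} then promotes Borelness of $X \mapsto G^X$ to Borelness of $X \mapsto \Phi(X) \in C(I,Q)$.

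I expect the main obstacle to be arranging the interval subdivisions so that they genuinely nest across levels despite the variable chain lengths; this is precisely why Lemma \ref{BorelSequenceOfSubChains} was set up to expose the explicit refinement codings $\mu_n$ and not merely the Peano chains $\Psi_n$. Once this bookkeeping is in place, the Borel measurability argument becomes routine: the combinatorial data $(k_1, \mu_2, \mu_3, \dotsc)$ take values in countable discrete sets, reducing everything to the Borel measurability of $\Psi_n$ already established.
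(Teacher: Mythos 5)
Your proposal is correct and follows essentially the same route as the paper: nested equal subdivisions of $I$ driven by the refinement codings from Lemma \ref{BorelSequenceOfSubChains}, the compact sets $\bigcup_i J_i^n \times K_i^n$ in $\mathcal{K}(I\times Q)$, Theorem \ref{GeneralMapping} to produce the surjection, and Lemmata \ref{IntersectionOfBorelMapsToHyperspaceIsBorel} and \ref{SpaceOfCtsFctionsEmbeddedIntoHyperspace} to transfer Borelness from the graphs to $C(I,Q)$. The only (immaterial) difference is that you obtain Borel measurability of the interval endpoints by stratifying $\mathsf{LC}(Q)$ over the countable discrete combinatorial data, whereas the paper constructs the partition maps $\tau_n$ explicitly.
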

\begin{proof}
Let $\mu_n \colon \mathsf{LC}(Q) \to {\N}^{< \N}$ and $\Psi_n \colon \mathsf{LC}(Q) \to \mathcal{Z}_P$, $n \in \N$, be the Borel measurable mappings given by Lemma \ref{BorelSequenceOfSubChains}. For every $\Gamma \in \mathcal{Z}$, let $l(\Gamma) \in \N$ be the number satisfying $\Gamma \in \big( \mathsf{C} (Q) \big)^{l(\Gamma)}$. Let us define a function $\lambda \colon \mathsf{LC}(Q) \times \N \to \N$ by $\lambda (X,n) = l \big( \Psi_n (X) \big)$. Clearly, $\lambda$ is Borel measurable. For every $n \in \N \setminus \{ 1 \}$, define a mapping $\nu_n \colon \mathsf{LC}(Q) \to {\N}^{< \N}$ such that for every $X \in \mathsf{LC}(Q)$, roughly speaking, the $i$-th element of $\nu_n (X)$ is the length of the subsequence of $\Psi_n (X)$ corresponding to the $i$-th member of $\Psi_{n-1} (X)$. Formally, if $\mu_n (X) = (j_1, \dotsc, j_m)$, we define $\nu_n (X)$ as follows: If $m=1$, let $\nu_n (X) = \big( \lambda (X,n) \big)$. If $m=2$, define
\[ \nu_n (X) = \big( j_2-1, \ \lambda (X,n)+1-j_2 \big) = \big( j_2-j_1, \ \lambda (X,n)+1-j_m \big).\]
Finally, if $m \geq 3$, let
\[ \nu_n (X) = \big( j_2-j_1, \ j_3-j_2, \, \dotsc \, , j_m-j_{m-1}, \ \lambda (X,n)+1-j_m \big). \]
For any $n \in \N \setminus \{ 1 \}$, since $\mu_n$ is Borel measurable, so is $\nu_n$. Note that for every $n \in \N \setminus \{ 1 \}$ and $X \in \mathsf{LC}(Q)$, if $\nu_n (X) = (i_1, \dotsc, i_k)$ and $\mu_n (X) = (j_1, \dotsc, j_m)$, then $k=m=\lambda (X,n-1)$ and $i_1+\dots+i_k=\lambda (X,n)$.

Let $W:= \big\lbrace (a,b,k) \in I \times I \times \N \, ; \ a<b \big\rbrace$, $\displaystyle T:= \bigoplus_{p \in \N} I^p$ and define a mapping $\phi \colon W \to T$ by
\[ \phi (a,b,k) = \bigg( a+\frac{(i-1)(b-a)}{k} \bigg)_{i=1}^{k+1}. \]
In other words, for any $(a,b,k) \in W$, if $\phi (a,b,k) = (t_0, \dotsc, t_m)$, then $m=k$, $a=t_0 < t_1 < \dots < t_m=b$ and $t_i - t_{i-1} = \frac{b-a}{k}$ for $i=1, \dotsc, m$. Clearly, $\phi$ is continuous. In particular, $\phi$ is Borel measurable and so is the mapping $\tau_1 \colon \mathsf{LC}(Q) \to T$ given by
$\tau_1 (X) = \phi \big( 0,1,\lambda(X,1) \big)$. Define a mapping $\tau_2 \colon \mathsf{LC}(Q) \to T$ as follows: Let $X \in \mathsf{LC}(Q)$ be given and denote $m:=\lambda(X,1)$. If $\tau_1 (X) =(t_0, \dotsc, t_m)$, $\nu_2 (X)=(k_1, \dotsc, k_m)$ and $\phi (t_{i-1}, t_i, k_i)= \big(s_0^i, s_1^i, \dotsc, s_{k_i}^i \big)$ for every $i=1, \dotsc, m$, then we define
\[ \tau_2 (X) = \big( 0, s_1^1, \dotsc, s_{k_1}^1, s_1^2, \dotsc, s_{k_2}^2, \dotsc, s_1^m, \dotsc, s_{k_m}^m \big). \]
Note that $s_{k_m}^m=t_m=1$ and $s_0^i = t_{i-1} = s_{k_{i-1}}^{i-1}$ for $i=2, \dotsc, m$. Moreover, if we relabel the elements of $\tau_2 (X)$ so that $\tau_2 (X) = (s_0, \dotsc, s_l)$, then $l=k_1+ \dots +k_m = \lambda(X,2)$.

If we keep repeating the process used to construct $\tau_2$, we obtain Borel measurable mappings $\tau_n \colon \mathsf{LC}(Q) \to T$, $n \in \N$, such that for every $X \in \mathsf{LC}(Q)$ and $n \in \N$, if $\tau_n (X) =(t_0, \dotsc, t_m)$, then $m=\lambda(X,n)$ and
\[ \tau_{n+1} (X) = \big( 0, s_1^1, \dotsc, s_{k_1}^1, s_1^2, \dotsc, s_{k_2}^2, \dotsc, s_1^m, \dotsc, s_{k_m}^m \big), \]
where $(k_1, \dotsc, k_m)=\nu_{n+1} (X)$ and $\big(s_0^i, s_1^i, \dotsc, s_{k_i}^i \big)=\phi (t_{i-1}, t_i, k_i)$ for every $i=1, \dotsc, m$.

For every $n \in \N$, define a mapping $\Phi_n \colon \mathsf{LC}(Q) \to \mathcal{K}(I \times Q)$ as follows: For any $X \in \mathsf{LC}(Q)$, if $\Psi_n (X)=(K_1, \dotsc, K_m)$ and $\tau_n (X) = (t_0, \dotsc, t_m)$, let
\[ \Phi_n (X) = \bigcup_{i=1}^m \big( [t_{i-1},t_i] \times K_i \big). \]
It is easy to see that $\Phi_n$ is Borel measurable for each $n \in \N$. For every $X \in \mathsf{LC}(Q)$ and $n \in \N$, define a mapping $\psi_n^X \colon I \to \mathcal{F}(X)$ by
\[ \psi_n^X (t) = \big( \Phi_n (X) \big)_t = \big\lbrace x \in Q; \ (t,x) \in \Phi_n (X) \big\rbrace = \big\lbrace x \in X; \ (t,x) \in \Phi_n (X) \big\rbrace. \]
Clearly, if $\Psi_n (X)=(K_1, \dotsc, K_m)$ and $\tau_n (X) = (t_0, \dotsc, t_m)$, then
\begin{equation*}
    \psi_n^X (t) = \begin{cases}
    K_1 & \textup{if } t=0 \\
    K_m & \textup{if } t=1 \\
    K_i & \textup{if } t_{i-1}<t<t_i, \ i = 1, \dotsc, m \\
    K_i \cup K_{i+1} & \textup{if } t=t_i, \ i = 1, \dotsc, m-1.
\end{cases} \ \ 
\end{equation*}
For every $X \in \mathsf{LC}(Q)$, it is fairly straightforward to verify that the mappings $\psi_1^X, \psi_2^X, \psi_3^X, \dotsc$ satisfy the assumptions of Theorem \ref{GeneralMapping}. Thus, the mapping $\Phi \colon \mathsf{LC}(Q) \to C(I,Q)$ given by
\[ \big\lbrace \Phi(X)(t) \big\rbrace = \bigcap_{n \in \N} \psi_n^X (t) \]
is well-defined and for every $X \in \mathsf{LC}(Q)$, the image of $I$ under $\Phi(X)$ is equal to $X$. It remains to show that $\Phi$ is Borel measurable. By Lemma \ref{SpaceOfCtsFctionsEmbeddedIntoHyperspace}, it will be enough if we prove that the mapping $\widehat{\Phi} \colon \mathsf{LC}(Q) \to \mathcal{K}(I \times Q)$ given by
\[ \widehat{\Phi} (X) = \mathrm{graph} \big( \Phi (X) \big) \]
is Borel measurable. However, we clearly have
\[ \widehat{\Phi} (X) = \bigcap_{n \in \N} \Phi_n (X) \]
for every $X \in \mathsf{LC}(Q)$. Thus, by Lemma \ref{IntersectionOfBorelMapsToHyperspaceIsBorel}, $\widehat{\Phi}$ is Borel measurable.
\end{proof}

\begin{corollary}\label{CorMainTheorem}
There is a Borel measurable bijection $\phi:C(I, Q)\to \mathsf{LC}(Q)$ such that $\phi(f)$ is homeomorphic to the image of $f$ for every $f \in C(I, Q)$.
\end{corollary}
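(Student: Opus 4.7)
My strategy is to realize $\phi$ via the Borel Schr\"oder-Bernstein theorem (a standard consequence of the Lusin-Souslin theorem on Borel images of Borel injections between standard Borel spaces, cf.\ \cite{Kechris}). In one direction, Theorem~\ref{MainTheorem} directly provides a map $\Phi \colon \mathsf{LC}(Q) \to C(I,Q)$ satisfying $\Phi(X)(I)=X$; this identity recovers $X$ from $\Phi(X)$, so $\beta := \Phi$ is a Borel injection from $\mathsf{LC}(Q)$ into $C(I,Q)$.

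For the reverse direction I would construct a Borel injection $\alpha \colon C(I,Q) \to \mathsf{LC}(Q)$ with $\alpha(f)$ homeomorphic to $f(I)$, as follows. Since $C(I,Q)$ is an uncountable standard Borel space, the Borel isomorphism theorem yields a Borel injection $g \colon C(I,Q) \to I$. Identifying $Q$ with $I \times Q$ via the obvious homeomorphism $I^{\mathbb{N}} \to I \times I^{\mathbb{N}}$, I set
\[ \alpha(f) := \{ g(f) \} \times f(I). \]
Then $\alpha(f)$ is a homeomorphic copy of the Peano continuum $f(I)$ and therefore belongs to $\mathsf{LC}(Q)$; Borel measurability of $\alpha$ follows from Lemma~\ref{CtsFunctionAndCompactSetMapstoImageIsCts} combined with the continuity of the map $(a,K) \mapsto \{ a \} \times K$; and injectivity of $\alpha$ is forced by injectivity of $g$, read off from the first coordinate.

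With both injections in hand, the usual Borel Schr\"oder-Bernstein construction yields the desired bijection: set $A_0 := C(I,Q) \setminus \beta(\mathsf{LC}(Q))$, $A_{n+1} := \beta(\alpha(A_n))$, and $A := \bigcup_{n \geq 0} A_n$ (all Borel by the Lusin-Souslin theorem), then define $\phi(f) := \alpha(f)$ for $f \in A$ and $\phi(f) := \beta^{-1}(f)$ for $f \notin A$. Standard arguments show $\phi$ is a Borel bijection onto $\mathsf{LC}(Q)$. The homeomorphism condition is then verified case by case: for $f \in A$ it holds by $\phi(f) = \alpha(f) \cong f(I)$, and for $f \notin A$ we have $f = \Phi(Y)$ with $Y := \beta^{-1}(f)$, whence $f(I) = \Phi(Y)(I) = Y = \phi(f)$, so $\phi(f)$ literally equals $f(I)$. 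I anticipate no serious obstacle; the key point making everything fit together is that $\beta$ is not merely an abstract Borel injection but satisfies the stronger identity $\beta(Y)(I) = Y$, which guarantees that both branches of the Schr\"oder-Bernstein construction automatically respect the homeomorphism type.
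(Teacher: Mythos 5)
Your proposal is correct and follows essentially the same route as the paper: the paper likewise takes $\Phi$ from Theorem~\ref{MainTheorem} as the injection into $C(I,Q)$, builds the reverse injection as $f \mapsto \{\theta(f)\}\times f(I)$ for a Borel injection $\theta\colon C(I,Q)\to[0,1]$, and invokes a standard Cantor--Bernstein argument. Your explicit check that both branches of the Schr\"oder--Bernstein construction preserve the homeomorphism type (using $\Phi(Y)(I)=Y$) is a welcome detail the paper leaves implicit.
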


\begin{proof}
Similarly to \cite[326-327]{Gao}.
Let $\Phi \colon \mathsf{LC}(Q) \to C(I,Q)$ be the mapping provided by Theorem \ref{MainTheorem}. Clearly, $\Phi$ is a Borel measurable injection. On the other hand, it is possible to construct a Borel measurable injection $\chi: C(I, Q) \to \mathsf{LC}(Q)$ such that $\chi(f)$ homeomorphic to $f(I)$ for each $f \in C(I, Q)$. This can be done as follows: Fix a Borel measurable injection $\theta : C(I, Q)\to [0,1]$. Then let $\chi(f) = \{ \theta (f) \} \times f(I)$ for every $f \in C(I, Q)$.

A standard Cantor Bernstein argument applied to $\Phi$ and $\chi$ gives us the desired Borel measurable bijection $\phi:C(I, Q)\to \mathsf{LC}(Q)$.
\end{proof}

\section{Peano continua are arcwise connected in a Borel way}

It is well-known that Peano continua are arcwise connected. In this section we prove that an arc connecting two points in a Peano continuum can be chosen in a Borel measurable way.

\begin{lemma}\label{BorelChoiceOfMinimumPoint}
Let $Y$ be a compact metrizable space and $\varphi \colon Y \to \R$ a continuous function. There is a Borel measurable mapping $\nu \colon \mathcal{K}(Y) \to Y$ such that $\nu (K) \in K$ and $\displaystyle \varphi \big( \nu (K) \big) = \min \big\lbrace \varphi(y) \, ; \, y \in K \big\rbrace$ for every $K \in \mathcal{K}(Y)$.
\end{lemma}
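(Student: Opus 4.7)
The plan is to produce $\nu$ as a composition $\sigma \circ M$, where $M \colon \mathcal{K}(Y) \to \mathcal{K}(Y)$ sends each $K$ to the nonempty compact set of minimizers of $\varphi$ on $K$, and $\sigma \colon \mathcal{K}(Y) \to Y$ is a Borel measurable selection sending each compact set to one of its points. The existence of such a $\sigma$ (via the Kuratowski and Ryll-Nardzewski selection theorem) has already been invoked in the proof of Lemma \ref{BorelSequenceOfSubChains}. So the real task is to construct $M$ in a Borel way.

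First I would verify that the real-valued function $m \colon \mathcal{K}(Y) \to \R$ defined by $m(K) = \min_{y \in K} \varphi(y)$ is continuous. Fixing any compatible metric on $Y$, this is a direct consequence of the uniform continuity of $\varphi$ on the compact space $Y$ combined with the definition of the Hausdorff metric: a Hausdorff-small perturbation of $K$ moves $m(K)$ by at most the corresponding modulus of continuity of $\varphi$.

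Next, I would express the minimizer set as the intersection
\[ M(K) \;=\; K \,\cap\, \varphi^{-1}\big(\{m(K)\}\big). \]
The map $K \mapsto \{m(K)\}$ is continuous from $\mathcal{K}(Y)$ into $\mathcal{K}(\R)$ since $m$ is continuous. Applying Lemma \ref{CompactMapstoPreimageIsBorel} (with the compact metric space $Y$ in the role of its $X$, with $\R$ in the role of its $Y$, and with the constant input function $\varphi$), the assignment $K \mapsto \varphi^{-1}(\{m(K)\})$ is Borel measurable into $\mathcal{K}(Y)$; the domain restriction $f^{-1}(K) \ne \emptyset$ is automatic because $m(K) \in \varphi(K) \subseteq \varphi(Y)$. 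Since $K$ is compact and $\varphi$ continuous, $M(K)$ is nonempty, so the pair $(K, \varphi^{-1}(\{m(K)\}))$ lies in the domain of the intersection map of Lemma \ref{intersectionIsBorel}. Composing, $K \mapsto M(K)$ is a Borel measurable mapping from $\mathcal{K}(Y)$ to $\mathcal{K}(Y)$.

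Finally, I would set $\nu := \sigma \circ M$. This is Borel measurable as the composition of Borel measurable mappings, and by construction $\nu(K) \in M(K) \subseteq K$ with $\varphi(\nu(K)) = m(K)$. The only subtlety is making sure the hypotheses of Lemmata \ref{CompactMapstoPreimageIsBorel} and \ref{intersectionIsBorel} are met (i.e.\ that the relevant preimage and intersection are always nonempty), which follows at once from $m(K) \in \varphi(K)$; everything else is routine plumbing through the hyperspace lemmas already established.
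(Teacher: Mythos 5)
Your proposal is correct and follows essentially the same route as the paper: compute the (continuous) minimum value function, realize the minimizer set as the intersection of $K$ with a preimage under $\varphi$ of a compact subset of $\R$ (the paper uses the sublevel set $\varphi^{-1}([s,\mu(K)])$ where you use the level set $\varphi^{-1}(\{m(K)\})$ — an immaterial difference), invoke Lemmata \ref{CompactMapstoPreimageIsBorel} and \ref{intersectionIsBorel} for Borel measurability, and finish with a Kuratowski--Ryll-Nardzewski selection $\sigma$. No gaps.
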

\begin{proof}
Define $\mathcal{D}:= \big\lbrace (K,t) \in \mathcal{K}(Y) \times \R \, ; \ \exists \, y \in K : \varphi(y) \leq t \big\rbrace$ and define a mapping $\xi \colon \mathcal{D} \to \mathcal{K}(Y)$ by $\xi (K,t) = \big\lbrace y \in K \, ; \ \varphi (y) \leq t \big\rbrace$. Fix a number $s \in \R$ such that $\varphi (y) > s$ for every $y \in Y$. Then we have $\xi (K,t) = K \cap \varphi^{-1} \big( [s , t] \big)$ for all $(K,t) \in \mathcal{D}$. By Lemmata \ref{intersectionIsBorel} and \ref{CompactMapstoPreimageIsBorel}, it easily follows that $\xi$ is Borel measurable. The function $\mu \colon \mathcal{K}(Y) \to \R$ given by $\mu(K) = \min \big\lbrace \varphi(y) ; \, y \in K \big\rbrace$ is continuous (this is a very easy exercise). Let $\sigma \colon \mathcal{K}(Y) \to Y$ be a Borel measurable mapping satisfying $\sigma (L) \in L$ for every $L \in \mathcal{K}(Y)$. Then we can define the desired mapping by $\nu (K) = \sigma \big( \xi \big( K, \mu (K) \big) \big)$.
\end{proof}

By \cite[4.33]{Nadler}, for every compact metrizable space $Z$, there exists a continuous function $\varphi \colon \mathcal{K}(Z) \to \R$ such that $\varphi \big( \{ z \} \big)=0$ for each $z \in Z$ and $\varphi(K) < \varphi(L)$ whenever $K,L \in \mathcal{K}(Z)$ satisfy $K \subsetneqq L$. Such a function $\varphi$ is called a Whitney map.

\begin{lemma}\label{BorelChoiceOfMinimalElementInclusion}
There exists a Borel measurable mapping $\Upsilon \colon \mathcal{K} \big( \mathcal{K} (I) \big) \to \mathcal{K} (I)$ such that for every $\mathcal{L} \in \mathcal{K} \big( \mathcal{K} (I) \big)$, the set $\Upsilon (\mathcal{L})$ is a minimal element of the family $\mathcal{L}$ with respect to set inclusion.
\end{lemma}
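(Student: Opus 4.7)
The plan is to combine the two tools that appear just above the statement: a Whitney map on $\mathcal{K}(I)$ together with Lemma \ref{BorelChoiceOfMinimumPoint} applied in the hyper-hyperspace.

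More precisely, I would fix a Whitney map $\varphi \colon \mathcal{K}(I) \to \R$, which exists by the result of \cite[4.33]{Nadler} quoted just before the statement. Recall its defining property: whenever $K \subsetneqq L$ in $\mathcal{K}(I)$, one has $\varphi(K) < \varphi(L)$. Now apply Lemma \ref{BorelChoiceOfMinimumPoint} with $Y := \mathcal{K}(I)$ (which is a compact metrizable space) and with the continuous function $\varphi$. This yields a Borel measurable mapping
\[ \nu \colon \mathcal{K}\big(\mathcal{K}(I)\big) \to \mathcal{K}(I) \]
such that for every $\mathcal{L} \in \mathcal{K}(\mathcal{K}(I))$, we have $\nu(\mathcal{L}) \in \mathcal{L}$ and $\varphi(\nu(\mathcal{L})) = \min\{\varphi(K) \, ; \ K \in \mathcal{L}\}$.

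I would then set $\Upsilon := \nu$ and verify the required minimality property: if there existed some $K' \in \mathcal{L}$ with $K' \subsetneqq \Upsilon(\mathcal{L})$, then the Whitney property would give $\varphi(K') < \varphi(\Upsilon(\mathcal{L}))$, contradicting the choice of $\Upsilon(\mathcal{L})$ as a $\varphi$-minimizer in $\mathcal{L}$. Hence $\Upsilon(\mathcal{L})$ admits no proper subset belonging to $\mathcal{L}$, i.e. it is a minimal element of $\mathcal{L}$ with respect to inclusion. Borel measurability of $\Upsilon$ is immediate from that of $\nu$.

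There is essentially no hard step here: the only subtlety is recognizing that a Whitney map converts the (non-measurable-looking) task of selecting an inclusion-minimal element into the purely scalar problem of minimizing a continuous real-valued function over a compact set, which the previous lemma handles in a Borel measurable way. No additional properties of $I$ are used beyond compactness and metrizability of $\mathcal{K}(I)$, so the same argument would work with $I$ replaced by any compact metrizable space.
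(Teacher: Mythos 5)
Your proposal is correct and is essentially identical to the paper's own proof: fix a Whitney map $\varphi$ on $\mathcal{K}(I)$, apply Lemma \ref{BorelChoiceOfMinimumPoint} with $Y=\mathcal{K}(I)$ to select a $\varphi$-minimizer in a Borel way, and observe that the Whitney property forces any such minimizer to be inclusion-minimal. The verification step you spell out is exactly the ``easy to see'' remark in the paper.
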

\begin{proof}
Let $\varphi \colon \mathcal{K}(I) \to \R$ be a Whitney map. By Lemma \ref{BorelChoiceOfMinimumPoint}, there is a Borel measurable mapping $\nu \colon \mathcal{K} \big( \mathcal{K} (I) \big) \to \mathcal{K} (I)$ satisfying $\nu (\mathcal{L}) \in \mathcal{L}$ and $\varphi \big( \nu (\mathcal{L}) \big) = \min \big\lbrace \varphi(K) \, ; \, K \in \mathcal{L} \big\rbrace$ for all $\mathcal{L} \in \mathcal{K} \big( \mathcal{K} (I) \big)$. Since $\varphi$ is a Whitney map, it is easy to see that $\nu$ is the desired mapping.
\end{proof}

\begin{lemma}\label{AssigningTheFamilyOfAdmissibleCompactaIsBorel1}
For every continuous mapping $f \colon I \to Q$, the set
\[ \Lambda(f) := \big\lbrace K \in \mathcal{K}(I) \, ; \ f(\min K) = f(0) \, , \ f(\max K)=f(1) \big\rbrace \]
is nonempty and compact. Moreover, the mapping $\Lambda \colon C(I,Q) \to \mathcal{K}\big( \mathcal{K}(I) \big)$ defined by $f \mapsto \Lambda (f)$ is Borel measurable.
\end{lemma}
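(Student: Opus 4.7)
The plan is to split the lemma into three manageable pieces: nonemptiness, compactness of each $\Lambda(f)$, and finally the Borel measurability of the assignment $f\mapsto \Lambda(f)$, all of which should follow without serious difficulty once we set things up correctly.

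For nonemptiness, I would simply observe that $K=I$ satisfies $\min K=0$ and $\max K=1$, so trivially $I\in\Lambda(f)$ for every $f$. For compactness, I would use that the maps $\mathrm{m},\mathrm{M}\colon\mathcal{K}(I)\to I$ defined by $\mathrm{m}(K)=\min K$ and $\mathrm{M}(K)=\max K$ are continuous on $\mathcal{K}(I)$ (an easy Hausdorff-metric estimate). Composing with the continuous map $f$, the two conditions defining $\Lambda(f)$ say that the continuous functions $K\mapsto f(\min K)$ and $K\mapsto f(\max K)$ take the fixed values $f(0)$ and $f(1)$ respectively, so $\Lambda(f)$ is a closed subset of the compact space $\mathcal{K}(I)$, hence compact.

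For the Borel measurability I would apply Lemma \ref{ClosedSetInProductOfPolishAndCompact} to the set
\[ \mathcal{G}:=\big\lbrace (f,K)\in C(I,Q)\times \mathcal{K}(I)\,;\ f(\min K)=f(0),\ f(\max K)=f(1) \big\rbrace. \]
The key point is that $\mathcal{G}$ is closed in $C(I,Q)\times\mathcal{K}(I)$. Indeed, the evaluation map $C(I,Q)\times I\to Q$, $(f,t)\mapsto f(t)$, is jointly continuous on the compact space $I$, so the four maps $(f,K)\mapsto f(\min K)$, $(f,K)\mapsto f(0)$, $(f,K)\mapsto f(\max K)$, $(f,K)\mapsto f(1)$ from $C(I,Q)\times\mathcal{K}(I)$ to $Q$ are all continuous. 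The set $\mathcal{G}$ is the preimage of the (closed) diagonal under two such pairs, hence closed.

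Now $C(I,Q)$ is Polish, $\mathcal{K}(I)$ is compact metrizable, and the section of $\mathcal{G}$ over $f$ is exactly $\Lambda(f)$, which we have just shown to be nonempty. Therefore Lemma \ref{ClosedSetInProductOfPolishAndCompact} applies and yields that $f\mapsto \Lambda(f)$ is (even upper semicontinuous, and in particular) Borel measurable from $C(I,Q)$ into $\mathcal{K}(\mathcal{K}(I))$. The only minor obstacle is the verification of joint continuity of $(f,K)\mapsto f(\min K)$; once continuity of $\min$ and $\max$ on $\mathcal{K}(I)$ and joint continuity of evaluation on the compact space $I$ are in hand, the rest of the argument is routine.
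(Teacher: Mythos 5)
Your proposal is correct and follows essentially the same route as the paper: both establish nonemptiness via $I\in\Lambda(f)$, prove that the set $\mathcal{G}$ (called $\mathcal{F}$ in the paper) is closed by composing the continuous maps $\min,\max\colon\mathcal{K}(I)\to I$ with evaluation, deduce compactness of the sections from compactness of $\mathcal{K}(I)$, and conclude Borel measurability from Lemma \ref{ClosedSetInProductOfPolishAndCompact}. No gaps.
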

\begin{proof}
For every $f \in C(I,Q)$, the set $\Lambda (f)$ is nonempty since it obviously contains $I$. Let
\[ \mathcal{F}:= \big\lbrace (f,K) \in C(I,Q) \times \mathcal{K}(I) \, ; \ f(\min K) = f(0) \, , \ f(\max K)=f(1) \big\rbrace. \]
\begin{claim}\label{claimmm}
The set $\mathcal{F}$ is closed in $C(I,Q) \times \mathcal{K}(I)$.
\end{claim}
\begin{claimproof}
Define functions $M_1 \colon \mathcal{K}(I) \to I$, $M_2 \colon \mathcal{K}(I) \to I$ by $M_1 (K) = \min K$ and $M_2 (K) = \max K$. Define mappings $\Psi_1, \Psi_2, \Psi_3, \Psi_4 \colon C(I,Q) \times \mathcal{K}(I) \to Q$ by $\Psi_1 (f,K) = f(M_1(K))$, $\Psi_2 (f,K) = f(0)$, $\Psi_3 (f,K) = f(M_2(K))$ and $\Psi_4 (f,K) = f(1)$. Trivially, $\Psi_2$ and $\Psi_4$ are continuous. Moreover, since $M_1$ and $M_2$ are continuous, it is easy to see that so are $\Psi_1$ and $\Psi_3$. Therefore, the set
\[ \big\lbrace (f,K) \in C(I,Q) \times \mathcal{K}(I) \, ; \ \Psi_1 (f,K)=\Psi_2 (f,K) \, , \ \Psi_3 (f,K)=\Psi_4 (f,K) \big\rbrace \]
is closed. Clearly, this set is equal to $\mathcal{F}$. \claimend
\end{claimproof}
Since $\mathcal{F}$ is closed and $\mathcal{K}(I)$ is compact, the vertical sections of $\mathcal{F}$ are compact. In other words, $\Lambda (f)$ is compact for every $f \in C(I,Q)$. Finally, $\Lambda$ is Borel measurable by Lemma \ref{ClosedSetInProductOfPolishAndCompact}.
\end{proof}

For every $K \in \mathcal{K}(I)$, let
\begin{align*}
    \alpha (K):&= \big\lbrace (s,t) \in I \times I \, ; \ s<t \textup{ and } [s,t] \cap K = \{ s,t \} \big\rbrace\\
    &= \big\lbrace (s,t) \in K \times K \, ; \ s<t \textup{ and } \forall \, u \in I: s<u<t \implies u \notin K \big\rbrace.
\end{align*}

\begin{lemma}\label{AssigningTheFamilyOfAdmissibleCompactaIsBorel2}
For every continuous mapping $f \colon I \to Q$, the set
\[ \Gamma (f):= \big\lbrace K \in \mathcal{K}(I) \, ; \ \forall \, (s,t) \in \alpha(K) : f(s)=f(t) \big\rbrace \]
is nonempty and compact. Moreover, the mapping $\Gamma \colon C(I,Q) \to \mathcal{K} \big( \mathcal{K}(I) \big)$ defined by $f \mapsto \Gamma (f)$ is Borel measurable.
\end{lemma}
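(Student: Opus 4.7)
I will dispense with nonemptiness first: taking $K = I$, there is no pair $s < t$ in $I$ with $[s,t] \cap I = \{s,t\}$, so $\alpha(I) = \emptyset$ and the defining condition on $K$ is vacuously satisfied, giving $I \in \Gamma(f)$ for every $f \in C(I, Q)$. For compactness and Borel measurability, my plan is to show that the set
\[ \mathcal{G} := \big\lbrace (f,K) \in C(I,Q) \times \mathcal{K}(I) \, ; \ K \in \Gamma(f) \big\rbrace \]
is closed in $C(I,Q) \times \mathcal{K}(I)$. Once that is established, each vertical section $\Gamma(f)$ is closed in the compact space $\mathcal{K}(I)$, hence compact, and Lemma~\ref{ClosedSetInProductOfPolishAndCompact} (applied with $X = C(I,Q)$ Polish and $Y = \mathcal{K}(I)$ compact metrizable) immediately yields Borel measurability of $f \mapsto \Gamma(f)$.

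To show $\mathcal{G}$ is closed, suppose $(f_n, K_n) \to (f, K)$ with $(f_n, K_n) \in \mathcal{G}$ for every $n$, and fix any $(s, t) \in \alpha(K)$. The plan is to produce, for all large $n$, pairs $(s_n, t_n) \in \alpha(K_n)$ with $s_n \to s$ and $t_n \to t$; then $K_n \in \Gamma(f_n)$ gives $f_n(s_n) = f_n(t_n)$, and uniform convergence $f_n \to f$ yields $f(s) = f(t)$, proving $K \in \Gamma(f)$. To construct the pairs, fix any $m \in (s, t)$. Since $(s, t) \cap K = \emptyset$, one has $m \notin K$, so $d(m, K) > 0$; Hausdorff convergence then forces $m \notin K_n$ for all large $n$. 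For such $n$, I would set
\[ s_n := \max\big( K_n \cap [0,m] \big), \qquad t_n := \min\big( K_n \cap [m,1] \big), \]
which exist and satisfy $s_n < m < t_n$ because $s, t \in K$ guarantees that $K_n$ has points near $s$ and near $t$ once $n$ is large; by construction $(s_n, t_n) \cap K_n = \emptyset$, so $(s_n, t_n) \in \alpha(K_n)$.

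The convergences $s_n \to s$ and $t_n \to t$ follow from Hausdorff convergence combined with $K \cap (s, t) = \emptyset$: any subsequential limit of $s_n$ lies in $K \cap [0, m]$, and since $K$ misses the interval $(s, m] \subseteq (s, t)$, such a limit cannot exceed $s$, while $\liminf_n s_n \geq s$ is obtained by approximating $s$ by points of $K_n$ lying below $m$; the case of $t_n$ is symmetric. I expect the main obstacle to be this small but essential subtlety: an arbitrary approximation in $K_n \times K_n$ to $(s, t)$ need not itself be an element of $\alpha(K_n)$, so the construction must force $(s_n, t_n)$ to be a genuine gap of $K_n$. The midpoint $m$, whose positive distance from $K$ persists as positive distance from $K_n$ for large $n$, is exactly the device that accomplishes this.
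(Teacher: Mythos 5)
Your proposal is correct and follows essentially the same route as the paper: both reduce the lemma to closedness of $\big\lbrace (f,K) ;\ K\in\Gamma(f)\big\rbrace$ in $C(I,Q)\times\mathcal{K}(I)$ and then invoke compactness of $\mathcal{K}(I)$ together with Lemma~\ref{ClosedSetInProductOfPolishAndCompact}, and your key device of cutting the gap at an interior point and taking $\max\big(K_n\cap[0,m]\big)$, $\min\big(K_n\cap[m,1]\big)$ to manufacture a genuine element of $\alpha(K_n)$ is exactly the paper's $u=\max\big(L\cap[0,s+\delta]\big)$, $v=\min\big(L\cap[t-\delta,1]\big)$ move. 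The only cosmetic difference is that you verify sequential closedness directly while the paper shows the complement is open.
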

\begin{proof}
Clearly, $\alpha(I)=\emptyset$. Thus, for every $f \in C(I,Q)$, the set $\Gamma (f)$ is nonempty as it contains $I$. Let $\mathcal{F}:= \big\lbrace (f,K) \in C(I,Q) \times \mathcal{K}(I) \, ; \ \forall \, (s,t) \in \alpha(K) : f(s)=f(t) \big\rbrace$.
\begin{claim}
The set $\mathcal{F}$ is closed in $C(I,Q) \times \mathcal{K}(I)$.
\end{claim}
\begin{claimproof}
Let us prove that the set $\mathcal{G}:= \big( C(I,Q) \times \mathcal{K}(I) \big) \setminus \mathcal{F}$ is open. Given $(f,K) \in \mathcal{G}$, fix a pair $(s,t) \in \alpha (K)$ with $f(s) \neq f(t)$. Let $U$ and $V$ be disjoint open subsets of $Q$ satisfying $f(s) \in U$ and $f(t) \in V$. By the continuity of $f$, there is $\delta > 0$ such that $f(u) \in U$ for every $u \in I$ with $|u-s| \leq \delta$ and $f(v) \in V$ for every $v \in I$ with $|v-t| \leq \delta$. Since $s<t$, we can assume that $s+\delta < t-\delta$. Let $J_1 := I \cap [s- \delta , s+ \delta]$ and $J_2 := I \cap [t- \delta , t+ \delta]$. Then $\mathcal{U} := \big\lbrace g \in C(I,Q) \, ; \ g(J_1) \subseteq U , \ g(J_2) \subseteq V \big\rbrace$ contains $f$ and it is clear (as $J_1$, $J_2$ are compact) that $\mathcal{U}$ is open in $C(I,Q)$. Furthermore, the set
\begin{align*}
    \mathcal{V}:=\big\lbrace L \in \mathcal{K}(I) \, ; \ &L \subseteq [0,s+\delta) \cup (t-\delta, 1] ,\\
    &L \cap (s-\delta, s+\delta) \neq \emptyset, \ L \cap (t-\delta, t+\delta) \neq \emptyset\big\rbrace
\end{align*}
is open in $\mathcal{K}(I)$ and it contains $K$. Hence, $\mathcal{U} \times \mathcal{V}$ is a neighbourhood of $(f,K)$. It remains to verify that $\mathcal{U} \times \mathcal{V} \subseteq \mathcal{G}$. Let $(g,L) \in \mathcal{U} \times \mathcal{V}$ be given and let $u:=\max \big( L \cap [0,s+\delta] \big)$, $v:=\min \big( L \cap [t-\delta,1] \big)$. It is easy to see that $(u,v) \in \alpha(L)$. Also, since $u \in J_1$ and $v \in J_2$, we have $g(u) \in U$ and $g(v) \in V$. In particular, $g(u) \neq g(v)$, which proves that $(g,L) \in \mathcal{G}$. \claimend
\end{claimproof}
Since $\mathcal{F}$ is closed and $\mathcal{K}(I)$ is compact, the vertical sections of $\mathcal{F}$ are compact. In other words, $\Gamma (f)$ is compact for every $f \in C(I,Q)$. Finally, $\Gamma$ is Borel measurable by Lemma \ref{ClosedSetInProductOfPolishAndCompact}.
\end{proof}

\begin{lemma}\label{TransformationOfCtsMappings}
Let $\mathcal{B}:=\big\lbrace (X,x,y) \in \mathsf{LC}(Q) \times Q \times Q \, ; \ x,y \in X \big\rbrace$. There is a Borel measurable mapping $T \colon \mathcal{B} \to C(I,Q)$ such that for every $(X,x,y) \in \mathcal{B}$, if $T (X,x,y) = h$, then $h(0)=x$, $h(1)=y$ and $h(I)=X$.
\end{lemma}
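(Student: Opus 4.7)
The plan is to invoke Theorem \ref{MainTheorem} to obtain, for each $X \in \mathsf{LC}(Q)$, a continuous surjection $f_X := \Phi(X) \colon I \to X$, and then to reparametrize $f_X$ so that the resulting map starts at $x$ and ends at $y$ while still tracing out all of $X$. The basic picture is that of ``anchor the surjection at both ends'': traverse $f_X$ backwards from some $s_0 \in f_X^{-1}(x)$ to $0$, then forwards across the whole of $I$, then backwards from $1$ to some $s_1 \in f_X^{-1}(y)$.

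First, I would Borel measurably select the parameters $s_0, s_1 \in I$ with $f_X(s_0) = x$ and $f_X(s_1) = y$. The map $\mathcal{B} \to C(I,Q) \times \mathcal{K}(Q)$ given by $(X,x,y) \mapsto (f_X, \{x\})$ is Borel measurable, and $f_X^{-1}(\{x\}) \neq \emptyset$ since $x \in X = f_X(I)$. Hence by Lemma \ref{CompactMapstoPreimageIsBorel} the map $(X,x,y) \mapsto f_X^{-1}(\{x\}) \in \mathcal{K}(I)$ is Borel measurable; composing with the continuous map $\min \colon \mathcal{K}(I) \to I$ gives a Borel measurable selector $s_0(X,x,y) := \min f_X^{-1}(\{x\})$. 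The parameter $s_1(X,x,y) := \min f_X^{-1}(\{y\})$ is defined analogously.

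Next, I would write down the reparametrization explicitly. Set
\[ h(t) := \begin{cases} f_X\bigl(s_0 (1 - 3t)\bigr) & \text{if } t \in [0, \tfrac{1}{3}], \\ f_X(3t - 1) & \text{if } t \in [\tfrac{1}{3}, \tfrac{2}{3}], \\ f_X\bigl(1 - 3(1 - s_1)(t - \tfrac{2}{3})\bigr) & \text{if } t \in [\tfrac{2}{3}, 1]. \end{cases} \]
The piecewise affine argument takes the values $s_0, 0, 0, 1, 1, s_1$ at the points $0, \tfrac{1}{3}^-, \tfrac{1}{3}^+, \tfrac{2}{3}^-, \tfrac{2}{3}^+, 1$, so the two joints are matched and $h \in C(I, Q)$. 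By construction $h(0) = f_X(s_0) = x$ and $h(1) = f_X(s_1) = y$, while the middle third already gives $h(I) \supseteq f_X(I) = X$, and the other two pieces stay inside $f_X(I) = X$, hence $h(I) = X$.

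Finally, I would verify Borel measurability of $T(X,x,y) := h$. Writing $h(t) = f_X\bigl(g(t,s_0,s_1)\bigr)$ where $g \colon I \times I \times I \to I$ is the evident continuous piecewise-affine function, a standard triangle-inequality argument using the uniform continuity of each $f \in C(I,Q)$ on the compact $I$ shows that $(f, s_0, s_1) \mapsto f \circ g(\,\cdot\,, s_0, s_1) \in C(I,Q)$ is continuous. Composing this continuous map with the Borel measurable assignment $(X,x,y) \mapsto (f_X, s_0(X,x,y), s_1(X,x,y))$ yields that $T$ is Borel measurable. No real obstacle is expected: the content is to reduce everything to (i) Theorem \ref{MainTheorem}, (ii) the Borel measurability of preimages from Lemma \ref{CompactMapstoPreimageIsBorel}, and (iii) an elementary explicit formula whose continuous dependence on its data is routine.
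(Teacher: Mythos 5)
Your proposal is correct and follows essentially the same route as the paper: apply Theorem \ref{MainTheorem}, select parameters in $f_X^{-1}(\{x\})$ and $f_X^{-1}(\{y\})$ Borel measurably via Lemma \ref{CompactMapstoPreimageIsBorel}, and compose with the (continuous) three-piece reparametrization $(f,a,b)\mapsto f_{a,b}$, whose formula matches the paper's exactly. The only cosmetic difference is that you use the continuous $\min$ selector on $\mathcal{K}(I)$ where the paper invokes a generic Kuratowski--Ryll-Nardzewski selector; both work.
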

\begin{proof}
For every $f \in C(I,Q)$ and every two points $a,b \in I$, define a mapping $f_{a,b} \colon I \to Q$ by
\begin{equation*}
    f_{a,b} (t) = \begin{cases}
    f(a-3at) & \textup{if } 0 \leq t < \frac{1}{3} \\
    f(3t-1) & \textup{if } \frac{1}{3} \leq t \leq \frac{2}{3} \\
    f(3bt-3t-2b+3) & \textup{if } \frac{2}{3} < t \leq 1 .
\end{cases}
\end{equation*}
It is easy to see that $f_{a,b}$ is well-defined and continuous. Moreover, we have $f_{a,b} (0)=f(a)$, $f_{a,b} (1)=f(b)$ and $f_{a,b} (I) = f(I)$. Let us define a mapping $\phi \colon C(I,Q) \times I \times I \to C(I,Q)$ by $\phi (f,a,b)=f_{a,b}$. It is straightforward to prove that $\phi$ is continuous. Let $\mathcal{A}:= \big\lbrace (f,x) \in C(I,Q) \times Q \, ; \ x \in f(I) \big\rbrace$ and define a mapping $\gamma \colon \mathcal{A} \to \mathcal{K}(I)$ by $\gamma (f,x) = f^{-1} \big( \{ x \} \big)$. By Lemma \ref{CompactMapstoPreimageIsBorel}, $\gamma$ is Borel measurable. Let $\sigma \colon \mathcal{K}(I) \to I$ be a Borel measurable mapping such that $\sigma(K) \in K$ for every $K \in \mathcal{K}(I)$. Then $\tau := \sigma \circ \gamma$ is a Borel measurable mapping such that $f \big( \tau (f,x) \big) = x$ for every $(f,x) \in \mathcal{A}$. Let $\Phi$ be the mapping from Theorem \ref{MainTheorem}. Finally, define the desired mapping $T \colon \mathcal{B} \to C(I,Q)$ by
\[ T(X,x,y) = \phi \Big( \Phi(X), \tau\big( \Phi(X),x \big), \tau\big( \Phi(X),y \big) \Big). \]
It is easy to verify that all of the requirements we have on $T$ are met.
\end{proof}

Recall that a mapping $f \colon X \to Y$, where $X$ and $Y$ are topological spaces, is said to be monotone if $f$ is continuous and $f^{-1} \big( \{ y \} \big)$ is connected for every $y \in f(X)$. The following lemma is an immediate consequence of \cite[8.22]{Nadler}.

\begin{lemma}\label{MonotoneImageOfIntervalIsArc}
If $g \colon I \to Q$ is a monotone mapping such that $g(0)\neq g(1)$, then $g(I)$ is an arc.
\end{lemma}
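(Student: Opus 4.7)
The plan is to identify $g(I)$ with a quotient of $I$ obtained by collapsing the point-preimages of $g$, each of which is a closed subinterval of $I$ by monotonicity. First I would set $J := g(I)$ and define an equivalence relation $\sim$ on $I$ by $s \sim t \iff g(s) = g(t)$. Since each equivalence class $g^{-1}(\{g(s)\})$ is a nonempty compact connected subset of $I$, it is a closed subinterval of $I$ (possibly a single point). The continuous surjection $g$ factors as $g = \tilde g \circ q$, where $q \colon I \to I/\!\sim$ is the quotient projection and $\tilde g \colon I/\!\sim \to J$ is a continuous bijection; because $I/\!\sim$ is compact and $J \subseteq Q$ is Hausdorff, $\tilde g$ is a homeomorphism.

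It then remains to show that $I/\!\sim$ is itself homeomorphic to $I$. Because the equivalence classes are pairwise disjoint closed intervals of $I$, the quotient inherits a natural linear order from $I$ (set $[s] \leq [t]$ iff $\max[s] \leq \min[t]$ or $[s]=[t]$), the quotient topology coincides with the order topology, and the assumption $g(0) \neq g(1)$ ensures that $[0]$ and $[1]$ are genuine distinct endpoints. A compact, connected, metrizable, linearly ordered topological space with exactly two endpoints is homeomorphic to $[0,1]$; hence $J$ is an arc.

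The main obstacle I anticipate is the verification that the quotient topology on $I/\!\sim$ coincides with the order topology; this reduces to checking that preimages under $q$ of open order-intervals are open subintervals of $I$, which uses only that the classes are intervals and that $q$ is a closed map. In any case, this entire argument is precisely the content of \cite[8.22]{Nadler}, which states that a monotone continuous image of an arc is either a point or an arc, so I would simply invoke that result and observe that $g(0) \neq g(1)$ rules out the degenerate alternative.
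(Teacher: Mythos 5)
Your proposal is correct and ends up in the same place as the paper, which gives no proof of this lemma at all beyond the remark that it is an immediate consequence of \cite[8.22]{Nadler} --- precisely the citation you fall back on. Your quotient-space sketch is moreover a sound self-contained proof of that cited fact; the one step you flag (quotient topology $=$ order topology on $I/\!\sim$) follows at once since the identity map from the compact quotient topology to the Hausdorff order topology is a continuous bijection, hence a homeomorphism.
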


\begin{theorem}\label{ArcsInPeanoContinua}
Let $\mathcal{D}:= \big\lbrace (X,x,y) \in \mathsf{LC}(Q) \times Q \times Q \, ; \ x,y \in X, \, x \neq y \big\rbrace$. There exists a Borel measurable mapping $A \colon \mathcal{D} \to \mathcal{K}(Q)$ such that for every $(X,x,y) \in \mathcal{D}$, the set $A(X,x,y)$ is an arc contained in $X$ whose endpoints are $x$ and $y$.
\end{theorem}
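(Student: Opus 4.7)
The plan is to realize the arc as the image of a minimally-pruned path from $x$ to $y$. First, I would use Lemma~\ref{TransformationOfCtsMappings} to obtain, in a Borel way, a continuous map $f:=T(X,x,y)\in C(I,Q)$ with $f(0)=x$, $f(1)=y$ and $f(I)=X$. Consider the subfamily $\mathcal{L}(f):=\Lambda(f)\cap\Gamma(f)\subseteq\mathcal{K}(I)$; it is compact, and always contains $I$ itself (since $\alpha(I)=\emptyset$), hence is nonempty. Combining Lemmata~\ref{AssigningTheFamilyOfAdmissibleCompactaIsBorel1}, \ref{AssigningTheFamilyOfAdmissibleCompactaIsBorel2} and \ref{intersectionIsBorel} shows that the assignment $f\mapsto\mathcal{L}(f)$ is Borel measurable into $\mathcal{K}(\mathcal{K}(I))$. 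Applying $\Upsilon$ from Lemma~\ref{BorelChoiceOfMinimalElementInclusion} selects, Borel measurably, an inclusion-minimal $K=K_f\in\mathcal{L}(f)$, and I set $A(X,x,y):=f(K_f)$. Borel measurability of $A$ then follows from Lemma~\ref{CtsFunctionAndCompactSetMapstoImageIsCts}, so the entire measurability part is automatic once the construction is in place.

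To see that $f(K_f)$ is an arc with endpoints $x$ and $y$, I would construct an auxiliary continuous map $g\colon I\to Q$ that equals $f$ on $K_f$, is constantly $x$ on $[0,\min K_f]$ (justified by $K_f\in\Lambda(f)$), is constantly $y$ on $[\max K_f,1]$ (same), and is constantly $f(s)=f(t)$ on each closed gap $[s,t]$ with $(s,t)\in\alpha(K_f)$ (justified by $K_f\in\Gamma(f)$). The pieces agree on overlaps, so $g$ is well-defined and continuous; moreover $g(0)=x\neq y=g(1)$ and $g(I)=f(K_f)$. By Lemma~\ref{MonotoneImageOfIntervalIsArc}, it will then suffice to show that $g$ is monotone, since a monotone continuous surjection from $I$ onto an arc sends $0$ and $1$ to the two endpoints.

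Proving monotonicity of $g$ is the step I expect to be the main obstacle, and the only one where minimality of $K_f$ is used. I would argue by contradiction: suppose $g^{-1}(q)$ is disconnected for some $q\in Q$, pick two of its components which can be ordered as $C_1$ to the left of $C_3$, and let $v_1:=\max C_1$ and $v_3:=\min C_3$. The first task is to verify that $v_1,v_3\in K_f$: on each connected component of $I\setminus K_f$ (namely each open gap $(s,t)\in\alpha(K_f)$ and each outer piece $[0,\min K_f)$, $(\max K_f,1]$) the map $g$ is constant, so if $v_1$ lay in such a component the closure of that component would lie in $g^{-1}(q)$ and contribute a point strictly above $v_1$ to $C_1$, contradicting maximality. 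Given $v_1,v_3\in K_f$, set $K':=K_f\cap([0,v_1]\cup[v_3,1])$. Then $\min K'=\min K_f$ and $\max K'=\max K_f$, and the unique new gap introduced in passing from $K_f$ to $K'$ is $(v_1,v_3)$, which satisfies $f(v_1)=q=f(v_3)$; hence $K'\in\Lambda(f)\cap\Gamma(f)$. Finally $K_f\cap(v_1,v_3)\neq\emptyset$, for otherwise $(v_1,v_3)$ would already be a gap of $K_f$, forcing $g\equiv q$ on the connected set $[v_1,v_3]$ and merging $C_1$ with $C_3$. Thus $K'\subsetneq K_f$, contradicting minimality, and monotonicity is established.
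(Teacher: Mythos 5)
Your proposal is correct and takes essentially the same route as the paper: the same composition $\Upsilon\circ(\Lambda\cap\Gamma)\circ T$ followed by $f(K_f)$, and the same auxiliary map $g$ collapsing the gaps of $K_f$. Your monotonicity argument is just the contrapositive form of the paper's (the paper first proves via interval-deletion that $f(a)=f(b)$ for $a,b\in K$ forces $K\cap[a,b]=\{a,b\}$, then reads off that the fibers of $g$ are intervals), so the two verifications coincide in substance.
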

\begin{proof}
Let $\mathcal{B}:=\big\lbrace (X,x,y) \in \mathsf{LC}(Q) \times Q \times Q \, ; \ x,y \in X \big\rbrace$ and consider the mapping $T \colon \mathcal{B} \to C(I,Q)$ from Lemma \ref{TransformationOfCtsMappings}. Let $\Lambda \colon C(I,Q) \to \mathcal{K}\big( \mathcal{K}(I) \big)$ and $\Gamma \colon C(I,Q) \to \mathcal{K}\big( \mathcal{K}(I) \big)$ be the mappings from Lemma \ref{AssigningTheFamilyOfAdmissibleCompactaIsBorel1} and Lemma \ref{AssigningTheFamilyOfAdmissibleCompactaIsBorel2} respectively. Clearly, for every $f \in C(I,Q)$, we have $I \in \Lambda(f) \cap \Gamma (f)$ and, therefore, $\Lambda(f) \cap \Gamma (f) \neq \emptyset$. Hence, we can define a mapping $\Omega \colon C(I,Q) \to \mathcal{K}\big( \mathcal{K}(I) \big)$ by $\Omega(f)=\Lambda(f) \cap \Gamma (f)$. By Lemma \ref{IntersectionOfBorelMapsToHyperspaceIsBorel}, $\Omega$ is Borel measurable. Let $\Upsilon \colon \mathcal{K} \big( \mathcal{K} (I) \big) \to \mathcal{K} (I)$ be the mapping provided by Lemma \ref{BorelChoiceOfMinimalElementInclusion}. Finally, by Lemma \ref{CtsFunctionAndCompactSetMapstoImageIsCts}, the mapping $\Delta \colon C(I,Q) \times \mathcal{K}(I) \to \mathcal{K}(Q)$ given by $\Delta (f,K)=f(K)$ is continuous. Define the desired mapping $A \colon \mathcal{D} \to \mathcal{K}(Q)$ by
\[ A(X,x,y) = \Delta \big( T(X,x,y), \, (\Upsilon \circ \Omega \circ T)(X,x,y) \big). \]
Clearly, $A$ is Borel measurable. It remains to show that $A(X,x,y)$ is an arc in $X$ with endpoints $x,y$, whenever $(X,x,y) \in \mathcal{D}$. At this point, we could simply refer the reader to the corresponding part of the proof of \cite[8.23]{Nadler}, but, for the sake of completeness, let us present the proof here. Let $(X,x,y) \in \mathcal{D}$ be given and denote $f:=T(X,x,y)$, $\mathcal{L}:=\Omega (f)$, $K:=\Upsilon (\mathcal{L})$. Then $f(0)=x$, $f(1)=y$, $f(I)=X$,
\[ \mathcal{L}= \big\lbrace  L \in \mathcal{K}(I) \, ; \ \forall \, (s,t) \in \alpha(L) : f(s)=f(t) \, , \ f(\min L)=f(0) \, , \ f(\max L)=f(1) \big\rbrace , \]
$K \in \mathcal{L}$ is a minimal element of $\mathcal{L}$ with respect to set inclusion and $A(X,x,y)=\Delta(f,K)=f(K)$.
\begin{claim}\label{Claim1InTheProofOfTheArcTheorem}
For any two points $a,b \in K$ with $a \leq b$ and $f(a)=f(b)$, we have $K \cap [a,b] = \{ a,b \}$.
\end{claim}
\begin{claimproof}
Let $a,b \in K$ be arbitrary and assume that $a<b$ and $f(a)=f(b)$. The set $K_0 := K \setminus (a,b)$ satisfies $K_0 \cap [a,b] = \{ a,b \}$. Hence, it suffices to show that $K_0 = K$. Clearly, $K_0$ is a nonempty compact subset of $I$. Also, $\min K_0 = \min K$, $\max K_0 = \max K$ and, therefore (since $K \in \mathcal{L}$), we have $f(\min K_0)=f(0)$, $f(\max K_0)=f(1)$. Now, given $(s,t) \in \alpha(K_0)$, we would like to verify that $f(s)=f(t)$. This equality is obvious if $(s,t)=(a,b)$. Hence, assume that $(s,t) \neq (a,b)$. Then, as $K_0 \cap [a,b] = \{ a,b \}$, we have either $s<t \leq a$, or $b \leq s < t$. In both cases it immediately follows from the definition of $K_0$ that $(s,t) \in \alpha(K)$ and, thus, $f(s)=f(t)$. We have just shown that $K_0 \in \mathcal{L}$. Therefore, since $K_0 \subseteq K$ and $K$ is a minimal element of $\mathcal{L}$ with respect to inclusion, it follows that $K_0=K$. \claimend
\end{claimproof}
Note that for every $u \in [\min K, \max K] \setminus K$, there is exactly one pair $(s_u,t_u) \in \alpha(K)$ satisfying $s_u<u<t_u$. Define a mapping $g \colon I \to Q$ by
\begin{equation*}
    g(u) = \begin{cases}
    f(u) & \textup{if } u \in K \\
    f(0) & \textup{if } 0 \leq u < \min K \\
    f(1) & \textup{if } \max K < u \leq 1 \\
    f(s_u) & \textup{if } u \in [\min K, \max K] \setminus K.
\end{cases}
\end{equation*}
It is fairly easy to prove that $g$ is continuous. Also, since it is clear that $g(I)=g(K)=f(K)=A(X,x,y)$ and $f(K) \subseteq f(I) = X$, it suffices to show that $g(I)$ is an arc with endpoints $x,y$.
\begin{claim}\label{Claim2InTheProofOfTheArcTheorem}
The mapping $g$ is monotone.
\end{claim}
\begin{claimproof}
Since $g$ is continuous, it suffices to show that $g^{-1} \big( \{ z \} \big)$ is connected for every $z \in g(I)$. Let $z \in g(I)$ be arbitrary and let $M:= K \cap g^{-1} \big( \{ z \} \big)$. As $g(I)=g(K)$ and $g$ is continuous, $M$ is nonempty and compact. Define $a:=\min M$, $b := \max M$. Clearly, $f(a)=g(a)=z=g(b)=f(b)$. Thus, by Claim \ref{Claim1InTheProofOfTheArcTheorem}, we have $K \cap [a,b] = \{ a,b \}$. It follows from the definition of $g$ that $[\min K, \max K] \cap g^{-1} \big( \{ z \} \big) = [a,b]$. Now it is easy to see that $g^{-1} \big( \{ z \} \big)$ is equal to one of the intervals $[a,b]$, $[0,b]$, $[a,1]$. \claimend
\end{claimproof}
Since $g$ is monotone and $g(0)=f(0)=x \neq y=f(1)=g(1)$, it follows from Lemma \ref{MonotoneImageOfIntervalIsArc} that $g(I)$ is an arc. Finally, to show that $x$ and $y$ are the endpoints of the arc $g(I)$, let $a:= \max g^{-1} \big( \{ x \} \big)$ and $b:= \min g^{-1} \big( \{ y \} \big)$. Then $g^{-1} \big( \{ x \} \big) = [0,a]$, $g^{-1} \big( \{ y \} \big) = [b,1]$ and $a<b$, which shows that $g \big( (a,b) \big) = g(I) \setminus \{ x,y \}$. Hence, $g(I) \setminus \{ x,y \}$ is a connected set. This clearly implies that $x$ and $y$ are the endpoints of the arc $g(I)$.
\end{proof}

\bibliographystyle{alpha}
\bibliography{citace}
\end{document}